\newcommand{\al}{\alpha}
\newcommand{\ga}{\gamma}
\newcommand{\la}{\lambda}
\newcommand{\de}{\delta}
\newcommand{\eps}{\varepsilon}
\newcommand{\bx}{\bar x}
\newcommand{\by}{\bar y}
\newcommand{\bp}{\bar p}
\newcommand{\iv}{^{-1} }
\newcommand {\R} {\mathbb R}
\newcommand {\N} {\mathbb N}
\newcommand {\B} {\mathbb B}
\newcommand {\gph} {{\textrm{gph}}\,}
\newcommand {\dom} {{\textrm{dom}}\,}
\newcommand {\epi} {{\textrm{epi}}\,}
\newcommand {\cl} {{\textrm{cl}}\,}
\newcommand {\sd} {\partial}
\newcommand{\folgt}{$ \Rightarrow\ $}
\newcommand{\ds}{\displaystyle}
\def\nbh{neighbourhood}
\def\es{\emptyset}
\def\LHS{left-hand side}
\def\RHS{right-hand side}
\def\SVM{set-valued mapping}
\def\EVP{Ekeland variational principle}
\def\Fr{Fr\'echet}
\newcommand{\ang}[1]{\left\langle #1 \right\rangle}
\newcommand{\qdtx}[1]{\quad\mbox{#1}\quad}
\newcommand{\AND}{\quad\mbox{and}\quad}
\newcounter{mycount}
\newcommand{\AK}[1]{\todo[inline]{AK {#1}}}
\title{Uniform regularity of set-valued mappings and stability of implicit multifunctions}
\shorttitle{Uniform regularity of set-valued mappings}
\author{Nguyen Duy Cuong 
    \thanks{
    Centre for Informatics and Applied Optimization, School of Engineering, IT and Physical Sciences, Federation University, POB 663, Ballarat, Vic, 3350, Australia;
{Department of Mathematics, College of Natural Sciences, Can Tho University, Vietnam}
(\email{duynguyen@students.federation.edu.au}, \email{ndcuong@ctu.edu.vn})}
\and
Alexander Y. Kruger%
    \thanks{
    Centre for Informatics and Applied Optimization, School of Engineering, IT and Physical Sciences, Federation University, POB 663, Ballarat, Vic, 3350, Australia
(\email{a.kruger@federation.edu.au}, \orcid{0000-0002-7861-7380})}
}
\begin{document}

\maketitle

\begin{abstract}
We propose a
unifying general
(i.e. not assuming the mapping to have any particular structure) view on the theory of regularity and clarify the relationships between the existing primal and dual quantitative sufficient and necessary conditions including their hierarchy.
We expose the typical sequence of regularity assertions, often hidden in the proofs, and
the roles of the assumptions involved in the assertions, in particular, on the underlying space: general metric, normed, Banach or Asplund.
As a consequence, we formulate primal and dual conditions for the
stability properties of solution mappings to inclusions.
\end{abstract}

\section{Introduction}\label{sect1}

Many important problems in variational analysis and optimization can be modelled by an {inclusion}
$y\in F(x)$, where $F$ is a \SVM.
The behavior of the solution set $F^{-1}(y)$ when $y$ and/or $F$ are perturbed is of special interest.
The concepts of \emph{metric regularity} and \emph{subregularity} (cf., e.g., \cite{DonRoc14,Mor06.2,Iof17}) have been the key tools when studying stability of solutions.
In the next definition, we use the names \emph{$\al-$regularity} and \emph{$\al-$subregularity}, fixing the main quantitative parameter in the conventional definitions of the properties.

\begin{definition}\label{D1.1}
Let $X$ and $Y$ be metric spaces, $F:X\rightrightarrows Y$, $(\bx,\by)\in \gph F$, and $\al>0$.
The mapping $F$ is
\begin{enumerate}
\item
$\al-$regular at $(\bx,\by)$ if there exist $\delta\in]0,+\infty]$ and $\mu\in]0,+\infty]$ such that
\begin{align}\label{D1.1-1}
\al d(x,F\iv(y))\le d(y,F(x))
\end{align}
for all $x\in B_\de(\bx)$ and $y\in B_\de(\by)$ with $d(y,F(x))<\al\mu$;

\item
$\al-$subregular at $(\bx,\by)$ if there exist $\delta\in]0,+\infty]$ and $\mu\in]0,+\infty]$ such that
\begin{align}\label{D1.1-2}
\al d(x,F\iv(\by))\le d(\by,F(x))
\end{align}
for all $x\in B_\de(\bx)$ with $d(\by,F(x))<\al\mu$.
\end{enumerate}
\end{definition}

In the above definition and throughout the paper, $B_\de(\bx)$ and $B_\de(\by)$ stand for open balls with radius $\de$ around respective points in appropriate spaces.
Note that $\de$ and $\mu$
can take infinite values; thus, the definition covers local as well as global properties.
This remark applies also to the subsequent definitions.
The technical conditions $d(y,F(x))<\al\mu$ and $d(\by,F(x))<\al\mu$ can be dropped (cf. \cite{Iof00,Mor06.1}), particularly because the value $\mu=+\infty$ is allowed.
This does not affect the properties themselves, but can have an effect on the value of $\de$.

Inequalities \eqref{D1.1-1} and \eqref{D1.1-2} provide linear estimates of the distance from $x$ to the solution set of the respective inclusion via the `residual' $d(y,F(x))$ or $d(\by,F(x))$.
As commented by Dontchev and Rockafellar \cite[p.178]{DonRoc14} `in applications, the residual is typically easy to compute or estimate, whereas finding a solution might be considerably more difficult'.

Besides their importance in studying stability of solutions to inclusions, regularity type estimates are involved in constraint qualifications for optimization problems, qualification conditions in subdifferential and coderivative calculus, and convergence analysis of computational algorithms \cite{Chi10,AraDonGeo07,DonVel09,AraDonGeoVel11,AdlCibNga15, AspChaLuk16,HesLuk13,LukThaTam18,CibPreRou19}.

The name `metric regularity' was coined by Borwein in 1986 \cite{Bor86}, but the concept itself
can be traced back to the Banach--Schauder open mapping theorem for linear operators, and its nonlinear extensions due to Lyusternik \& Graves \cite{Lyu34,Gra50} and Robinson \& Ursescu \cite{Rob76,Urs75}; see, for instance, \cite{Don96,DonRoc14,DonLewRoc03,RocWet98,Mor06.1,Iof16}
for historical comments.
Unlike the `full' regularity in part (i) of \cref{D1.1}, the weaker subregularity property in part (ii) (as well as closely related to it properties like \emph{calmness, error bounds} and \emph{weak sharp minima}) is not stable under small perturbations of the data.
It has also been well studied; see, for instance, \cite{BorZhu88,DonLewRoc03, Mor06.1,YenYaoKie08,LiMor12,ApeDurStr13,DonRoc14, Kru15,ZheNg10}.
Fortunately, the subregularity property is satisfied automatically in finite dimensions when the graph of $F$ is the union of finitely many polyhedral convex sets; cf. \cite{DonRoc14,Iof17}.
\if{
\NDC{20.12.20
Could you please have a look at \cite[Theorem~7.29]{Iof16.2}? I think the theorem states that the convexity assumption can be dropped.}
}\fi

When $y$ is not fixed and can be any point in a \nbh\ of a given point $\by$, it represents \emph{canonical perturbations} of the inclusion $\by\in F(x)$.
For some applications it can be important to allow also perturbations in the \RHS.
This leads to the need to consider parametric inclusions $\by\in F(p,x)$ (or even $y\in F(p,x)$, thus, combining the two types of perturbations), where $F$ is a \SVM\ of two variables, with (nonlinear) perturbations in the \RHS\ given by a parameter $p$ from some fixed set $P$.

Along with the mapping $F:P\times X\rightrightarrows Y$, which is our main object in this paper, given a point $p\in P$, we consider the mapping $F_p:=F(p,\cdot):X\rightrightarrows Y$.
Given a $y\in Y$, the mapping
\begin{align}\label{G}
p\mapsto G(p):=F_p\iv(y)=\{x\in X\mid y\in F(p,x)\}
\end{align}
can be interpreted as an \textit{implicit multifunction} corresponding to the parametric {inclusion} $y\in F(p,x)$.
When studying implicit multifunctions, it is common to consider
`uniform' versions of the properties in \cref{D1.1} (cf., e.g., \cite[Definition~3.1]{Iof17.1}).

\begin{definition}\label{D1.2}
Let $X$ and $Y$ be metric spaces, and $P$ be a set, $F:P\times X\rightrightarrows Y$, $\bx\in X$, $\by\in Y$, and $\al>0$.
The mapping $F$ is
\begin{enumerate}
\item
$\al-$regular in $x$ uniformly in $p$ over $P$ at $(\bx,\by)$ if there exist $\delta\in]0,+\infty]$ and $\mu\in]0,+\infty]$ such that
\begin{align}\label{D1.2-1}
\al d(x,F_p\iv(y))\le d(y,F(p,x))
\end{align}
for all $p\in P$, $x\in B_\de(\bx)$ and $y\in B_\de(\by)$ with $d(y,F(p,x))<\al\mu$;

\item
$\al-$subregular in $x$ uniformly in $p$ over $P$ at $(\bx,\by)$ if there exist $\delta\in]0,+\infty]$ and $\mu\in]0,+\infty]$ such that
\begin{align}\label{D1.2-2}
\al d(x,F_p\iv(\by))\le d(\by,F(p,x))
\end{align}
for all $p\in P$ {and} $x\in B_\de(\bx)$ with ${d(\by,F(p,x))}<\al\mu$.
\end{enumerate}
\end{definition}

If $P$ is a singleton, then the properties in \cref{D1.2} reduce to the corresponding conventional regularity properties in \cref{D1.1}.
Moreover, the subregularity property in \cref{D1.2}(ii) coincides in this case with the subregularity property of the mapping \eqref{G} considered in \cite{ChuKim16}.

\begin{remark}\label{R1.1}
\begin{enumerate}
\item
If $Y$ is a linear metric space with a shift-invariant metric, in particular, a normed space, then the property in part (i) of \cref{D1.2} reduces to the one in part (ii) with the extended parameter set $\widehat{P}:=P\times Y$ and \SVM\ $\widehat{F}((p,y),x):={F(p,x)-y}$, $((p,y),x)\in\widehat{P}\times X$, in place of $P$ and~$F$, respectively.
Moreover, in both parts of the definition, it is sufficient to consider the case $\by:=0$: the general case reduces to it by replacing $F$ with $F-\by$.
\item
Unlike \cref{D1.1}, in \cref{D1.2}
the reference point $(\bx,\by)$
is not associated with the graph of $F$.
This is a technical relaxation caused by the fact that $\gph F$ is a subset of a product of three spaces $P\times X\times Y$, and at this stage there is no reference point in $P$.
\cref{D1.3} below is formulated in a more conventional way.
\item
There exist other concepts of uniform regularity in the literature.
For instance, it is not uncommon to talk about uniform regularity when inequality \eqref{D1.1-1} holds for all $(\bx,\by)$ in a compact subset of $X\times Y$ with the same parameters $\al$, $\de$ and $\mu$; cf. \cite{CibPreRou19}.
\end{enumerate}	
\end{remark}

\if{
\AK{28/12/19.
I've never heard about regularity of a mapping at a point in its image.
Have you?
Can you think of a better name?

Zheng and Ng need to be checked again.
I think they used to write about regularity of inclusions instead of mappings.
This could be exactly our case.}
\NDC{2.1.20
Me too.
I think \cref{D1.2} can be rewritten
``The mapping $F_p$ is subregular at $\bx$ uniformly in $P$ if there exist exist $\delta\in]0,+\infty]$ and $\mu\in]0,+\infty]$ such that
$\al d(x,F_p\iv(\by))\le d(\by,F(p,x))$
for all $p\in P$ and $x\in B_\de(\bx)$ with $d(\by,F(p,x))<\al\mu$''.
In this case, the mapping $G$ does not seem to play any role.
What do you think about it?
Yes, in \cite{ZheNg10}, also in \cite{Ngh14}, they called \textit{metric subregularity of generalized equations}, but they studied the conventional metric (sub-)regularity.}
}\fi

Local (in $p$) versions of the properties in \cref{D1.2} are of special interest.
They correspond to $P$ being a \nbh\ of a point $\bp$ in some metric spaces; cf., e.g., \cite{NgaTroThe13,Iof17.1}.

\begin{definition}\label{D1.3}
Let $P$, $X$ and $Y$ be metric spaces, $F:P\times X\rightrightarrows Y$, $(\bp,\bx,\by)\in\gph F$, and $\al>0$.
The mapping $F$ is
\begin{enumerate}
\item
$\al-$regular in $x$ uniformly in $p$ at $(\bp,\bx,\by)$ if there exist $\eta\in]0,+\infty]$, $\delta\in]0,+\infty]$ and $\mu\in]0,+\infty]$ such that inequality \eqref{D1.2-1} is satisfied
for all $p\in B_\eta(\bp)$, $x\in B_\de(\bx)$ and $y\in B_\de(\by)$ with $d(y,F(p,x))<\al\mu$;

\item
$\al-$subregular in $x$ uniformly in $p$ at $(\bp,\bx,\by)$ if there exist $\eta\in]0,+\infty]$, $\delta\in]0,+\infty]$ and $\mu\in]0,+\infty]$ such that
inequality \eqref{D1.2-2} is satisfied
for all $p\in B_\eta(\bp)$ and $x\in B_\de(\bx)$ with ${d(\by,F(p,x))}<\al\mu$.
\end{enumerate}
\end{definition}

We often simply say that $F$ is regular or subregular if the exact value of $\al$ in the above definitions is not important.
The exact upper bound of all $\al>0$ such that a property in the above definitions is satisfied with some $\delta\in]0,+\infty]$ and $\mu\in]0,+\infty]$ (and $\eta\in]0,+\infty]$), is called the \emph{modulus} (or rate) of the property.

Apart from the main parameter $\al$, providing a quantitative measure of the respective property,
the properties in above definitions depend also on the auxiliary parameters $\de$, $\eta$ and $\mu$.
They control (directly and indirectly) the size of the \nbh s of $\bx$ and $\bp$ involved in the definitions.
As discussed above, the last parameter can be dropped (together with the corresponding constraints).
We keep all the parameters to emphasize their different roles in the definitions and corresponding characterizations.
The necessary and sufficient regularity conditions presented in the paper normally involve the same collection of parameters.

The properties in \cref{D1.2,D1.3} can be interpreted as kinds of Lipschitz-like properties of the implicit multifunction (solution mapping) \eqref{G}.
This observation opens a way for numerous applications of the characterizations established in this and many other papers; cf. \cref{S6}.

\if{
\AK{26/12/19.
In part (ii) there should be two different $\de$'s or, more generally, two different sets instead of \nbh s.
It seems $P$ does not have to be even a metric space.}

\NDC{27/12/19
Yes, it can be two different $\de$.
Should we do it?}

\AK{26/12/19.
I think, with the above definition, \cref{P2,P1.1} are OK.
What about the rest of the paper?
Is subregularity needed?
$B_\de(\bx)$ can also be replaced by an arbitrary set.
Ioffe (and possibly others) used to write about regularity relative to a set.
There can be issues with \cref{P1}, where $\de+\mu$ is used.}
}\fi
\if{
\NDC{27/12/19
Subregularity does not needed.
I formulated some corollaries for the property just for completeness.
I have changed the rest of the paper according to the property in \cref{D1.2}.
Everything seems OK.
I think there was no issue with \cref{P1} since $\de+\mu$ was related to $x$ and $y$.}
}\fi
\if{
\NDC{6.12.19
I do not know how to define Robinson semiregularity of implicit multifunctions.}
}\fi

Regularity properties of implicit multifunctions were first considered by Robinson \cite{Rob75.2,Rob76,Rob76.2} when studying stability of solution sets of generalized equations.
This initiated a great deal of research by many authors, mostly in normed spaces (and with $\by:=0$).
Dontchev et al. \cite[Theorem~2.1] {DonQuiZla06} gave a
sufficient condition for regularity of implicit multifunctions in terms of graphical derivatives.
Ngai et al. \cite{NgaThe04,NgaTroThe13} employed the theory of error bounds to characterizing the property in
metric and Banach spaces.
In \cite{LedZhu99,LeeTamYen08,YenYao09,HuyYao09,ChuKruYao11, HuyKimNin12, Ngh14, ChuKim16,GfrOut16.2}
dual sufficient conditions were established
in finite and infinite dimensions in terms of Fr\'echet, limiting, directional limiting and Clarke coderivatives.
Chieu et al. \cite{ChiYaoYen10}
established connections between regularity and  Lipschitz-like properties of implicit multifunctions.

The regularity properties of the type given in \cref{D1.2,D1.3} are often referred to in the literature as
\textit{metric regularity}
\cite{ChuKim16,HuyYao09,LeeTamYen08},
\emph{metric regularity in Robinson's sense} \cite{YenYao09,Ngh14}, and \textit{Robinson metric regularity}
\cite{ChiYaoYen10,HuyKimNin12} (of implicit multifunctions).
Following Ioffe \cite{Iof17,Iof17.1}, we prefer to talk about
\textit{uniform regularity}.
We refer the readers to \cite{LedZhu99,AzeCorLuc02,AzeBen08,Ngh14,Iof17.1,DonRoc14} for more discussions and historical comments.

The metric properties in \cref{D1.3} admit equivalent geometric characterizations.
This is illustrated by the next proposition providing a characterization for the property in \cref{D1.2}(ii).

\begin{proposition}
Let $X$ and $Y$ be metric spaces, and $P$ be a set, $F:P\times X\rightrightarrows Y$, $\bx\in X$, $\by\in Y$, and $\al>0$.
The mapping $F$ is
$\al-$subregular in $x$ uniformly in $p$ over $P$ at $(\bx,\by)$ with some $\delta\in]0,+\infty]$ and $\mu\in]0,+\infty]$ if and only if
\begin{align}\label{P2-1}
F_p\iv(\by)\cap B_\rho(x)\ne \es
\end{align}
for all $\rho\in]0,\mu[$, $p\in P$ and $x\in B_\de(\bx)$ with $d(\by,F(p,x))<\al\rho$.
\end{proposition}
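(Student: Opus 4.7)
The plan is to prove the equivalence by directly unwinding the definitions on both sides and relating the inequality $\al d(x,F_p\iv(\by))\le d(\by,F(p,x))$ to a statement about the existence of nearby preimages, using the fact that $d(x,F_p\iv(\by))<\rho$ is equivalent to $F_p\iv(\by)\cap B_\rho(x)\ne\es$.

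For the forward implication, I would assume $F$ is $\al$-subregular in $x$ uniformly in $p$ over $P$ at $(\bx,\by)$ with the given $\de$ and $\mu$. Fix $\rho\in]0,\mu[$, $p\in P$ and $x\in B_\de(\bx)$ with $d(\by,F(p,x))<\al\rho$. Since $\al\rho<\al\mu$, inequality \eqref{D1.1-2} applies and yields
\begin{align*}
\al d(x,F_p\iv(\by))\le d(\by,F(p,x))<\al\rho,
\end{align*}
so $d(x,F_p\iv(\by))<\rho$, which by the definition of the distance function produces a point of $F_p\iv(\by)$ inside $B_\rho(x)$, giving \eqref{P2-1}.

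For the reverse implication, assume \eqref{P2-1} holds for all admissible $\rho,p,x$. Pick $p\in P$ and $x\in B_\de(\bx)$ with $d(\by,F(p,x))<\al\mu$, and set $r:=d(\by,F(p,x))/\al<\mu$. For every $\rho\in]r,\mu[$ we have $d(\by,F(p,x))=\al r<\al\rho$, so the hypothesis gives $F_p\iv(\by)\cap B_\rho(x)\ne\es$, hence $d(x,F_p\iv(\by))<\rho$. Letting $\rho\downarrow r$ yields $\al d(x,F_p\iv(\by))\le\al r=d(\by,F(p,x))$, which is \eqref{D1.1-2}. The boundary case $d(\by,F(p,x))=0$ is handled the same way by letting $\rho\downarrow 0$.

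I do not expect any serious obstacle: the argument is essentially bookkeeping with the definition of $d(x,\cdot)$ as an infimum, and the only mild subtlety is the passage to the limit $\rho\downarrow r$ (and the trivial case $r=0$) to recover a non-strict inequality from a family of strict ones.
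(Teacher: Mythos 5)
Your proposal is correct and follows essentially the same route as the paper's own proof: the forward direction applies \eqref{D1.2-2} directly to get $d(x,F_p\iv(\by))<\rho$, and the converse chooses $\rho$ strictly between $\al\iv d(\by,F(p,x))$ and $\mu$ and lets $\rho$ decrease to recover the non-strict inequality. The only cosmetic difference is that you single out the case $d(\by,F(p,x))=0$, which the paper's limiting argument already covers.
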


\begin{proof}
Suppose $F$ is
$\al-$subregular in $x$ uniformly in $p$ over $P$ at $(\bx,\by)$ with some ${\delta\in]0,+\infty]}$ and $\mu\in]0,+\infty]$.
Let ${\rho\in]0,\mu[}$, $p\in P$ and $x\in B_\de(\bx)$ with $d(\by,F(p,x))<\al\rho$.
Then $d(\by,F(p,x))<\al\mu$.
By \cref{D1.2}(ii), $d(x,F_p\iv(\by))\le\al\iv d(\by,F(p,x))<\rho$.
Hence, condition \eqref{P2-1} is satisfied.
\sloppy

Conversely, suppose $\delta\in]0,+\infty]$ and $\mu\in]0,+\infty]$, and condition \eqref{P2-1} is satisfied for all  ${\rho\in]0,\mu[}$, $p\in P$ and $x\in B_\de(\bx)$ with $d(\by,F(p,x))<\al\rho$.
Let $p\in P$ and $x\in B_\de(\bx)$ with $d(\by,F(p,x))<\al\mu$.
Choose a $\rho$ satisfying $\al\iv d(\by,F(p,x))<\rho<\mu$.
Then, by \eqref{P2-1}, $d(x,F_p\iv(\by))<\rho$.
Letting $\rho\downarrow \al\iv d(\by,F(p,x))$, we arrive at \eqref{D1.2-2}, i.e. $F$ is
$\al-$subregular in $x$ uniformly in $p$ over $P$ at $(\bx,\by)$ with $\de$ and $\mu$.
\sloppy
\end{proof}

The aim of this paper is not to add some new sufficient or necessary conditions for regularity properties of general \SVM s or implicit multifunctions to the large volume of existing ones (although some conditions in the subsequent sections are indeed new), but to propose a
unifying general
(i.e. not assuming the mapping $F$ to have any particular structure and not using tangential approximations of $\gph F$) view on the theory of regularity, and clarify the relationships between the existing conditions including their hierarchy.
We expose the typical sequence of regularity assertions, often hidden in the proofs, and
the roles of the assumptions involved in the assertions, in particular, on the underlying space: general metric, normed, Banach or Asplund.

We present a series of necessary and sufficient regularity conditions with the main emphasis (in line with the current trend in the literature) on the latter ones.
The (typical) sequence of sufficient regularity conditions is represented by the following chain of assertions, each subsequent assertion being a consequence of the previous one:
\begin{enumerate}
\item
nonlocal primal space
conditions in complete metric spaces (\cref{P1}(ii));
\item
local primal space
conditions in complete metric spaces (\cref{C2.2}(ii));
\item
subdifferential conditions in Banach and Asplund spaces (\cref{P5});
\item
normal cone conditions in Banach and Asplund spaces (\cref{T2});
\item
coderivative conditions in Banach and Asplund spaces (\cref{C3.3,C3.4}).
\end{enumerate}
Even if one targets coderivative conditions, they still have to go through the five steps listed above with details often hidden in long proofs.
Apart from making the whole process more transparent, which is our main objective, the assertions in (i)--(iv) can be of independent interest, at least theoretically, especially
the slope type conditions in (ii) and normal cone conditions in (iv).
In combination with tangential approximations of $\gph F$, they are likely to lead to verifiable regularity conditions.

The implications (i) \folgt (ii) and (iv) \folgt (v) in the above list follow immediately from the definitions.
The main assertions are the sufficiency of condition (i), and implications (ii) \folgt (iii) \folgt (iv).
They employ the following fundamental tools of variational analysis:
\begin{itemize}
\item
\emph{\EVP} (sufficiency of condition (i));
\item
\emph{sum rules} for respective subdifferentials (implications (ii) \folgt (iii) \folgt (iv)).
\end{itemize}
Thus, all the sufficient conditions on the list are consequences of the \EVP, and as such, they are `outer' conditions, i.e. they need to be checked at points outside the solution set $F_p\iv(\by)$.

Most of the sufficient conditions are accompanied by the corresponding necessary ones.
The necessary conditions do not require the underlying spaces to be complete and are generally easy consequences of the definitions.
With the exception of the general nonlocal condition in \cref{P1}(i), such conditions are formulated in normed spaces and assume the graph of $F$ to be convex.
In \cref{S4.2}, we provide a series of dual necessary regularity conditions for \SVM s with closed convex graphs acting between Banach spaces some of which are also sufficient.

In the setting of complete metric spaces, and assuming that $\gph F_p$ is closed for all $p\in P$,
the gap between the nonlocal necessary and sufficient subregularity conditions in \cref{P1} is not big:
they share the same inequality \eqref{P1-1}; with all the other parameters coinciding, the sufficiency part naturally requires it to hold for all $x$ in a larger set.
Unfortunately, unlike the `full' regularity possessing the well known coderivative criterion (see, e.g., \cite{Kru88,Mor06.1}), this is not the case in general with local subregularity conditions unless the graph of $F$ is convex.
The sufficient subregularity conditions presented in the paper are the weakest possible in each group, but can still be far from necessary.
As it has been discussed in the literature (see, e.g., a discussion of the equivalent subtransversality property in \cite{KruLukTha17}), the reason for this phenomenon lies in the fact that the subregularity property lacks robustness.

The hot topic of regularity of a \SVM\ $F$ with a special structure, particularly in the arising in numerous applications such as, e.g., KKT systems and variational inequalities, case when $F=g+G$ with $g$ single valued and $G$ set-valued (typically a normal cone mapping), is outside the scope of the current paper.
Computing `slopes' and coderivatives of such mappings (or normal cones to their graphs) is usually a difficult job and requires imposing additional assumptions on $g$ and $G$.
This is what people working in this area normally do.
We want to emphasize that this type of conditions still fall into the five-point scheme described above.

The rest of the paper is organized as follows.
The next \cref{sect1.2} contains some preliminary facts used throughout the paper.
\cref{S4,S5} are dedicated, respectively, to primal and dual sufficient and necessary conditions for the regularity properties.
In \cref{S6},
we illustrate the theory by characterizing
the conventional metric regularity and subregularity of set-valued mappings as well
as stability properties of solution mappings to parametric inclusions.


\section{Preliminaries}\label{sect1.2}

Our basic notation is standard, see, e.g., \cite{Mor06.1,RocWet98,DonRoc14}.
Throughout the paper, if not explicitly stated otherwise, $P$ is an arbitrary set, $X$ and $Y$ are either metric or normed/Ba\-nach/Asplund spaces.
Products of metric or normed spaces are assumed to be equipped with the maximum distance or norm.
The topological dual of a normed space $X$ is denoted by $X^*$, while $\langle\cdot,\cdot\rangle$ denotes the bilinear form defining the pairing between the two spaces.
In a primal space, the open and closed balls with center $x$ and radius $\delta\in]0,+\infty]$ are denoted, respectively, by $B_\delta(x)$ and $\overline{B}_\de(x)$, while $\B$ and $\overline{\B}$ stand for, respectively, the open and closed unit balls.
The open unit ball in the dual space is denoted by $\B^*$.
Symbols $\R$, $\R_+$ and $\N$ stand for the real line, the set of all nonnegative reals, and the set of all positive integers, respectively.

For a set $\Omega$ in a normed space, its closure is denoted by $\cl\Omega$.
The distance from a point $x$ to $\Omega$ is defined by $d(x,\Omega):=\inf_{u\in\Omega}\|u-x\|$, and we use the convention $d(x,\emptyset)=+\infty$.
The indicator function of $\Omega$ is defined by $i_\Omega(x)=0$ if $x\in\Omega$, and $i_\Omega(x)=+\infty$ if $x\notin\Omega$.

The dual conditions in the paper are formulated in terms of \Fr\ and Clarke normals and subdifferentials; cf., e.g., \cite{Kru03,Cla83}.

Given a point $\bx\in \Omega$, the sets
\begin{gather}\label{NC}
N_{\Omega}^F(\bx):= \left\{x^\ast\in X^\ast\mid
\limsup_{\Omega\ni x\to\bar x,\,x\ne \bx} \frac {\langle x^\ast,x-\bx\rangle}
{\|x-\bx\|} \le 0 \right\},
\\\label{NCC}
N_{\Omega}^C(\bx):= \left\{x^\ast\in X^\ast\mid
\ang{x^\ast,z}\le0
\qdtx{for all}
z\in T_{\Omega}^C(\bx)\right\}
\end{gather}
are, respectively, the \emph{Fr\'echet} and \emph{Clarke normal cones} to $\Omega$ at $\bx$.
In definition \eqref{NCC}, $T_{\Omega}^C(\bx)$ stands for the \emph{Clarke tangent cone} to $\Omega$ at $\bx$.
The sets \eqref{NC} and \eqref{NCC} are nonempty
closed convex cones satisfying
$N_{\Omega}^F(\bx)\subset N_{\Omega}^C(\bx)$.
If $\Omega$ is a convex set, both cones reduce to the normal cone in the sense of convex analysis:
\begin{gather*}\label{CNC}
N_{\Omega}(\bx):= \left\{x^*\in X^*\mid \langle x^*,x-\bx \rangle \leq 0 \qdtx{for all} x\in \Omega\right\}.
\end{gather*}

For an extended-real-valued function $f:X\to\R\cup\{+\infty\}$ on a normed space, its domain and epigraph are defined,
respectively, by $\dom f:=\{x\in X\mid f(x)< +\infty\}$ and $\epi f:=\{(x,\alpha)\in X\times \mathbb{R}\mid f(x)\le\alpha\}$.
The \emph{Fr\'echet} and \emph{Clarke subdifferentials} of $f$ at $\bar x\in\dom f$ are defined, respectively, as
\begin{gather}\label{sdF}
\partial^F f(\bar x):=\left\{x^*\in X^*\mid \liminf_{\substack{x\to \bar x,\,x\ne\bx}} \dfrac{f(x)-f(\bar x)-\langle x^*,x-\bar x\rangle}{\|x-\bar x\|}\ge 0\right\},
\\\label{sdC}
\partial^Cf(\bx):=\left\{x^*\in X^*\mid \langle x^*,z\rangle \le f^\circ(\bx,z)
\quad\text{for all}\quad
z\in X\right\},
\end{gather}
where $f^\circ(\bx,z)$ is the \emph{Clarke--Rockafellar directional derivative} \cite{Roc79,Roc80}
of $f$ at $\bx$ in the direction $z\in X$.
The sets \eqref{sdF} and \eqref{sdC} are closed and convex, and satisfy
$\partial^F{f}(\bx)\subset\partial^C{f}(\bx)$.
If $f$ is convex, they
reduce to the subdifferential in the sense of convex analysis:
\begin{gather*}
\partial{f}(\bx):= \left\{x^\ast\in X^\ast\mid
f(x)-f(\bx)-\langle{x}^\ast,x-\bx\rangle\ge 0 \qdtx{for all} x\in X \right\}.
\end{gather*}
It is easy to check that $N_{\Omega}^F(\bx)=\partial^Fi_\Omega(\bx)$, $N_{\Omega}^C(\bx)=\partial^Ci_\Omega(\bx)$, and
\begin{gather*}
\sd^F f(\bx)=\left\{x^*\in X^*\mid (x^*,-1)\in N^F_{\epi f}(\bx,f(\bx))\right\},\\
\partial^C{f}(\bx)= \left\{x^\ast\in X^\ast\mid
(x^*,-1)\in N_{\epi f}^C(\bx,f(\bx))\right\}.
\end{gather*}
By convention, we set $N_{\Omega}^F(\bx) =N_{\Omega}^C(\bx):=\es$ if $\bx\notin \Omega$ and $\partial^F{f}(\bx)=\partial^C{f}(\bx):=\es$ if $x\notin\dom f$.
We often use the generic notations $N$ and $\sd$ for  Fr\'echet and Clarke objects, specifying wherever necessary the type of the object by an appropriate superscript, e.g., $N:=N^F$ or $N:=N^C$.

The following fact is an immediate consequence of the definition of the \Fr\ subdifferential; cf. \cite{Kru03,Mor06.1}.

\begin{lemma}\label{L2.3}
Suppose $X$ is a normed space and $f:X\to\R\cup\{+\infty\}$.
If $\bx\in\dom f$ is a point of local minimum of $f$, then $0\in\sd^Ff(\bx)$.
\end{lemma}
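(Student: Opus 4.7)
The plan is to read the conclusion off directly from definition \eqref{sdF} of the Fr\'echet subdifferential by testing it with the zero functional $x^\ast=0$. There is no need to invoke any auxiliary machinery such as a sum rule, a variational principle, or completeness of $X$; this is a purely definitional observation, recorded in the preliminaries so it can be quoted later.

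First I would unpack the hypothesis: since $\bx\in\dom f$ is a point of local minimum, $f(\bx)$ is finite and there exists $\de\in\,]0,+\infty[$ such that $f(x)\ge f(\bx)$ for every $x\in B_\de(\bx)$. In particular, for each $x\in B_\de(\bx)\setminus\{\bx\}$ the difference quotient $(f(x)-f(\bx))/\|x-\bx\|$ is a well-defined element of $[0,+\infty]$, and hence is nonnegative.

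Next, I would substitute $x^\ast:=0$ into the expression that appears in \eqref{sdF}. The inner product term $\ang{x^\ast,x-\bx}$ vanishes, so the expression under the liminf reduces to the nonnegative quotient above for all $x$ sufficiently close to $\bx$ with $x\ne\bx$. Consequently
\begin{equation*}
\liminf_{x\to\bx,\,x\ne\bx}\frac{f(x)-f(\bx)-\ang{0,x-\bx}}{\|x-\bx\|}=\liminf_{x\to\bx,\,x\ne\bx}\frac{f(x)-f(\bx)}{\|x-\bx\|}\ge 0,
\end{equation*}
which is exactly the membership condition $0\in\sd^F f(\bx)$.

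There is no genuine obstacle here. The only micro-subtlety worth flagging is that the liminf in \eqref{sdF} is taken as $x\to\bx$, so only the behaviour of $f$ on an arbitrarily small punctured neighbourhood of $\bx$ matters; this is precisely what local minimality supplies, and values of $f$ outside $B_\de(\bx)$ are irrelevant to the computation.
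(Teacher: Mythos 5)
Your argument is correct and is exactly the definitional check the paper has in mind: the paper gives no proof of \cref{L2.3}, stating only that it is an immediate consequence of definition \eqref{sdF}, and your substitution of $x^\ast=0$ together with local minimality is precisely that observation.
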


The representation of the (convex) subdifferential of a norm in the next lemma is of importance; cf. \cite[Corollary~2.4.16]{Zal02}.

\begin{lemma} \label{L3}
Let $(Y,\|\cdot\|)$ be a normed space.
Then
\begin{enumerate}
\item
$\sd\|\cdot\|(0)=\{y^*\in Y^*\mid
\|y^*\|\le 1\}$;
\item
$\sd\|\cdot\|(y)=\{y^*\in Y^*\mid \langle y^*,y\rangle=\|y\|\;\;
\text{and}
\;\;
\|y^*\|= 1\},
\;\;
y\ne 0$.
\end{enumerate}
\end{lemma}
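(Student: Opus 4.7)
\smallskip

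The plan is to work directly from the convex-analysis subdifferential definition displayed in the paper, namely
\[
\sd\|\cdot\|(\by)=\{y^*\in Y^*\mid \|y\|-\|\by\|-\langle y^*,y-\by\rangle\ge 0 \text{ for all } y\in Y\},
\]
and translate the defining inequality into the two desired descriptions by careful choice of test points. Throughout I will use that the dual norm is given by $\|y^*\|=\sup_{\|y\|\le 1}\langle y^*,y\rangle$.

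For part (i), with $\by:=0$, the subgradient inequality becomes $\langle y^*,y\rangle\le\|y\|$ for every $y\in Y$. Taking the supremum over $y$ in the unit ball immediately gives $\|y^*\|\le 1$. Conversely, $\|y^*\|\le 1$ yields $\langle y^*,y\rangle\le\|y^*\|\,\|y\|\le\|y\|$ for all $y$, which is exactly the subgradient inequality at $0$. This establishes (i).

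For part (ii), assume $\by\ne 0$ and let $y^*\in\sd\|\cdot\|(\by)$. Testing the subgradient inequality at $y:=0$ gives $-\|\by\|+\langle y^*,\by\rangle\ge 0$, that is $\langle y^*,\by\rangle\ge\|\by\|$; testing at $y:=2\by$ gives $\|\by\|-\langle y^*,\by\rangle\ge 0$. Combining these two inequalities yields $\langle y^*,\by\rangle=\|\by\|$. Rewriting the subgradient inequality with this identity produces $\langle y^*,y\rangle\le\|y\|$ for all $y\in Y$, so $\|y^*\|\le 1$ as in part (i); and from $\|\by\|=\langle y^*,\by\rangle\le\|y^*\|\,\|\by\|$ together with $\by\ne 0$ we get $\|y^*\|\ge 1$, hence $\|y^*\|=1$. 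For the converse inclusion, given $y^*$ with $\|y^*\|=1$ and $\langle y^*,\by\rangle=\|\by\|$, the inequality $\langle y^*,y-\by\rangle=\langle y^*,y\rangle-\|\by\|\le\|y^*\|\,\|y\|-\|\by\|=\|y\|-\|\by\|$ for all $y\in Y$ shows $y^*\in\sd\|\cdot\|(\by)$.

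There is no real obstacle here: the only ``trick'' is selecting the two test points $y=0$ and $y=2\by$ in (ii), which together force the normalization $\langle y^*,\by\rangle=\|\by\|$; everything else is straightforward use of the definition of the dual norm. Since both parts are short, the write-up can be kept to a few lines and handled in a single display, or simply cited as \cite[Corollary~2.4.16]{Zal02} as the authors indicate.
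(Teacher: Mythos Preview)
Your proof is correct and complete. The paper itself does not prove \cref{L3}; it merely records the lemma as a standard fact with the reference \cite[Corollary~2.4.16]{Zal02}. You have supplied a direct argument from the convex subdifferential definition, and every step checks: the test points $y=0$ and $y=2\by$ in part~(ii) pin down $\langle y^*,\by\rangle=\|\by\|$, after which the rest follows from the dual-norm inequality. Since the authors treat the result as background and provide no proof of their own, there is nothing to compare beyond noting that your write-up is self-contained whereas the paper defers to the cited monograph.
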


For an extended real-valued function $f$ on a metric space,
its \emph{slope} and \emph{nonlocal slope} (cf. \cite{NgaThe08,Iof00,AzeCorLuc02,Kru15}) at $x\in\dom f$ are defined, respectively, by
\begin{align*}
|\nabla f|(x):=\limsup_{u\rightarrow x,u\ne x}\dfrac{ [f(x)-f(u)]_+}{d(x,u)} \AND
|\nabla f|^\diamond(x):=\sup\limits_{u\ne x}
\dfrac{[f(x)-f_+(u)]_+}{d(x,u)},
\end{align*}
where $\al_+:=\max\{0,\al\}$ for any $\al\in\R$.
If $x\notin\dom f$, we set ${|\nabla f|(x)=|\nabla f|^\diamond(x):=+\infty}$.
The following simple facts are well known; cf., e.g., \cite{CuoKru21}.

\begin{lemma}\label{P1.3}
Let $X$ be a metric space, $f:X\to\R\cup\{+\infty\}$, $x\in\dom f$,
and ${f(x)>0}$.
\begin{enumerate}
\item
$|\nabla f|(x)\le |\nabla f|^\diamond(x)$.
\item
If $X$ is a normed space and $f$ is convex,
then
$|\nabla f|^\diamond(x)=|\nabla f|(x)=d(0,\partial f(x)).$
\end{enumerate}
\end{lemma}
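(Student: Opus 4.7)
The plan for part (i) is to exploit the hypothesis $f(x)>0$, which is what makes the inequality go in the stated direction despite the pointwise bound $[f(x)-f_+(u)]_+\le[f(x)-f(u)]_+$ (valid because $f_+(u)\ge f(u)$ for every $u$). I would extract a sequence $u_n\to x$, $u_n\ne x$, realising the limsup defining $|\nabla f|(x)$, and split it according to the sign of $f(u_n)$. On the subsequence where $f(u_n)\ge 0$ the two numerators coincide, so the corresponding ratio is trivially dominated by $|\nabla f|^\diamond(x)$. On the subsequence where $f(u_n)<0$ one has $f_+(u_n)=0$, so the nonlocal-slope ratio at $u_n$ equals $f(x)/d(x,u_n)$, which blows up because $d(x,u_n)\to 0$ while $f(x)>0$; hence $|\nabla f|^\diamond(x)=+\infty$ and the bound is automatic in that case. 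Taking the relevant subsequence thus gives $|\nabla f|(x)\le|\nabla f|^\diamond(x)$.

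For part (ii), thanks to (i) it is enough to prove the reverse inequality $|\nabla f|^\diamond(x)\le|\nabla f|(x)$ together with the identity $|\nabla f|(x)=d(0,\partial f(x))$. The first is an application of the classical monotonicity of secant slopes of a convex function: for $u\ne x$ with $f(u)<+\infty$ and $t\in(0,1)$, the point $u_t:=x+t(u-x)$ satisfies $\frac{[f(x)-f(u_t)]_+}{\|u_t-x\|}\ge\frac{[f(x)-f(u)]_+}{\|u-x\|}$ by convexity, and letting $t\downarrow 0$ bounds the left-hand side above by $|\nabla f|(x)$; combined with $[f(x)-f_+(u)]_+\le[f(x)-f(u)]_+$ and taking $\sup_{u\ne x}$, this yields $|\nabla f|^\diamond(x)\le|\nabla f|(x)$.

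The easy direction of the subdifferential identity, $d(0,\partial f(x))\ge|\nabla f|(x)$, comes from the subgradient inequality: any $x^*\in\partial f(x)$ satisfies $\|x^*\|\ge\frac{[f(x)-f(u)]_+}{\|u-x\|}$ for every $u\ne x$, and the supremum of the right-hand side over $u\ne x$ equals $|\nabla f|(x)$ by the argument just used. The reverse inequality is the step I expect to be the main obstacle. Assuming $L:=|\nabla f|(x)<+\infty$, I fix $\eps>0$ and use the definition of slope to obtain a neighbourhood of $x$ on which $f(u)+(L+\eps)\|u-x\|\ge f(x)$. Then the convex function $g:=f+(L+\eps)\|\cdot-x\|$ has $x$ as a local---hence global---minimum, so $0\in\partial g(x)$. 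The Moreau--Rockafellar sum rule (applicable because the norm term is finite and continuous on $X$) combined with \cref{L3}(i) gives $\partial g(x)=\partial f(x)+\{y^*\in X^*\mid\|y^*\|\le L+\eps\}$, yielding an $x^*\in\partial f(x)$ with $\|x^*\|\le L+\eps$; letting $\eps\downarrow 0$ finishes the proof.
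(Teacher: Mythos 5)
Your proof is correct. Note that the paper does not actually prove \cref{P1.3}; it is stated as a well-known fact with a citation, so there is no in-paper argument to compare against. Your argument is the standard one for this result: the sign split on $f(u_n)$ (with the blow-up of $f(x)/d(x,u_n)$ when $f(u_n)<0$ near $x$, which is exactly where the hypothesis $f(x)>0$ is used) handles part (i), and secant-slope monotonicity together with the Moreau--Rockafellar sum rule and \cref{L3}(i) gives part (ii); all steps check out.
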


A set-valued mapping $F:X\rightrightarrows Y$ between two sets $X$ and $Y$ is a mapping, which assigns to every $x\in X$ a subset (possibly empty) $F(x)$ of $Y$.
We use the notations $\gph F:=\{(x,y)\in X\times Y\mid
y\in F(x)\}$ and $\dom\:F:=\{x\in X\mid F(x)\ne\emptyset\}$
for the graph and the domain of $F$, respectively, and $F^{-1}:Y\rightrightarrows X$ for the inverse of $F$.
This inverse (which always exists with possibly empty values at some $y$) is defined by $F^{-1}(y):=\{x\in X \mid y\in F(x)\}$, $y\in Y$. Obviously $\dom F^{-1}=F(X)$.

If $X$ and $Y$ are normed spaces, the \emph{coderivative} of $F$ at $(x,y)\in\gph F$ is a set-valued mapping $D^*F(x,y):Y^*\rightrightarrows X^*$ defined by
\begin{align}\label{coder}
D^*F(x,y)(y^*):=\{x^*\in X^*\mid (x^*,-y^*)\in N_{\gph F}(x,y)\},
\quad
y^*\in Y^*.
\end{align}
Depending on the type of the normal cone in \eqref{coder}, it can define various coderivatives.
We use symbols $D_F^*$ and $D_C^*$ to denote, respectively, the Fr\'echet and Clarke coderivatives.
\if{
Let $\la\ge 0$, and $\varepsilon\ge 0$.
Define the approximate normalized $\lambda-$enlargement of the duality mapping (cf. \cite{ChuKim16})
\todo{$X$ or $Y$?}
$J_\lambda(\cdot,\varepsilon):
X\rightrightarrows X^*$ for any $x\ne 0$
by
\begin{align*}
J_\lambda(x,\varepsilon):=
\Big\{\dfrac{u^*+\lambda v^*}{\|u^*+\lambda v^*\|}\mid z\in X\setminus\{0\},\,v^*\in X^*,\,\|z-x\|\le\varepsilon,\,
u^*\in J(z),\,\|v^*\|\le 1 \Big\}.
\end{align*}
When $\varepsilon=0$, the mapping is denoted by $J_\la(\cdot)$ and called the normalized $\lambda-$enlargement of the duality mapping (cf. \cite{LiMor12}).\sloppy
}\fi

The key tools in the proofs of the main results are
the celebrated Ekeland variational principle and several subdifferential sum rules; cf. \cite{DonRoc14,Mor06.1,Iof17,IofTik79,Zal02,Roc79,Kru03,Fab89}.

\begin{lemma}\label{EVP}
Suppose $X$ is a complete metric space, $f: X\to \mathbb{R} \cup \{ +\infty\}$ is lower semicontinuous,
$x\in X$, $\varepsilon>0$
and $\lambda>0$. If
$f(x)<\inf_{X} f+\varepsilon$,
then there exists an $\hat x\in X$ such that
\begin{enumerate}
\item
$d(\hat{x},x)<\lambda$;
\item
$f(\hat{x})\le f(x)$;
\item
$f(u)+(\varepsilon/\lambda)d(u,\hat{x})\ge f(\hat{x})$ for all $u\in X.$
\end{enumerate}
\end{lemma}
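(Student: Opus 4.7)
The plan is the classical partial-order argument of Ekeland. On $\dom f$, introduce the relation $u \preceq v$ iff $f(u) + (\eps/\la)\, d(u,v) \le f(v)$. Using nonnegativity of $d$ together with the triangle inequality, one checks routinely that $\preceq$ is reflexive, transitive, and antisymmetric (adding $u\preceq v$ and $v\preceq u$ forces $d(u,v)=0$). Observe that $\inf_X f \in \R$, because the hypothesis $f(x) < \inf_X f + \eps$ with $f(x) < +\infty$ and $\eps$ finite forces $\inf_X f > f(x) - \eps > -\infty$; in particular $f$ is bounded below on every nonempty subset of $X$.

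Next I would construct inductively a sequence with $x_0 := x$ and, given $x_n \in \dom f$, pick $x_{n+1}$ in the section $S_n := \{u\in X : u \preceq x_n\}$ satisfying $f(x_{n+1}) \le \inf_{S_n} f + 2^{-n-1}$, which is possible since $x_n \in S_n$ and $\inf_{S_n} f \ge \inf_X f > -\infty$. By transitivity, $x_m \preceq x_n$ for every $m \ge n$, hence $(\eps/\la)\, d(x_m, x_n) \le f(x_n) - f(x_m)$. Since $f(x_n)$ is monotone decreasing and bounded below, it converges, so $(x_n)$ is Cauchy and, by completeness of $X$, converges to some $\hat{x} \in X$. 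Passing to the limit $m\to\infty$ in $f(x_m) + (\eps/\la)\, d(x_m, x_n) \le f(x_n)$ and using lower semicontinuity of $f$ yields $\hat{x} \preceq x_n$ for every $n$, in particular $\hat{x} \preceq x_0 = x$.

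The main step is to verify that $\hat{x}$ is minimal with respect to $\preceq$, which is exactly property (iii). Suppose for contradiction that some $u \neq \hat{x}$ satisfies $f(u) + (\eps/\la)\, d(u,\hat{x}) < f(\hat{x})$; then $u \preceq \hat{x}$, and by transitivity $u \preceq x_n$ for all $n$, so $u \in S_n$ and $f(u) \ge \inf_{S_n} f \ge f(x_{n+1}) - 2^{-n-1}$. Combining with $u \preceq x_{n+1}$, which gives $(\eps/\la)\, d(u, x_{n+1}) \le f(x_{n+1}) - f(u) \le 2^{-n-1}$, forces $x_{n+1}\to u$; but $x_{n+1}\to \hat{x}$, so $u = \hat{x}$, a contradiction. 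Hence (iii) holds for all $u\in X$ (the case $u=\hat{x}$ being trivial). Property (ii) is immediate from $\hat{x}\preceq x$. Property (i) follows from the same relation: $(\eps/\la)\, d(\hat{x}, x) \le f(x) - f(\hat{x}) \le f(x) - \inf_X f < \eps$, giving $d(\hat{x}, x) < \la$.

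The delicate part is calibrating the approximate-infimum tolerances $2^{-n-1}$ so that they shrink quickly enough to trap any hypothetical strict violator $u$ of (iii) at $\hat{x}$, while simultaneously using completeness of $X$ to produce $\hat{x}$ and lower semicontinuity of $f$ to transfer the chain of relations $x_m \preceq x_n$ to the limit. Everything else is bookkeeping built on the order-theoretic framework.
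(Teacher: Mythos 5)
Your proof is correct: it is the classical order-theoretic argument for the Ekeland variational principle, and the paper itself offers no proof of this lemma --- it is stated as a known result with citations to the literature --- so there is nothing internal to compare against. All the steps check out, including the one place where care is genuinely needed for this particular formulation: the strict inequality $d(\hat{x},x)<\lambda$ in (i) follows from the strict hypothesis $f(x)<\inf_X f+\varepsilon$ via $(\varepsilon/\lambda)\,d(\hat{x},x)\le f(x)-f(\hat{x})\le f(x)-\inf_X f<\varepsilon$, exactly as you argue. One minor remark: the tolerances $2^{-n-1}$ need not be summable or otherwise ``calibrated''; any null sequence $\varepsilon_n\downarrow 0$ works, since the minimality step only uses $d(u,x_{n+1})\le(\lambda/\varepsilon)\varepsilon_n\to 0$.
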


\begin{lemma}
\label{SR}
Suppose $X$ is a normed space, $f_1,f_2:X\to\R \cup\{+\infty\}$, and $\bx\in\dom f_1\cap\dom f_2$.
\begin{enumerate}
\item
Let $f_1$ and $f_2$ be convex and $f_1$ be continuous at a point in $\dom f_2$.
Then
$$\partial(f_1+f_2)(\bx)=\sd f_1(\bx)+\partial f_2(\bx).$$
\item
Let
$f_1$ be Lipschitz continuous and $f_2$ be lower semicontinuous in a neighbourhood of $\bx$.
Then
$$\partial^C(f_1+f_2)(\bx)\subset\sd^C f_1(\bx) +\partial^Cf_2(\bx).$$
\item
Let $X$ be Asplund,
$f_1$ be Lipschitz continuous and $f_2$ be lower semicontinuous in a neighbourhood of $\bx$.
Then, for any $x^*\in\partial^F(f_1+f_2)(\bx)$ and $\varepsilon>0$, there exist $x_1,x_2\in X$ with $\|x_i-\bx\|<\varepsilon$, $|f_i(x_i)-f_i(\bx)|<\varepsilon$ $(i=1,2)$, such that
$$x^*\in\partial^Ff_1(x_1) +\partial^Ff_2(x_2)+\varepsilon\B^\ast.$$
\end{enumerate}
\end{lemma}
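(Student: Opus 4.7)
The proof splits naturally into the three classical sum rules, and I would treat them in order.

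For part (i), the inclusion $\sd f_1(\bx)+\sd f_2(\bx)\subset\sd(f_1+f_2)(\bx)$ is immediate by adding the two defining subgradient inequalities. For the opposite direction, fix $x^*\in\sd(f_1+f_2)(\bx)$, which states that $\bx$ minimizes the convex function $x\mapsto f_1(x)+f_2(x)-\langle x^*,x\rangle$. The plan is a classical Moreau--Rockafellar separation in $X\times\R$: consider the convex set $A:=\epi f_1$ and the convex set $B:=\{(x,\al)\mid \al\le -f_2(x)+f_2(\bx)+\langle x^*,x-\bx\rangle+f_1(\bx)\}$. The minimization property gives $A\cap\Int B=\es$, and continuity of $f_1$ at a point of $\dom f_2$ ensures $B$ has nonempty interior meeting $\dom f_1\times\R$ suitably. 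A separating functional, normalized so that its $\R$-component equals $-1$, splits as $x^*=x_1^*+x_2^*$ with $x_i^*\in\sd f_i(\bx)$.

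For part (ii), the engine is the subadditivity
\begin{gather*}
(f_1+f_2)^\circ(\bx;z)\le f_1^\circ(\bx;z)+f_2^\circ(\bx;z)
\qdtx{for all} z\in X,
\end{gather*}
which follows from the definition of the Clarke--Rockafellar directional derivative once one uses that, for Lipschitz $f_1$, $f_1^\circ(\bx;\cdot)$ is the ordinary Clarke directional derivative, and that the limits defining $f_2^\circ$ can absorb the small Lipschitz perturbation coming from $f_1$. By \eqref{sdC}, $\sd^C f_i(\bx)$ is the (weak$^*$-closed convex) set whose support function is $f_i^\circ(\bx;\cdot)$, and $\sd^C f_1(\bx)$ is additionally weak$^*$ compact by the Lipschitz assumption, so $\sd^Cf_1(\bx)+\sd^Cf_2(\bx)$ is weak$^*$ closed convex with support function $f_1^\circ(\bx;\cdot)+f_2^\circ(\bx;\cdot)$. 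A Hahn--Banach bipolar argument applied to the inequality $\ang{x^*,z}\le(f_1+f_2)^\circ(\bx;z)$ then yields $x^*\in\sd^Cf_1(\bx)+\sd^Cf_2(\bx)$.

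Part (iii) is the deepest and I expect it to be the main obstacle: it is the Fabian--Mordukhovich fuzzy sum rule, whose validity actually characterizes Asplund spaces. The plan is: given $x^*\in\sd^F(f_1+f_2)(\bx)$, use \eqref{sdF} to rewrite the liminf condition as saying that, for any $\eta>0$, the function $\varphi(x):=(f_1+f_2)(x)-\ang{x^*,x}+\eta\|x-\bx\|$ attains a strict approximate minimum at $\bx$ on a small ball. Next, apply the Ekeland variational principle (\cref{EVP}) with a sufficiently small tolerance compared to $\eta$ and $\varepsilon$ to produce a nearby exact minimizer $\hat x$ of a perturbed version of $\varphi$. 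Then \emph{decouple} $f_1$ and $f_2$ by passing to the product space $X\times X$ and the function $(u,v)\mapsto f_1(u)+f_2(v)+c\|u-v\|^2$ for large $c$, invoking the Asplund property through either the Borwein--Preiss smooth variational principle or Fabian's separable reduction to locate points $x_1,x_2$ near $\hat x$ (and hence near $\bx$) with small differences in function values where Fréchet subgradients of $f_1$ and $f_2$ separately exist. Finally, apply \cref{L2.3} at $x_1,x_2$ and combine the resulting Fréchet subgradients to obtain the target inclusion $x^*\in\sd^F f_1(x_1)+\sd^F f_2(x_2)+\varepsilon\B^*$, absorbing all Ekeland and decoupling error terms into $\varepsilon\B^*$. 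The technical heart lies in the choice of perturbation parameters so that the simultaneous control of $\|x_i-\bx\|<\varepsilon$, $|f_i(x_i)-f_i(\bx)|<\varepsilon$, and the $\varepsilon\B^*$ slack is achieved in one stroke.
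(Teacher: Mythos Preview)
The paper does not actually prove \cref{SR}; it is stated in the preliminaries as a known toolbox lemma with pointers to the literature (Ioffe--Tihomirov, Z\u alinescu, Rockafellar, Clarke, Fabian, Mordukhovich, etc.), so there is no ``paper's own proof'' to compare against. Your outline is a faithful summary of the standard proofs of the three sum rules in those references, and nothing in it is off track: the Moreau--Rockafellar separation argument for (i), the subadditivity $ (f_1+f_2)^\circ\le f_1^\circ+f_2^\circ$ plus support-function/bipolar reasoning for (ii), and the Ekeland-plus-decoupling scheme (Borwein--Preiss or Fabian's separable reduction) for (iii) are exactly the arguments found in, e.g., \cite{Zal02}, \cite{Roc79,Cla83}, and \cite{Fab89,Mor06.1}, respectively.

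One small caution on part (ii): for general \lsc\ $f_2$ the quantity $f_2^\circ(\bx;\cdot)$ is the Clarke--Rockafellar derivative, which is not in general finite or the support function of $\sd^C f_2(\bx)$ in the same clean way as for Lipschitz functions. The cleanest route is to work instead with the epigraphical (geometric) definition $\sd^C f_2(\bx)=\{x^*:(x^*,-1)\in N^C_{\epi f_2}(\bx,f_2(\bx))\}$ and use the tangent-cone sum rule $T^C_{\epi(f_1+f_2)}\supset T^C_{\epi f_1}\cap T^C_{\epi f_2}$ together with Lipschitzness of $f_1$; this is how Rockafellar \cite{Roc79} actually handles the non-Lipschitz summand, and it avoids any implicit assumption that $f_2^\circ$ is finite-valued. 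Apart from that refinement, your plan is correct.
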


Recall that a Banach space is \emph{Asplund} if every continuous convex function on an open convex set is Fr\'echet differentiable on a dense subset \cite{Phe93}, or equivalently, if the dual of each its separable subspace is separable.
We refer the reader to \cite{Phe93,Mor06.1,BorZhu05} for discussions about and characterizations of Asplund spaces.
All reflexive, particularly, all finite dimensional Banach spaces are Asplund.

\if{
The next lemma is needed for our subsequent analysis.
\begin{lemma}\label{L1.6}
Let $\Omega$ be a subset of a metric space $X$, $f:X\rightarrow\R\cup\{+\infty\}$ be continuous.
Then
$\sup_{\Omega}f
=\sup_{\cl\Omega}f$.
\end{lemma}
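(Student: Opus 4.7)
The plan is a two-inclusion argument exploiting continuity on $\R\cup\{+\infty\}$.

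First, the easy direction: since $\Omega\subset\cl\Omega$, taking suprema immediately yields $\sup_\Omega f \le \sup_{\cl\Omega} f$, with no assumption needed beyond $f$ being well defined.

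For the reverse inequality, I would fix an arbitrary $x\in\cl\Omega$ and show $f(x)\le\sup_\Omega f$; taking the supremum over $x\in\cl\Omega$ then completes the proof. The key mechanism is that any number $c<f(x)$ is attained strictly in an entire neighbourhood of $x$, by continuity. Concretely, for any $c\in\R$ with $c<f(x)$, the set $\{u\in X\mid f(u)>c\}$ is open (this is exactly the definition of continuity of $f:X\to\R\cup\{+\infty\}$ equipped with the obvious topology, where a basis of neighbourhoods of $+\infty$ is the family $\{(c,+\infty]\}_{c\in\R}$) and contains $x$. Since $x\in\cl\Omega$, this open set meets $\Omega$, producing a point $u\in\Omega$ with $f(u)>c$, hence $\sup_\Omega f \ge c$. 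Letting $c\uparrow f(x)$ yields $\sup_\Omega f\ge f(x)$, whether $f(x)$ is finite or $+\infty$.

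I would then briefly note the two boundary cases: if $\Omega=\es$, then $\cl\Omega=\es$ and both suprema equal $-\infty$ (by the usual convention), so the identity holds trivially; and if $f\equiv+\infty$ on a nonempty part of $\cl\Omega$, the argument above still applies verbatim using $c\to+\infty$.

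No real obstacle is expected: the only subtlety is the interpretation of continuity for extended-real-valued $f$, which is handled by working with the preimages of the open sets $(c,+\infty]$ rather than arguing through sequences and $\lim f(x_n)$; the latter is equally valid in metric spaces, but the open-set formulation makes the handling of $f(x)=+\infty$ automatic and avoids any case split.
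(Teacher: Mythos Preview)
Your proof is correct and follows essentially the same two-inclusion strategy as the paper: the trivial direction from $\Omega\subset\cl\Omega$, then continuity for the reverse inequality $f(x)\le\sup_\Omega f$ for each $x\in\cl\Omega$. The only difference is cosmetic---the paper argues via sequences (pick $x_k\in\Omega$ with $x_k\to x$ and use $f(x)=\lim_{k\to\infty} f(x_k)\le\sup_\Omega f$), whereas you use open preimages of $(c,+\infty]$; both are standard, and as you note, your formulation absorbs the case $f(x)=+\infty$ without a separate argument.
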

\begin{proof}
It is straightforward that
$\sup_{x\in\Omega}f(x)
\le\sup_{x\in\cl\Omega}f(x)$.
Let $x\in\cl\Omega$.
Then there exists a sequence $\{x_k\}_{k\in\N}$ in $\Omega$ converges to $x$.
In view of the continuity of $f$, we have
$f(x)=\lim_{k\to+\infty}f(x_k)
\le \sup_{\Omega}f.$
Hence, $\sup_{\cl\Omega}f
\le\sup_{\Omega}f$.
The proof is complete.
\end{proof}
}\fi

\section{Slope Necessary and Sufficient Conditions}\label{S4}

This section is dedicated to slope necessary and sufficient conditions.
For simplicity, we focus on the uniform subregularity property in \cref{D1.2}(ii).
The corresponding conditions for the property in \cref{D1.2}(i) can be formulated in a similar way.
Besides, in view of \cref{R1.1}, in normed spaces (which is our setting in the next section) such conditions can be obtained as consequences of those for the subregularity.

The necessary conditions are deduced directly from the definitions of the respective properties, while the sufficient ones come from the application of the \EVP.
In the convex case, the conditions are necessary  and sufficient.

In this section, $P$ is a nonempty set, $X$ and $Y$ are metric spaces, and
$F:P\times X\rightrightarrows Y$.
We assume the parameters $\bx\in X$, $\by\in Y$,
$\al>0$, ${\delta\in]0,+\infty]}$ and $\mu\in]0,+\infty]$ to be fixed.
In what follows, we employ a collection of functions
\begin{gather}\label{psi}
\psi_p(u,v):={d(v,\by)}+i_{\gph F_{p}}(u,v),
\quad
u\in X,\;v\in Y
\end{gather}
depending on a parameter $p\in P$.
Along with the standard maximum {distance} on $X\times Y$, we also use a {metric} depending on a parameter $\ga>0$:
\begin{gather}\label{pdist}
d_\ga((u,v),(x,y)) :=\max\left\{d(u,x),\ga d(v,y)\right\},\quad
u,x\in X,\;v,y\in Y.
\end{gather}

The next theorem plays a crucial role for the subsequent considerations.
The slope and subdifferential/normal cone/coderivative
conditions for uniform $\al-$subregularity in this paper are consequences of this theorem.

\begin{theorem}\label{P1}
\begin{enumerate}
\item
If $F$ is $\al-$subregular in $x$ uniformly in $p$ over $P$ at $(\bx,\by)$ with ${\delta}$ and $\mu$, then
\begin{align}\label{P1-1}
\sup_{\substack{(u,v)\in\gph F_p,\,(u,v)\ne (x,y)\\
d(u,\bx)<\de+\mu,\,d(v,\by)<\al\mu}}
{\dfrac{d(y,\by)-d(v,\by)}{d_\ga((u,v),(x,y))}}\ge\al
\end{align}
for $\gamma:=\al\iv$, and all $p\in P$, $x\in B_{\de}(\bx)$ and $y\in Y$ satisfying
\begin{align}\label{P4.1-1}
x\notin F_p\iv(\by),\quad
y\in F(p,x){\cap B_{\al\mu}(\by)}.
\end{align}

\item
Suppose $X$ and $Y$ are complete, and $\gph F_p$ is closed for all $p\in P$.
If inequality \eqref{P1-1} holds for some
$\ga>0$, and all $p\in P$, $x\in B_{\de+\mu}(\bx)$ and $y\in Y$ satisfying \eqref{P4.1-1}, then
$F$ is $\al-$subregular in $x$ uniformly in $p$ over $P$ at $(\bx,\by)$ with $\delta$ and $\mu$.
\end{enumerate}
\end{theorem}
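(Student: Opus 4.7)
For part (i), the plan is to unpack the $\al$-subregularity inequality directly. Fix $p \in P$, $x \in B_\de(\bx)$ and $y \in F(p, x) \cap B_{\al\mu}(\by)$ with $x \notin F_p\iv(\by)$; then $\by \notin F(p, x)$ forces $y \ne \by$. From $d(\by, F(p, x)) \le d(y, \by) < \al\mu$ and \cref{D1.2}(ii),
\begin{align*}
\al\, d(x, F_p\iv(\by)) \le d(\by, F(p, x)) \le d(y, \by).
\end{align*}
For each small $\eps > 0$ pick $u \in F_p\iv(\by)$ with $d(u, x) < d(x, F_p\iv(\by)) + \eps$ and set $v := \by$. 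Then $(u, v) \in \gph F_p$, $(u, v) \ne (x, y)$, $d(u, \bx) < \mu + \de$ (for $\eps$ small enough), and $d(v, \by) = 0 < \al \mu$. With $\ga := \al\iv$ the ratio in \eqref{P1-1} at $(u,v)$ reduces to $d(y, \by) / \max\{d(u, x), d(y, \by)/\al\}$; sending $\eps \downarrow 0$ drives it to $\al$, proving \eqref{P1-1}.

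For part (ii), I would argue by contradiction using the \EVP. Suppose there exist $p \in P$ and $x \in B_\de(\bx)$ with $d(\by, F(p, x)) < \al \mu$ but $\al\, d(x, F_p\iv(\by)) > d(\by, F(p, x))$. These two strict inequalities make it possible to pick $t, \eta > 0$ with
\begin{align*}
d(\by, F(p, x)) < \al t - \eta \AND t < \min\{\mu,\, d(x, F_p\iv(\by))\}.
\end{align*}
Choose $y \in F(p, x)$ with $d(y, \by) < \al t - \eta$. On the complete metric space $(X \times Y, d_\ga)$ from \eqref{pdist}, the function $\psi_p$ defined in \eqref{psi} is \lsc\ and nonnegative, and $\psi_p(x, y) = d(y, \by) < \al t - \eta \le \inf \psi_p + (\al t - \eta)$. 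Apply \cref{EVP} with $\eps := \al t - \eta$ and $\la := t$ to obtain $(\hat u, \hat v) \in \gph F_p$ satisfying $d_\ga((\hat u, \hat v), (x, y)) < t$, $d(\hat v, \by) \le d(y, \by) < \al t$, and
\begin{align*}
d(v, \by) + (\al - \eta/t)\, d_\ga((u, v), (\hat u, \hat v)) \ge d(\hat v, \by) \quad \text{for all } (u, v) \in \gph F_p.
\end{align*}

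The displayed inequality bounds the supremum in \eqref{P1-1} evaluated at $(\hat u, \hat v)$ from above by $\al - \eta/t < \al$. To reach the contradiction I must verify that $(\hat u, \hat v)$ satisfies \eqref{P4.1-1}: $d(\hat u, \bx) \le d(\hat u, x) + d(x, \bx) < t + \de \le \mu + \de$; $\hat v \in F(p, \hat u)$ with $d(\hat v, \by) < \al t < \al \mu$; and, because $d(\hat u, x) < t < d(x, F_p\iv(\by))$, the triangle inequality gives $d(\hat u, F_p\iv(\by)) > 0$, so $\hat u \notin F_p\iv(\by)$. The hypothesis then asserts the supremum at $(\hat u, \hat v)$ is $\ge \al$, contradicting the EVP bound $\al - \eta/t$.

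The key technical hurdle is calibrating $t$ and $\eta$ so that simultaneously (a) $\psi_p(x, y)$ lies below $\inf \psi_p + \eps$ for EVP to apply, (b) the slope constant $\eps/\la = \al - \eta/t$ is \emph{strictly} less than $\al$ to force a genuine contradiction, and (c) the EVP radius $\la = t$ is small enough to keep $(\hat u, \hat v)$ inside $B_{\de + \mu}(\bx) \times B_{\al\mu}(\by)$ and $\hat u$ outside $F_p\iv(\by)$. The two gaps supplied by the negation of \eqref{D1.2-2}, namely between $d(\by, F(p, x))$ and $\al\mu$ and between $d(\by, F(p, x))$ and $\al\, d(x, F_p\iv(\by))$, provide exactly the slack needed.
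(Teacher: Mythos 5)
Your proposal is correct and follows essentially the same route as the paper: part (i) by pairing a near-projection of $x$ onto $F_p\iv(\by)$ with the point $\by$, and part (ii) by contradiction via the Ekeland variational principle applied to $\psi_p$ with the ratio $\eps/\la$ calibrated strictly below $\al$. The only cosmetic difference is that you apply the EVP on all of $X\times Y$ rather than on the restricted ball product $\overline{B}_{\de+\mu}(\bx)\times\overline{B}_{\al\mu}(\by)$ as the paper does; under the stated completeness and closedness hypotheses both are valid, and your localization estimates keep $(\hat u,\hat v)$ inside the region where \eqref{P1-1} is assumed.
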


\begin{proof}
\begin{enumerate}
\item
Suppose $F$ is $\al-$subregular in $x$ uniformly in $p$ over $P$ at $(\bx,\by)$ with $\delta$ and $\mu$.
Let $p\in P$, $x\in B_{\de}(\bx)$ and $y\in Y$ satisfy \eqref{P4.1-1}, ${\ga:=\al\iv}$, and $\eta>1$.
By \eqref{P4.1-1} and \cref{D1.2}(ii), there exist
a $\xi\in]1,\eta[$ such that $\xi{d(y,\by)}<\al\mu$,
and a point $\hat{x}\in F_p\iv(\by)$ such that $\al{d(x,\hat{x})}<\xi{d(y,\by)}$.
Thus, $(\hat{x},\by)\in\gph F_p$, $(\hat{x},\by)\ne(x,y)$,
\begin{gather*}
d(\hat{x},\bx)\le d(\hat{x},x)+d(x,\bar{x})<\al\iv\xi
d(y,\by)+\delta
\le\mu+\de,\quad\text{and}\\
d_\ga((x,y),(\hat{x},\by))=\max\{
d(x,\hat{x}),\ga d(y,\by)\}\le\al\iv\max\{\xi,1\}d(y,\by)=\al\iv\xi d(y,\by).
\end{gather*}
Hence,
\begin{gather*}
\sup_{\substack{(u,v)\in\gph F_p,\,(u,v)\ne (x,y)\\
d(u,\bx)<\de+\mu,\,d(v,\by)<\al\mu}}
\dfrac{d(y,\by)-d(v,\by)}{d_\ga((u,v),(x,y))}\ge
\dfrac{d(y,\by)}{d_\ga((\hat{x},\by),(x,y))}
\ge\al\xi\iv>\al\eta\iv.
\end{gather*}
Letting $\eta\downarrow 1$, we arrive at \eqref{P1-1}.

\item
Suppose $F$ is not $\al-$subregular in $x$ uniformly in $p$ over $P$ at $(\bx,\by)$ with $\delta$ and $\mu$.
By \cref{D1.2}(ii), there exist points $p\in P$ and $x\in B_\de(\bx)$ such that
\begin{align*}
d(\by,F(p,x))<\al\min\{d(x,F_p\iv(\by)),\mu\}.
\end{align*}
Hence, $x\notin F_p\iv(\by)$, or equivalently, $\by\notin F(p,x)$.
Set $\mu_0:=\min\{d(x,F_p\iv(\by)),\mu\}$.
Choose a number $\varepsilon$ such that
$d(\by,F(p,x))<\varepsilon<\al\mu_0$, and a point
$y\in F(p,x)$ such that ${d(y,\by)}<\varepsilon$.
The function $\psi_p:X\times Y\rightarrow\R_+\cup\{+\infty\}$, defined by \eqref{psi}, is lower semicontinuous on
$\overline{B}_{\de+\mu}(\bx)\times{\overline{B}_{\al\mu}(\by)}$.
Besides,
\sloppy
\begin{align*}
\psi_p(x,y)={d(y,\by)}< \inf_{\overline{B}_{\de+\mu}(\bx)\times {\overline{B}_{\al\mu}(\by)}}\psi_p+\varepsilon.
\end{align*}
Let $\ga>0$.
Applying the Ekeland variational principle (\cref{EVP}) to the restriction of $\psi_p$ to the complete metric space $\overline{B}_{\de+\mu}(\bx)\times {\overline{B}_{\al\mu}(\by)}$ with the metric \eqref{pdist}, we can find a point $(\hat{x},\hat{y})\in \overline{B}_{\de+\mu}(\bx)\times {\overline{B}_{\al\mu}(\by)}$ such that
\begin{gather}\label{P1-3}
d_\ga((\hat{x},\hat{y}),(x,y))< \mu_0,\quad
\psi_p(\hat{x},\hat{y})\le\psi_p(x,y),
\\\label{P1-5}
\psi_p(\hat{x},\hat{y})\le\psi_p(u,v) +(\varepsilon/\mu_0) d_\ga((u,v),(\hat{x},\hat{y}))
\end{gather}
for all $(u,v)\in \overline{B}_{\de+\mu}(\bx)\times {\overline{B}_{\al\mu}(\by)}$.
By \eqref{P1-3},
we  have $\hat{y}\in F(p,\hat{x})$, and
\begin{gather*}
d(\hat{x},\bx)\le d(\hat{x},x)+d(x,\bx)<\mu_0+\de \le\mu+\de,\\
d(\hat{y},\by)\le d(y,\by)<\eps<\al\mu_0\le\al\mu.
\end{gather*}
Besides, $d(\hat{x},x)<\mu_0\le d(x,F_p\iv(\by))$.
This implies $\hat{x}\notin F_p\iv(\by)$, and consequently, ${\hat{y}\ne \by}$.
It follows from \eqref{P1-5} that
\begin{align*}
\sup_{\substack{(u,v)\in\gph F_p,\, (u,v)\ne(\hat{x},\hat{y})\\
d(u,\bx)<\de+\mu,\,d(v,\by)<\al\mu}}
\dfrac{d(\hat y,\by)-d(v,\by)}{d_\ga((u,v),(\hat{x},\hat{y}))} \le\dfrac{\eps}{\mu_0}<\al.
\end{align*}
The last estimate contradicts \eqref{P1-1}.
\end{enumerate}
\end{proof}

\if{
Combining the two parts of \cref{P1}, we arrive at a complete primal space characterization of $\al-$subregularity.
}\fi
\if{
\begin{corollary}
Let $X$ and $Y$ be complete, and $\gph F_p$ be closed for all $p\in P$.
The mapping $F$ is $\al-$subregular in $x$ uniformly in $p$ over $P$ at $(\bx,\by)$ with some $\delta\in]0,+\infty]$ and $\mu\in]0,+\infty]$ if and only if inequality \eqref{P1-1} holds
with $\ga:=\al\iv$ for all $p$, $x$ and $y$ satisfying~\eqref{P4.1-1}.
\end{corollary}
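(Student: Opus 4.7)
The plan is to derive this corollary directly by combining the two parts of \cref{P1}, with no essentially new argument required beyond the theorem itself.

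For the necessity direction, I would assume $F$ is $\al$-subregular in $x$ uniformly in $p$ over $P$ at $(\bx,\by)$ with some $\de\in]0,+\infty]$ and $\mu\in]0,+\infty]$. Applying part (i) of \cref{P1} then immediately yields that inequality \eqref{P1-1} holds with $\ga:=\al\iv$ for all $p\in P$, $x\in B_\de(\bx)$, and $y\in Y$ satisfying \eqref{P4.1-1}, which supplies one direction of the biconditional.

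For sufficiency, I would invoke part (ii). Suppose that inequality \eqref{P1-1} holds with $\ga:=\al\iv$ for all $p$, $x$, and $y$ satisfying \eqref{P4.1-1} for some $\de$ and $\mu$. The standing hypotheses of the corollary, namely completeness of $X$ and $Y$ together with closedness of $\gph F_p$ for every $p\in P$, are exactly the assumptions needed to invoke part (ii) of \cref{P1}, which then delivers $\al$-subregularity of $F$ at $(\bx,\by)$ with $\de$ and $\mu$.

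The only subtle point to manage is a mild mismatch in the admissible range of $x$ across the two parts of \cref{P1}: part (i) outputs the inequality for $x\in B_\de(\bx)$, whereas part (ii) requires it for $x\in B_{\de+\mu}(\bx)$. Because the corollary quantifies existentially over the pair $(\de,\mu)$, this gap causes no difficulty; one can shrink or enlarge $\de$ relative to $\mu$ as needed to align the two formulations. I do not anticipate any real obstacle—the substantive technical content (the \EVP argument) is already carried out in the proof of \cref{P1}, and the corollary amounts to a convenient repackaging of that theorem.
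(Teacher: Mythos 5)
Your overall plan---necessity from \cref{P1}(i), sufficiency from \cref{P1}(ii)---is exactly the route the paper intends for this statement, and you are right to single out the mismatch between the balls $B_\de(\bx)$ and $B_{\de+\mu}(\bx)$. The gap is in your claim that this mismatch ``causes no difficulty'' because the corollary quantifies existentially over $(\de,\mu)$. It cannot be removed by rescaling the parameters, because inequality \eqref{P1-1} is not a fixed inequality: the constraint region of the supremum ($d(u,\bx)<\de+\mu$, $d(v,\by)<\al\mu$) and condition \eqref{P4.1-1} (through $B_{\al\mu}(\by)$) themselves depend on $\de$ and $\mu$. To convert the output of part (i) for a pair $(\de,\mu)$ into the input of part (ii) for a pair $(\de',\mu')$, you would need $B_{\de'+\mu'}(\bx)\subset B_{\de}(\bx)$ and $B_{\al\mu'}(\by)\subset B_{\al\mu}(\by)$, i.e.\ $\de'+\mu'\le\de$ and $\mu'\le\mu$, while at the same time the supremum over the $(\de',\mu')$-region must dominate the one over the $(\de,\mu)$-region, which forces $\de+\mu\le\de'+\mu'$ and $\mu\le\mu'$; together these yield $\mu\le0$, a contradiction. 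Equivalently: the witness $(\hat x,\by)$ produced in the proof of part (i) only satisfies $d(\hat x,\bx)<d(x,\bx)+\mu$, so for $x$ with $d(x,\bx)$ close to $\de+\mu$ it may lie outside the supremum's admissible region. Whichever reading of the right-hand side you adopt ($x$ ranging over $B_\de(\bx)$ or over $B_{\de+\mu}(\bx)$), one of the two implications is simply not delivered by \cref{P1}, and no choice of parameters repairs it.

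This is not a cosmetic issue the paper silently absorbs: the Introduction and \cref{R2.1}(v) explicitly describe this discrepancy as a genuine (if ``not big'') gap between the necessary and the sufficient nonlocal conditions, and the clean ``if and only if'' characterizations the paper does assert are obtained either under convexity of $\gph F_p$ (\cref{S4.2}) or in a limiting, parameter-free form (conditions stated via $\liminf$ as $x\to\bx$, $y\to\by$ and $\sup_{\ga>0}$), both of which dissolve the dependence on a specific pair $(\de,\mu)$. As written, your argument establishes the sufficiency direction but not the necessity direction (or vice versa, depending on how the quantifier over $x$ is read), so the biconditional is not proved.
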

}\fi
\begin{remark}\label{R2.1}
\begin{enumerate}
\item
The expression in the left-hand side of the inequality \eqref{P1-1} is the nonlocal $\ga$-slope \cite[p.~60]{Kru15} at $(x,y)$ of the restriction of the function $\psi_p$, given by \eqref{psi}, to $\gph F_p\cap[B_{\de+\mu}(\bx)\times B_{\al\mu}(\by)]$.
\item
By the definition of the {metric} \eqref{pdist}, if inequality \eqref{P1-1} is satisfied with a $\ga>0$, then it is also satisfied with any $\ga'\in]0,\gamma[$.
This observation is applicable to all slope inequalities in this section.

\item
The completeness of the space and closedness assumption in part (ii) of \cref{P1} (and the subsequent statements) can be relaxed: it suffices to require that $\gph F_p\cap [\overline{B}_{\de+\mu}(\bx)\times \overline{B}_{\al\mu}(\by)]$ is complete
for all $p\in P$.

\item
The sufficient condition in part (ii) of \cref{P1} is often hidden in the proofs of dual sufficient conditions.

\item
When $X$ and $Y$ are complete, and $\gph F_p$ is closed for all $p\in P$,
the gap between the nonlocal necessary and sufficient regularity conditions in parts (i) and (ii) of \cref{P1} is not big:
they share the same inequality \eqref{P1-1}; with all the other parameters coinciding, the necessity part (i) guarantees this inequality to hold for all $x\in B_{\de}(\bx)$, while the sufficiency part (ii) requires it to hold for all $x$ in a larger set $B_{\de+\mu}(\bx)$.
\end{enumerate}
\end{remark}

We now illustrate \cref{P1} by applying it to the local (in $p$) setting in \cref{D1.3}(ii).
The application is straightforward.
We provide a single illustration of this kind, although the other statements in this and the next section are also applicable to this setting.

\begin{corollary}
Let $P$ be a metric space, $(\bp,\bx,\by)\in\gph F$ and $\eta\in]0,+\infty]$.
\begin{enumerate}
\item
If $F$ is $\al-$subregular in $x$ uniformly in $p$ at $(\bp,\bx,\by)$ with $\eta$, $\delta$ and $\mu$, then inequality \eqref{P1-1} holds with $\gamma:=\al\iv$
for all ${p\in B_\eta(\bp)}$, $x\in B_{\de}(\bx)$ and $y\in Y$ satisfying \eqref{P4.1-1}.

\item
Suppose $X$ and $Y$ are complete, and $\gph F_p$ is closed for all $p\in B_\eta(\bp)$.
If inequality \eqref{P1-1} holds for some $\ga>0$, and all $p\in B_\eta(\bp)$, $x\in B_{\de+\mu}(\bx)$ and $y\in Y$ satisfying \eqref{P4.1-1}, then
$F$ is $\al-$subregular in $x$ uniformly in $p$ at $(\bp,\bx,\by)$ with $\eta$, $\delta$ and $\mu$.
\end{enumerate}
\end{corollary}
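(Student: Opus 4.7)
The plan is to recognize that this corollary is a straightforward specialization of \cref{P1} obtained by replacing the abstract parameter set $P$ with the metric ball $B_\eta(\bp)\subset P$. No fresh argument is needed; one just has to match the hypotheses and conclusions carefully.

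The key observation is that the property in \cref{D1.3}(ii) of being $\al-$subregular in $x$ uniformly in $p$ at $(\bp,\bx,\by)$ with parameters $\eta$, $\delta$ and $\mu$ is, by definition, precisely the property in \cref{D1.2}(ii) of being $\al-$subregular in $x$ uniformly in $p$ over the set $\widetilde{P}:=B_\eta(\bp)$ at $(\bx,\by)$ with parameters $\delta$ and $\mu$. Both require inequality \eqref{D1.2-2} to hold for exactly the same collection of pairs $(p,x)$, and the restriction $p\in B_\eta(\bp)$ in the local version is encoded in the choice of the parameter set in the uniform version.

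For part (i), I would apply \cref{P1}(i) with $P$ replaced by $\widetilde{P}$ (which is nonempty, since $\bp\in\widetilde{P}$). The conclusion of \cref{P1}(i) then gives \eqref{P1-1} with $\gamma:=\al\iv$ for all $p\in\widetilde{P}=B_\eta(\bp)$, $x\in B_\de(\bx)$ and $y\in Y$ satisfying \eqref{P4.1-1}, which is the desired statement. For part (ii), I would apply \cref{P1}(ii) with $P$ replaced by $\widetilde{P}$. The standing hypotheses match: $X$ and $Y$ are complete, and $\gph F_p$ is closed for every $p\in\widetilde{P}=B_\eta(\bp)$. The assumed inequality \eqref{P1-1} for all $p\in B_\eta(\bp)$, $x\in B_{\de+\mu}(\bx)$ and $y$ satisfying \eqref{P4.1-1} is exactly the hypothesis of \cref{P1}(ii) for the parameter set $\widetilde{P}$, so its conclusion yields $\al-$sub\-reg\-ularity uniformly over $\widetilde{P}$, which, via the identification above, is the desired uniform subregularity at $(\bp,\bx,\by)$ with $\eta$, $\delta$ and $\mu$.

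Since the argument is a pure relabeling of the parameter set, there is no genuine obstacle; the metric structure on $P$ plays no role in the proof beyond permitting the description of admissible perturbation parameters as the ball $B_\eta(\bp)$. The only minor point worth double-checking is that the quantifiers in \cref{D1.3}(ii) and in the ``over $\widetilde{P}$'' version of \cref{D1.2}(ii) line up exactly, which they do.
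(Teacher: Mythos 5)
Your proposal is correct and is essentially the paper's own argument: the paper gives no separate proof of this corollary, stating only that the application of the nonlocal theorem to the local (in $p$) setting is straightforward, which is precisely the relabeling $P:=B_\eta(\bp)$ that you carry out. Your identification of the property in \cref{D1.3}(ii) with the property in \cref{D1.2}(ii) over the parameter set $B_\eta(\bp)$, together with the observation that this set is nonempty and that the closedness/completeness hypotheses transfer verbatim, is exactly what is needed.
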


The next statement presents a localized version of \cref{P1}.

\begin{corollary}\label{C2.2}
\begin{enumerate}
\item
Suppose $X$ and $Y$ are normed spaces, and $\gph F_p$ is convex for all $p\in P$.
If $F$ is $\al-$subregular in $x$ uniformly in $p$ over $P$ at $(\bx,\by)$ with ${\delta}$ and $\mu$, then
\begin{align}\label{C1-1}
\limsup_{\substack{u\to x,\,v\to y,\,(u,v)\in\gph F_p,\,(u,v)\ne (x,y)\\
d(u,\bx)<\de+\mu,\,d(v,\by)<\al\mu}}
{\dfrac{d(y,\by)-d(v,\by)}{d_\ga((u,v),(x,y))}}\ge\al
\end{align}
for $\gamma:=\al\iv$, and all
$p\in P$, $x\in B_{\de}(\bx)$ and $y\in Y$ satisfying \eqref{P4.1-1}.

\item
Suppose $X$ and $Y$ are complete, and $\gph F_p$ is closed for all $p\in P$.
If inequality \eqref{C1-1} holds for some $\ga>0$, and all $p\in P$, $x\in B_{\de+\mu}(\bx)$ and $y\in Y$ satisfying \eqref{P4.1-1}, then
$F$ is $\al-$subregular in $x$ uniformly in $p$ over $P$ at $(\bx,\by)$ with $\delta$ and $\mu$.
\end{enumerate}
\end{corollary}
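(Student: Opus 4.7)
Part (ii) will be essentially free: the double limit in \eqref{C1-1} is taken over a subset of the supremum in \eqref{P1-1} (all points with $(u,v)\ne(x,y)$ lying in $\gph F_p$ together with the ball constraints), so the limsup in \eqref{C1-1} is no larger than the supremum in \eqref{P1-1}. Hence, assuming \eqref{C1-1} holds with value at least $\al$ for some $\ga>0$, the same is true of the supremum in \eqref{P1-1}, and \cref{P1}(ii) yields $\al-$subregularity. Part (i) is the real content: I plan to apply \cref{P1}(i) to obtain the nonlocal estimate \eqref{P1-1} with $\ga=\al\iv$, and then use convexity of $\gph F_p$ to show that this supremum is, in fact, attained as a limit along $\gph F_p\ni(u,v)\to(x,y)$.

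\textbf{Execution of Part (i).} Fix $p\in P$, $x\in B_\de(\bx)$, $y\in Y$ satisfying \eqref{P4.1-1}, and set $\ga:=\al\iv$. Given $\eta\in]0,\al[$, \cref{P1}(i) delivers a pair $(\hat u,\hat v)\in\gph F_p$ with $(\hat u,\hat v)\ne(x,y)$, $\|\hat u-\bx\|<\de+\mu$, $\|\hat v-\by\|<\al\mu$, and
\begin{align*}
\frac{d(y,\by)-d(\hat v,\by)}{d_\ga((\hat u,\hat v),(x,y))}\ge\al-\eta.
\end{align*}
For each $t\in\,]0,1]$, form the convex combination $(u_t,v_t):=(1-t)(x,y)+t(\hat u,\hat v)$, which lies in $\gph F_p$ by convexity. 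Clearly $(u_t,v_t)\to(x,y)$ as $t\downarrow 0$ and $(u_t,v_t)\ne(x,y)$. Convex combinations of balls give $\|u_t-\bx\|<(1-t)\de+t(\de+\mu)\le\de+\mu$ and $\|v_t-\by\|<\al\mu$, so the localization constraints in \eqref{C1-1} are preserved.

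\textbf{Two estimates.} The denominator scales linearly:
\begin{align*}
d_\ga((u_t,v_t),(x,y))=t\max\{\|\hat u-x\|,\ga\|\hat v-y\|\}=t\,d_\ga((\hat u,\hat v),(x,y)).
\end{align*}
For the numerator, the triangle inequality applied to $v_t-\by=(1-t)(y-\by)+t(\hat v-\by)$ yields
\begin{align*}
d(y,\by)-d(v_t,\by)\ge d(y,\by)-(1-t)d(y,\by)-t\,d(\hat v,\by)=t\bigl(d(y,\by)-d(\hat v,\by)\bigr).
\end{align*}
Dividing, the factor $t$ cancels and the ratio along $(u_t,v_t)$ is bounded below by $\al-\eta$ for every $t\in\,]0,1]$. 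Taking the limsup as $t\downarrow 0$ shows the left-hand side of \eqref{C1-1} is at least $\al-\eta$, and letting $\eta\downarrow 0$ concludes Part~(i).

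\textbf{Main obstacle.} The only delicate point is that the convex combinations must satisfy the localization constraints appearing under the $\limsup$ in \eqref{C1-1}; this is what forces the requirement $\|\hat u-\bx\|<\de+\mu$ to be passed through convex combination. Everything else is a direct computation based on convexity of $\gph F_p$ and of the norm, together with the linear scaling of the parametric metric $d_\ga$ under convex combinations with one endpoint fixed at $(x,y)$. Part (ii) adds nothing beyond the trivial inequality $\limsup\le\sup$ and a citation of \cref{P1}(ii).
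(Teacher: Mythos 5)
Your proposal is correct and follows essentially the same route as the paper: both parts are reductions to \cref{P1}, with part (ii) resting on the trivial inequality ``limsup $\le$ sup'' (the paper's \cref{P1.3}(i)) and part (i) on the fact that, under convexity, the localized quantity is no smaller than the nonlocal one. The only difference is presentational: the paper cites \cref{P1.3}(ii) (equality of the local and nonlocal slopes of the convex function $\psi_p$ restricted to $\gph F_p\cap[B_{\de+\mu}(\bx)\times B_{\al\mu}(\by)]$, via \cref{R2.1}(i) and \cref{R2.2}(i)), whereas you inline the standard proof of that lemma by evaluating the ratio along the segment $(u_t,v_t)=(1-t)(x,y)+t(\hat u,\hat v)$; your computation, including the preservation of the ball constraints under convex combination, is accurate.
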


\begin{proof}
In view of \cref{R2.1}(i) and \cref{R2.2}(i),
assertion (i) follows from \cref{P1.3}(ii) and \cref{P1}(i), while assertion (ii) is a consequence of \cref{P1.3}(i) and
\cref{P1}(ii).
\end{proof}

\begin{remark}\label{R2.2}
\begin{enumerate}
\item
The expression in the left-hand side of inequality \eqref{C1-1} is the $\ga$-slope \cite[p.~61]{Kru15}  at $(x,y)$ of the restriction of the function $\psi_p$, given by \eqref{psi}, to $\gph F_p\cap[B_{\de+\mu}(\bx)\times B_{\al\mu}(\by)]$.
\item
The convexity assumption in part (i) of \cref{C2.2} (and the subsequent statements) can be relaxed: it suffices to require that $\gph F_p\cap [\overline{B}_{\de+\mu}(\bx)\times \overline{B}_{\al\mu}(\by)]$ is convex for all $p\in P$.

\item
In the particular case when $P$ is a neighborhood of a point $\bar p$ in some metric space, part (ii) of \cref{C2.2} is a quantitative version of \cite[Proposition~3.5]{Iof17.1}.
Ngai et al. \cite[Theorem~3]{NgaTroThe13} established a primal sufficient condition for the property under the assumption that the mapping $F(\cdot,\bx)$ is lower semicontinuous at $\bar p$.
\end{enumerate}
\end{remark}


\if{
Combining the two parts of \cref{C2.2}, we arrive at a complete primal space infinitesimal characterization of uniform $\al-$subregularity in the convex setting.
}\fi

\if{
\begin{corollary}\label{T4.1}
Suppose $X$ and $Y$ are Banach spaces, and $\gph F_p$ is closed and convex for all $p\in P$.
The mapping $F$ is $\al-$subregular in $x$ uniformly in $p$ over $P$ at $(\bx,\by)$ if and only if
inequality \eqref{C1-1}
holds with $\ga=\al\iv$ for all $p$, $x$ and $y$ satisfying \eqref{P4.1-1}.
\end{corollary}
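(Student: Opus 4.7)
The plan is to obtain the iff characterization as a direct combination of the two parts of \cref{C2.2}, exploiting the hypotheses that $X$ and $Y$ are complete (Banach) and that $\gph F_p$ is both closed and convex for every $p\in P$. Both directions follow without any new construction: the Banach hypothesis provides the completeness needed for part (ii), while closedness plus convexity simultaneously fulfils the standing assumptions of parts (i) and (ii). The only delicate point is matching the range of $x$ appearing in the slope condition on the two sides of the equivalence.

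For the necessity direction, I would suppose that $F$ is $\al-$subregular in $x$ uniformly in $p$ over $P$ at $(\bx,\by)$. Then there exist $\delta,\mu\in]0,+\infty]$ witnessing this property, and since each $\gph F_p$ is convex, \cref{C2.2}(i) applies and yields that \eqref{C1-1} holds with $\gamma:=\al\iv$ for all $p\in P$, $x\in B_\delta(\bx)$, and $y\in Y$ satisfying \eqref{P4.1-1}. This delivers one half of the claimed equivalence with the same parameters as the original subregularity property.

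For the sufficiency direction, I would assume that \eqref{C1-1} holds with $\gamma:=\al\iv$ for some parameters $\delta,\mu$ and all $p\in P$, $x\in B_{\delta+\mu}(\bx)$ and $y$ satisfying \eqref{P4.1-1}. Since $X$ and $Y$ are complete and each $\gph F_p$ is closed, \cref{C2.2}(ii) applies directly and concludes that $F$ is $\al-$subregular in $x$ uniformly in $p$ over $P$ at $(\bx,\by)$ with these same $\delta$ and $\mu$.

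The main obstacle I expect to address, by a simple parameter rescaling, is the discrepancy between the range $B_\delta(\bx)$ supplied by \cref{C2.2}(i) and the larger range $B_{\delta+\mu}(\bx)$ required by \cref{C2.2}(ii). Since the iff statement quantifies existentially over $\delta$ and $\mu$, starting from subregularity parameters $\delta_0,\mu_0$ one may set $\mu:=\min\{\mu_0,\delta_0/2\}$ and $\delta:=\delta_0-\mu$, ensuring $\delta+\mu\le\delta_0$ and hence that the slope condition obtained on $B_{\delta_0}(\bx)$ from part (i) remains valid on the smaller ball $B_{\delta+\mu}(\bx)$ needed to invoke part (ii) in the reverse implication. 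This reconciles the two sides of the equivalence cleanly and completes the argument.
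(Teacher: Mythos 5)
Your proposal is correct and follows exactly the route the paper intends: the corollary is stated as a direct combination of the two parts of \cref{C2.2}, with the Banach and closed-convex hypotheses feeding the respective parts. Your extra care in reconciling the balls $B_\de(\bx)$ and $B_{\de+\mu}(\bx)$ by re-choosing $\de$ and $\mu$ is sound (and harmless, since the limsup in \eqref{C1-1} is a local quantity unaffected by shrinking the constraint region), so nothing further is needed.
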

}\fi
\section{Dual Necessary and Sufficient Conditions}\label{S5}

In this section, we continue studying the mapping
$F:P\times X\rightrightarrows Y$ where $P$ is a nonempty set, while
$X$ and $Y$ are assumed to be normed spaces.
We also assume the parameters $\bx\in X$, $\by\in Y$,
$\al>0$, ${\delta\in]0,+\infty]}$ and $\mu\in]0,+\infty]$ to be fixed, and the collection of functions $\psi_p$ be defined by~\eqref{psi}.

The primal and dual parametric product space norms, corresponding to the distance \eqref{pdist}, have the following form:
\begin{gather}\label{pnorm}
\|(x,y)\|_{\ga}=\max\{\|x\|,{\ga}\|y\|\},\quad
x\in X,\;y\in Y,
\\\label{dnorm}
\|(x^*,y^*)\|_{\ga}=\|x^*\|+\ga\iv\|y^*\|,\quad
x^*\in X^*,\;y^*\in Y^*.
\end{gather}
We denote by $d_\ga$ the distance in $X^*\times Y^*$ determined by \eqref{dnorm}.

\subsection{Dual Sufficient Conditions}

In this subsection, we assume additionally that $X$ and $Y$ are Banach spaces, and $\gph F_p$ is closed for all $p\in P$.

The next subdifferential sufficient condition for uniform $\al-$subregularity is a consequence of \cref{C2.2}(ii) thanks to the subdifferential sum rules in \cref{SR}.

\begin{proposition}\label{P5}
Let $\sd:=\sd^C$.
If
\begin{align}\label{P5-1}
d_\gamma\left(0,\sd\psi_p(x,y)\right)\ge\al
\end{align}
for some $\ga>0$, and all $p\in P$, $x\in B_{\de+\mu}(\bx)$ and $y\in Y$ satisfying \eqref{P4.1-1}, then $F$ is $\al-$subregular in $x$ uniformly in $p$ over $P$ at $(\bx,\by)$ with $\delta$ and~$\mu$.

If $X$ and $Y$ are Asplund, then the above assertion is valid with $\sd:=\sd^F$.
\end{proposition}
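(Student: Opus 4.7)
The plan is to argue by contradiction, combining the Ekeland-based construction from the proof of \cref{P1}(ii) with the appropriate subdifferential sum rule to produce a small subgradient of $\psi_p$ at an admissible point. Suppose $F$ fails to be $\al-$subregular in $x$ uniformly in $p$ over $P$ at $(\bx,\by)$ with $\de$ and $\mu$. Repeating the construction in the proof of \cref{P1}(ii), I pick $p\in P$, $x\in B_\de(\bx)$ with $d(\by,F(p,x))<\al\mu_0$ for $\mu_0:=\min\{d(x,F_p\iv(\by)),\mu\}$, a number $\eps$ with $d(\by,F(p,x))<\eps<\al\mu_0$, and $y\in F(p,x)$ with $\|y-\by\|<\eps$. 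Applying \cref{EVP} to the \lsc\ function $\psi_p$ on the complete metric space $\overline{B}_{\de+\mu}(\bx)\times\overline{B}_{\al\mu}(\by)$ endowed with the metric $d_\ga$ (for the $\ga$ furnished by \eqref{P5-1}), with parameters $\eps$ and $\mu_0$, yields $(\hat x,\hat y)$ in the open interior $B_{\de+\mu}(\bx)\times B_{\al\mu}(\by)$ with $\hat y\in F(p,\hat x)$, $\hat y\ne\by$, and $d(\hat x,F_p\iv(\by))>0$, such that $(\hat x,\hat y)$ is a local minimizer of
\begin{align*}
(u,v)\mapsto \psi_p(u,v)+(\eps/\mu_0)\|(u,v)-(\hat x,\hat y)\|_\ga,
\end{align*}
where $\|\cdot\|_\ga$ is the product norm \eqref{pnorm}. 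In particular $(\hat x,\hat y)$ satisfies \eqref{P4.1-1}.

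Now I invoke Fermat's rule: in the Clarke case, local minimality yields $0\in\sd^C[\psi_p+(\eps/\mu_0)\|\cdot-(\hat x,\hat y)\|_\ga](\hat x,\hat y)$, while in the Fréchet case (with $X$ and $Y$ Asplund) \cref{L2.3} gives the analogous inclusion with $\sd^F$. By \cref{L3}(i), the convex subdifferential at $(\hat x,\hat y)$ of the shifted norm $(\eps/\mu_0)\|\cdot-(\hat x,\hat y)\|_\ga$ equals the closed dual ball $(\eps/\mu_0)\overline{\B}^*_\ga$ associated to \eqref{dnorm}. In the Clarke case the perturbation is Lipschitz continuous, so \cref{SR}(ii) yields $0\in\sd^C\psi_p(\hat x,\hat y)+(\eps/\mu_0)\overline{\B}^*_\ga$, hence $d_\ga(0,\sd^C\psi_p(\hat x,\hat y))\le\eps/\mu_0<\al$, contradicting \eqref{P5-1}.

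In the Fréchet/Asplund case, I apply \cref{SR}(iii) with $f_1=(\eps/\mu_0)\|\cdot-(\hat x,\hat y)\|_\ga$ (Lipschitz) and $f_2=\psi_p$ (\lsc): for every $\eps'>0$ it produces points $(u,v)$ and $(u',v')$ within $\eps'$ of $(\hat x,\hat y)$ with $|\psi_p(u,v)-\psi_p(\hat x,\hat y)|<\eps'$ and a subgradient $w^*\in\sd^F\psi_p(u,v)$ satisfying $\|w^*\|_\ga\le\eps/\mu_0+\eps'$. Finiteness of $\psi_p(u,v)$ gives $(u,v)\in\gph F_p$. Taking $\eps'$ small enough ensures (a) $(u,v)\in B_{\de+\mu}(\bx)\times B_{\al\mu}(\by)$, (b) $\|v-\by\|>0$ (since $\psi_p(\hat x,\hat y)=\|\hat y-\by\|>0$), (c) $u\notin F_p\iv(\by)$ (as $F_p\iv(\by)$ is closed by closedness of $\gph F_p$, and $d(\hat x,F_p\iv(\by))>0$), and (d) $\eps/\mu_0+\eps'<\al$. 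Hence $(u,v)$ satisfies \eqref{P4.1-1} and $d_\ga(0,\sd^F\psi_p(u,v))<\al$, again contradicting \eqref{P5-1}.

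The main delicate point is the Fréchet/Asplund case, where the fuzzy sum rule yields auxiliary points displaced from $(\hat x,\hat y)$: one must verify that the displaced point $(u,v)$ still satisfies \eqref{P4.1-1}, in particular that $u\notin F_p\iv(\by)$. This is secured by the strict positivity $d(\hat x,F_p\iv(\by))>0$ inherited from the Ekeland construction together with closedness of $\gph F_p$, so a standard smallness argument in $\eps'$ closes the proof.
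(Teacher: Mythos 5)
Your argument is correct and follows essentially the same route as the paper: obtain a local minimizer of $\psi_p$ plus a small multiple of the $\ga$-norm at an admissible point, then apply Fermat's rule together with \cref{SR}(ii) in the Clarke case and the fuzzy rule \cref{SR}(iii) in the Asplund case, checking that the displaced point still satisfies \eqref{P4.1-1}. The only cosmetic difference is that you inline the Ekeland construction from \cref{P1}(ii), whereas the paper reaches the same local minimality by citing \cref{C2.2}(ii), which already packages that construction.
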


\begin{proof}
Suppose $F$ is not $\al-$subregular in $x$ uniformly in $p$ over $P$ at $(\bx,\by)$ with $\delta$ and $\mu$.
Let $\ga>0$.
By \cref{C2.2}(ii), there exist points $p\in P$, $x\in B_{\de+\mu}(\bx)$ and $y\in Y$ satisfying \eqref{P4.1-1}, and an $\al'\in]0,\al[$ such that
\begin{align*}
\|y-\by\|-\|v-\by\|\le\al'\|(u,v)-(x,y)\|_\ga
\end{align*}
for all $(u,v)\in\gph F_p\cap [B_{\de+\mu}(\bx)\times B_{\al\mu}(\by)]$ near $(x,y)$.
In other words, $(x,y)$ is a local minimizer of the function
\begin{align}\label{P5P1}
(u,v)\mapsto\psi_p(u,v)+\al'\|(u,v)-(x,y)\|_\ga.
\end{align}
By \cref{L2.3}, its \Fr\ and, as a consequence, Clarke subdifferential at this point contains $0$.
Observe that \eqref{P5P1} is the sum of the function $\psi_p$ and the Lipschitz continuous convex function $(u,v)\mapsto\al'\|(u,v)-(x,y)\|_{\ga}$, and, by \cref{L3}, at any point all subgradients $(x^*,y^*)$ of the latter function satisfy $\|(x^*,y^*)\|_{\ga}\le\al'$.
By \cref{SR}(ii), there exists a subgradient $(x^*,y^*) \in \sd^C\psi_p(x,y)$ such that $\|(x^*,y^*)\|_{\ga}\le\al'<\al$, which contradicts~\eqref{P5-1}.

Let $X$ and $Y$ be Asplund.
Choose an $\varepsilon>0$ such that
\begin{align*}
\varepsilon<\min\left\{{\de+\mu}-\|x-\bx\|, {\al\mu-\|y-\by\|},\al-\al',\|y-\by\|/2,d(x,F_p\iv(\by))/2\right\}.
\end{align*}
By \cref{SR}(iii), there exist points $x'\in B_\varepsilon({x}),y'\in B_\varepsilon({y})$ with $(x',y')\in\gph F_p$, and a subgradient $(x^*,y^*)\in \sd^F\psi_p(x',y')$ such that
\begin{align}\label{P3.1-1}
\|(x^*,y^*)\|_{\ga}<\al'+\varepsilon<\al.
\end{align}
Besides,
$x'\in{B_{\delta+\mu}(\bx)}\setminus F_p\iv(\by)$, $\by\ne y'\in B_{\al\mu}(\by)$ as
\begin{gather*}
\|y-\by\|/2<\|y'-\by\|,\quad
d(x,F_p\iv(\by))/2<d(x',F_p\iv(\by)),\\
\|x'-\bx\|\le\|x'-x\|+\|x-\bx\|<{\de+\mu},\quad
\|y'-\by\|\le\|y'-y\|+\|y-\by\|<\al\mu.
\end{gather*}
It follows from \eqref{P3.1-1} that
$d_\gamma\left(0,\sd^F\psi_p{(x',y')}\right)<\al$, which contradicts \eqref{P5-1}.
\end{proof}

\begin{remark}\label{R3.01}
Condition \eqref{P5-1} with the \Fr\ subdifferentials is obviously weaker (hence, more efficient) than its version with the Clarke ones.
However, it is only applicable in Asplund spaces.
\sloppy
\end{remark}

The key condition \eqref{P5-1} in \cref{P5} involves subdifferentials of the function $\psi_p$.
Subgradients of this function belong to $X^*\times Y^*$ and have two component vectors $x^*$ and $y^*$.
In view  of the  representation \eqref{dnorm} of the dual norm on $X^*\times Y^*$, the contributions of the vectors $x^*$ and $y^*$ to the condition \eqref{P5-1} are different.
The next corollary exposes this difference.

\begin{corollary}\label{C7}
If
there exists an $\eps>0$ such that
$\|x^*\|\ge\al$
for all $p\in P$, $x\in B_{\de+\mu}(\bx)$ and $y\in Y$ satisfying \eqref{P4.1-1}, and all
$(x^*,y^*)\in \sd^C\psi_p(x,y)$ with $\|y^*\|<\eps$;
particularly~if
\begin{align*}
\liminf_{\substack{F_p\iv(\by)\not\ni x\to\bx,\,
F(p,x)\ni y\to\by,\,y^*\to0\\
p\in P,\,y\ne\by,\, (x^*,y^*)\in\sd^C\psi_p(x,y)}}\|x^*\|>\al,
\end{align*}
then $F$ is $\al-$subregular in $x$ uniformly in $p$ over $P$ at $(\bx,\by)$ with $\delta$ and $\mu$.

If $X$ and $Y$ are Asplund, then the above assertion is valid with $\sd^F$ in place of $\sd^C$.
\end{corollary}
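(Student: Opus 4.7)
The strategy is to derive the subdifferential condition \eqref{P5-1} of \cref{P5} from the hypothesis, for a carefully chosen weight $\ga>0$, and then invoke \cref{P5}. The key observation is that the dual parametric norm \eqref{dnorm} decomposes as $\|(x^*,y^*)\|_\ga=\|x^*\|+\ga\iv\|y^*\|$, which splits naturally according to whether $\|y^*\|$ is small or large. The weight $\ga$ can be calibrated so that both cases furnish the required lower bound $\al$.

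For the first (non-limit) clause, I would set $\ga:=\eps/\al$ and fix arbitrary $p\in P$, $x\in B_{\de+\mu}(\bx)$, $y\in Y$ satisfying \eqref{P4.1-1}, and any $(x^*,y^*)\in\sd^C\psi_p(x,y)$. If $\|y^*\|<\eps$, the hypothesis gives $\|x^*\|\ge\al$, and therefore $\|(x^*,y^*)\|_\ga\ge\al$. If instead $\|y^*\|\ge\eps$, then $\ga\iv\|y^*\|\ge\ga\iv\eps=\al$, so once again $\|(x^*,y^*)\|_\ga\ge\al$. Taking the infimum over all subgradients (the case of an empty subdifferential being trivial) yields \eqref{P5-1}, and \cref{P5} produces the claimed uniform $\al-$sub\-regularity with $\de$ and $\mu$. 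Replacing $\sd^C$ with $\sd^F$ throughout and invoking the Asplund part of \cref{P5} settles the second assertion.

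For the ``particularly if'' clause, I would unpack the definition of $\liminf$: pick any $\al'$ with $\al<\al'<\liminf$; then there exist $\de'>0$ and $\eps>0$ such that $\|x^*\|>\al'$ whenever $p\in P$, $x\in B_{\de'}(\bx)\setminus F_p\iv(\by)$, $y\in F(p,x)\cap B_{\de'}(\by)$ with $y\ne\by$, and $(x^*,y^*)\in\sd^C\psi_p(x,y)$ with $\|y^*\|<\eps$. The condition $y\ne\by$ is automatic since $x\notin F_p\iv(\by)$ forces $\by\notin F(p,x)$. By choosing $\de,\mu>0$ small enough so that $\de+\mu\le\de'$ and $\al\mu\le\de'$, the first part above delivers $\al'-$subregularity with $\de,\mu$, and since $\al'>\al$ this entails $\al-$subregularity with the same $\de,\mu$.

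I do not anticipate any real obstacle; the argument is essentially a bookkeeping exercise, the only subtlety being the choice $\ga:=\eps/\al$ that precisely balances the two cases in the decomposition of the dual norm \eqref{dnorm}. No tools beyond \cref{P5} and the definition of $\liminf$ are required.
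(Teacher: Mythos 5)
Your argument is correct and matches the paper's proof in all essentials: both hinge on the choice $\ga:=\eps/\al$, the decomposition of the dual norm \eqref{dnorm} into the cases $\|y^*\|<\eps$ and $\|y^*\|\ge\eps$, and an appeal to \cref{P5}; the paper merely phrases it contrapositively (assuming failure of subregularity and extracting a subgradient with $\|x^*\|<\al$ and $\|y^*\|<\eps$), whereas you verify \eqref{P5-1} directly. Your handling of the ``particularly if'' clause via unpacking the $\liminf$ is also the intended (and in the paper unwritten) reduction, with the understood caveat that it yields the conclusion for sufficiently small $\de$ and $\mu$.
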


\begin{proof}
Suppose $F$ is not $\al-$subregular in $x$ uniformly in $p$ over $P$ at $(\bx,\by)$ with $\delta$ and $\mu$.
Let $\eps>0$ and $\ga:=\eps/\al$.
By \cref{P5}, there exist $p\in P$, $x\in B_{\de+\mu}(\bx)$ and $y\in Y$ satisfying \eqref{P4.1-1}, and a subgradient $(x^*,y^*)\in \sd^{C}\psi_p(x,y)$ ($(x^*,y^*)\in \sd^{F}\psi_p(x,y)$ if $X$ and $Y$ are Asplund)
such that $\|(x^*,y^*)\|_{\ga}<\al$.
In view of the representation of the dual norm \eqref{dnorm}, this implies $\|x^*\|<\al$ and $\|y^*\|<\al\ga=\eps$, a contradiction.

\end{proof}

The function $\psi_p$ involved in the subdifferential sufficient conditions for the uniform $\al-$subregularity in \cref{P5}, is itself a sum of two functions.
We are now going to apply the sum rules again to obtain sufficient conditions in terms of Clarke and Fr\'echet normals to $\gph F_p$.

\begin{theorem}\label{T2}
The mapping $F$ is $\al-$subregular in $x$ uniformly in $p$ over $P$ at $(\bx,\by)$ with $\delta$ and $\mu$ if, for some $ \ga>0$, and all $p\in P$, $x\in B_{\de+\mu}(\bx)$ and $y\in Y$ satisfying \eqref{P4.1-1}, one of the following conditions is satisfied:
\begin{enumerate}
\item
{with $N:=N^C$,}
\begin{align}\label{T2-1}
d_\ga((0,-y^*),N_{\gph F_p}(x,y))\ge\al
\end{align}
for all $y^*\in Y^*$ satisfying
\begin{align}\label{T1-1}
\|y^*\|=1,\quad\langle y^*,y-\by\rangle=\|y-\by\|;
\end{align}

\item
$X$ and $Y$ are Asplund, and there exists a $\tau\in]0,1[$ such that inequality \eqref{T2-1} holds with $N:=N^F$ for all  $y^*\in Y^*$ satisfying
\begin{align}\label{T1-2}
\|y^*\|=1,\quad \langle y^*,y-\by\rangle>\tau\|y-\by\|.
\end{align}
\end{enumerate}
\end{theorem}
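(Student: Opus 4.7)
The natural strategy is to reduce the statement to Proposition~\ref{P5} by decomposing the subdifferential $\sd\psi_p(x,y)$ via a sum rule applied to $\psi_p=f_1+f_2$, where $f_1(u,v):=d(v,\by)=\|v-\by\|$ (Lipschitz continuous and convex) and $f_2(u,v):=i_{\gph F_p}(u,v)$ (lower semicontinuous, whose subdifferential is $N_{\gph F_p}$). Thus, it suffices to prove that either normal cone condition implies $d_\ga\left(0,\sd\psi_p(x,y)\right)\ge\al$ for the respective choice of $\sd$, and then invoke \cref{P5}.

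For part (i), I fix $p\in P$, $x\in B_{\de+\mu}(\bx)$ and $y\in Y$ satisfying \eqref{P4.1-1}, and pick an arbitrary $(x^*,y^*)\in\sd^C\psi_p(x,y)$. Applying \cref{SR}(ii) yields a decomposition $(x^*,y^*)=(0,\tilde y^*)+(a^*,b^*)$ with $(0,\tilde y^*)\in\sd^C f_1(x,y)=\{0\}\times\sd\|\cdot-\by\|(y)$ and $(a^*,b^*)\in N^C_{\gph F_p}(x,y)$. Since $y\ne\by$, \cref{L3}(ii) guarantees that $\tilde y^*$ satisfies \eqref{T1-1}. Consequently $(x^*,y^*-\tilde y^*)\in N^C_{\gph F_p}(x,y)$, and
\begin{align*}
d_\ga\big((0,-\tilde y^*),N^C_{\gph F_p}(x,y)\big)\le\left\|(x^*,y^*-\tilde y^*)-(0,-\tilde y^*)\right\|_\ga=\|(x^*,y^*)\|_\ga.
\end{align*}
By \eqref{T2-1} with this $\tilde y^*$, the left-hand side is at least $\al$, so $\|(x^*,y^*)\|_\ga\ge\al$. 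Taking the infimum over $(x^*,y^*)\in\sd^C\psi_p(x,y)$ gives \eqref{P5-1}, and \cref{P5} completes part~(i).

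Part (ii) follows the same pattern, but using the fuzzy sum rule \cref{SR}(iii); the main obstacle is that the resulting normal cone element lives at a perturbed point $(x_2,y_2)$ rather than at $(x,y)$, which forces the passage from the exact condition \eqref{T1-1} to the relaxed one \eqref{T1-2}. Arguing by contradiction, assume $F$ is not $\al$-subregular. By \cref{P5} (contrapositive, with the Fr\'echet subdifferential), there exist $p\in P$, $x\in B_{\de+\mu}(\bx)$, $y\in Y$ satisfying \eqref{P4.1-1}, an $\al'\in]0,\al[$, and $(x^*,y^*)\in\sd^F\psi_p(x,y)$ with $\|(x^*,y^*)\|_\ga<\al'$. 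Fix $\eps>0$ small enough so that
\begin{align*}
\eps<\min\big\{\tfrac{(1-\tau)\|y-\by\|}{4},\,\de+\mu-\|x-\bx\|,\,\al\mu-\|y-\by\|,\,\tfrac{d(x,F_p\iv(\by))}{2},\,\al-\al'\big\}
\end{align*}
(with appropriate scaling by $\ga$ in the slots corresponding to $Y$). Applying \cref{SR}(iii) yields points $(x_1,y_1),(x_2,y_2)$ within distance $\eps$ of $(x,y)$, with $|f_i(x_i,y_i)-f_i(x,y)|<\eps$, an element $\tilde y^*\in\sd\|\cdot-\by\|(y_1)$ (so $\|\tilde y^*\|=1$ and $\langle\tilde y^*,y_1-\by\rangle=\|y_1-\by\|$ after using \cref{L3}(ii), since $y_1\ne\by$ by the first bound on $\eps$), a pair $(a^*,b^*)\in N^F_{\gph F_p}(x_2,y_2)$, and a remainder of norm less than~$\eps$, summing to $(x^*,y^*)$.

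The key verifications are then: (a) $(x_2,y_2)\in\gph F_p$ with $x_2\in B_{\de+\mu}(\bx)\setminus F_p\iv(\by)$ and $y_2\in B_{\al\mu}(\by)$, which follow from the bounds on $\eps$ exactly as in the proof of \cref{P5}; (b) the functional $\tilde y^*$ satisfies \eqref{T1-2} at the point $y_2$, which reduces to the elementary estimate $\langle\tilde y^*,y_2-\by\rangle\ge\|y_1-\by\|-2\eps$ combined with $\|y_2-\by\|\le\|y_1-\by\|+2\eps$ and the choice $\eps<(1-\tau)\|y-\by\|/4$; (c) the distance estimate
\begin{align*}
d_\ga\big((0,-\tilde y^*),N^F_{\gph F_p}(x_2,y_2)\big)\le\|(a^*,b^*)-(0,-\tilde y^*)\|_\ga\le\|(x^*,y^*)\|_\ga+\eps<\al'+\eps<\al,
\end{align*}
which contradicts \eqref{T2-1} at $(x_2,y_2)$ with the chosen $\tilde y^*$. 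The hard part is calibrating $\eps$ so that all three conclusions (membership in the right neighbourhoods, the inequality~\eqref{T1-2}, and the strict bound by $\al$) hold simultaneously; the role of the slack $\tau<1$ is precisely to absorb the unavoidable error produced by the fuzzy sum rule.
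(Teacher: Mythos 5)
Your proposal is correct and follows essentially the same route as the paper's proof: both parts reduce to \cref{P5} by splitting $\psi_p$ into the Lipschitz convex norm term and the indicator of $\gph F_p$, using \cref{SR}(ii) with \cref{L3}(ii) for the Clarke case and the fuzzy rule \cref{SR}(iii) for the Asplund case, with the slack $\tau<1$ absorbing the perturbation of the base point exactly as in the paper (your calibration $\eps<(1-\tau)\|y-\by\|/4$ is a valid variant of the paper's conditions $\|y_1-\by\|\ge\tfrac12\|y-\by\|$ and $\|y_2-y_1\|<\tfrac{1-\tau}{4}\|y-\by\|$). The only cosmetic difference is that you argue part (i) directly by taking the infimum over subgradients, whereas the paper runs both parts by contradiction.
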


\if{
\begin{proof}
Recall from \eqref{psi} that $\psi_p$ is a sum of two functions: the Lipschitz continuous convex function $v\mapsto g(v):=\|v-\by\|$ and the indicator function of the closed set $\gph F_p$.
\begin{enumerate}
\item
By \cref{SR}(ii), $\sd^C\psi_p(x,y)\subset\{0\}\times\sd g(y) +N^C_{\gph F_p}(x,y)$.
Since $y\ne\by$,
by \cref{L3}(ii), $\sd g(y)$ is a set of all $y^*\in Y^*$ satisfying \eqref{T1-1}.
Hence, condition (i) implies \eqref{P5-1} with $\sd:=\sd^C$.
\sloppy

\item
Let $X$ and $Y$ be Asplund, and $\tau\in]0,1[$.
By \cref{SR}(iii), if $(u^*,v^*)\in\sd^F\psi_p(x,y)$, then, for any $\varepsilon>0$, there exist $x_1\in B_\varepsilon({x})$, $y_1,y_2\in B_\varepsilon(y)$ with $(x_1,y_1)\in\gph F_p$, such that $(u^*,v^*)\in\{0\}\times\sd g(y_2) +N^F_{\gph F_p}(x_1,y_1)+\eps\B_{X^*\times Y^*}$.
\end{enumerate}
\end{proof}
}\fi

\begin{proof}
Suppose $F$ is not $\al-$subregular in $x$ uniformly in $p$ over $P$ at $(\bx,\by)$ with $\delta$ and $\mu$.
Let $\ga>0$.
In view of \cref{P5},
there exist $p\in P$, $x\in B_{\de+\mu}(\bx)$ and $y\in Y$ satisfying \eqref{P4.1-1}, and a subgradient
$(\hat x^*,\hat y^*)\in\sd\psi_p(x,y)$ such that
$\|(\hat x^*,\hat y^*)\|_{\ga}<\al$, where either $\sd:=\sd^C$ (if $X$ and $Y$ are general Banach spaces) or ${\sd:=\sd^F}$ (if $X$ and $Y$ are Asplund).
Recall from \eqref{psi} that $\psi_p$ is a sum of two functions: the Lipschitz continuous convex function $v\mapsto g(v):=\|v-\by\|$ and the indicator function of the closed set $\gph F_p$.
\begin{enumerate}
\item
By \cref{SR}(ii), there exist  $y^*\in \sd g(y)$ and $(u^*,v^*)\in N^C_{\gph F_p}(x,y)$  such that
$(\hat x^*,\hat y^*)=(0,y^*)+(u^*,v^*)$.
Thus,
\begin{align*}
d_\ga((0,-y^*),N^C_{\gph F_p} (x,y))\le\|(0,y^*)+(u^*,v^*)\|_\ga=\|(\hat x^*,\hat y^*)\|_{\ga}<\al,
\end{align*}
which contradicts \eqref{T2-1}.
Since $y\ne\by$,
by \cref{L3}, $y^*$ satisfies conditions \eqref{T1-1}.

\item
Let $X$ and $Y$ be Asplund, and $\tau\in]0,1[$.
By \cref{SR}(iii), for any $\varepsilon>0$, there exist $x_1\in B_\varepsilon({x})$, $y_1,y_2\in B_\varepsilon(y)$ with $(x_1,y_1)\in\gph F_p$, and $y^*\in\sd g(y_2)$, $(u^*,v^*)\in N^F_{\gph F_p}(x_1,y_1)$ such that
\begin{align}\label{T2-7}
\|(0,y^*)+(u^*,v^*)-(\hat x^*,\hat y^*)\|_{\ga}<\eps.
\end{align}
The number $\eps$ can be chosen small enough to ensure that $x_1\in B_{\de+\mu}(\bx)\setminus F_p\iv(\by)$,
$y_1\in B_{\al\mu}(\by)$, $y_2\ne\by$, and
\begin{gather*}
\|y_1-\by\|\ge\frac{1}{2}\|y-\by\|,\;
\|y_2-y_1\|<\frac{1-\tau}{4}\|y-\by\|,\;
\|(\hat x^*,\hat y^*)\|_{\ga}+\eps<\al.
\end{gather*}
By \cref{L3}, we have
$\|y^*\|=1$ and $\langle y^*,y_2-\by\rangle=\|y_2-\by\|$.
Moreover,
\begin{gather*}
\|y_2-y_1\|<\frac{1-\tau}{4}\|y-\by\|\le\frac{1-\tau}{2} \|y_1-\by\|,
\end{gather*}
and consequently,
\begin{align*}
\langle y^*,y_1-\by\rangle
&\ge\langle y^*,y_2-\by\rangle-\|y_2-y_1\|=\|y_2-\by\|-\|y_2-y_1\|\\
&\ge\|y_1-\by\|-2\|y_2-y_1\|>\tau\|y_1-\by\|.
\end{align*}
Making use of \eqref{T2-7}, we obtain
\begin{align*}
d_\ga((0,-y^*),N^F_{\gph F_p} (x_1,y_1))
\le \|(0, y^*)+(u^*,v^*)\|_\ga
<\|(\hat x^*,\hat y^*)\|_{\ga}+\varepsilon<\al.
\end{align*}
This contradicts \eqref{T2-1}.
\end{enumerate}
\end{proof}

\begin{remark}\label{R3.02}
\begin{enumerate}
\item
Condition \eqref{T2-1} with the \Fr\ normal cones is obviously weaker (hence, more efficient) than its version with the Clarke ones; cf. \cref{R3.01}.
However, the Asplund space sufficient condition for uniform $\al-$subregularity in part (ii) of \cref{T2} is not necessarily weaker than its general Banach space version in part~(i), as it replaces the equality in \eqref{T1-1} with a less restrictive inequality in \eqref{T1-2}, which involves an additional parameter $\tau$.
Of course, $\tau$ can be chosen arbitrarily close to 1 making the difference between the constraints \eqref{T1-1} and \eqref{T1-2} less significant.
The weaker than \eqref{T1-1} conditions \eqref{T1-2} employed in part (ii) of \cref{T2} are due to the approximate subdifferential sum rule (\cref{SR}(iii)) used in its proof.

\item
The following alternative sufficient condition has been established half way within the proof of part (ii) of \cref{T2}:
\smallskip

\emph{$X$ and $Y$ are Asplund, and, given any $\eps>0$, inequality \eqref{T2-1} holds with $N:=N^F$ for all $v\in B_\eps(y)$ and all $y^*\in Y^*$ satisfying \eqref{T1-1} with $v$ in place of $y$.}
\smallskip

It employs the stronger equality conditions \eqref{T1-1} instead of \eqref{T1-2}, but involves an unknown vector $v$ (arbitrarily close to $y$).
Conditions of this type are used by some authors, but we prefer more explicit ones in \cref{T2}(ii) and the statements derived from it.
\end{enumerate}
\end{remark}

The qualitative sufficient conditions for uniform regularity follow immediately.

\begin{corollary}\label{C5.3}
The mapping $F$ is subregular in $x$ uniformly in $p$ over $P$ at $(\bx,\by)$ if one of the following conditions is satisfied:
\begin{enumerate}
\item
$\ds\sup_{\ga>0}\liminf_{\substack{F_p\iv(\by)\not\ni x\to\bx,\,
F(p,x)\ni y\to\by\\p\in P,\,y\ne\by,\,\|y^*\|=1,\,
\langle y^*,y-\by\rangle =\|y-\by\|}} d_\ga((0,-y^*),N^C_{\gph F_p}(x,y))>0$;
\smallskip
\item
$X$ and $Y$ are Asplund, and\\
$\ds\sup_{\ga>0,\tau\in]0,1[} \liminf_{\substack{F_p\iv(\by)\not\ni x\to\bx,\,
F(p,x)\ni y\to\by\\p\in P,\,y\ne\by,\,\|y^*\|=1,\,
\langle y^*,y-\by\rangle>\tau\|y-\by\|}}d_\ga((0,-y^*),N^F_{\gph F_p} (x,y))>0$.
\end{enumerate}
\end{corollary}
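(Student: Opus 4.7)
The plan is to derive each part directly from \cref{T2} by unpacking the suprema and liminf.

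For part (i), suppose the supremum in question is strictly positive. By the definition of supremum, I can fix $\gamma>0$ making the corresponding liminf strictly positive, and then choose some $\alpha>0$ smaller than that liminf. By the definition of liminf, there exists $\eta>0$ such that
\begin{align*}
d_\gamma((0,-y^*),N^C_{\gph F_p}(x,y))\ge\alpha
\end{align*}
for every $p\in P$, every $x\in B_\eta(\bar x)\setminus F_p^{-1}(\bar y)$, every $y\in F(p,x)\cap B_\eta(\bar y)$ with $y\ne\bar y$, and every $y^*\in Y^*$ satisfying $\|y^*\|=1$ and $\langle y^*,y-\bar y\rangle=\|y-\bar y\|$. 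Now pick $\delta,\mu\in\,]0,+\infty[$ small enough that $\delta+\mu\le\eta$ and $\alpha\mu\le\eta$; then whenever $p\in P$, $x\in B_{\delta+\mu}(\bar x)$, and $y$ together with $p,x$ satisfy \eqref{P4.1-1}, one has $x\in B_\eta(\bar x)$ and $y\in B_\eta(\bar y)\setminus\{\bar y\}$, so the hypothesis of \cref{T2}(i) is fulfilled with these parameters. The theorem yields $\alpha$-subregularity of $F$ in $x$ uniformly in $p$ over $P$ at $(\bar x,\bar y)$ with $\delta$ and $\mu$, hence subregularity.

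Part (ii) is proved by the same scheme with two cosmetic changes: the supremum ranges over pairs $(\gamma,\tau)$ with $\gamma>0$ and $\tau\in\,]0,1[$, so I additionally extract such a $\tau$; and the constraint on $y^*$ in the liminf is replaced by $\|y^*\|=1$ and $\langle y^*,y-\bar y\rangle>\tau\|y-\bar y\|$. After producing $\eta>0$ and choosing $\delta,\mu$ exactly as in part (i), I invoke \cref{T2}(ii) in place of \cref{T2}(i) to conclude.

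I expect no substantial obstacle. The only piece of bookkeeping is the translation of the single neighborhood radius $\eta$ furnished by the liminf into the two separate radii $\delta$ and $\mu$ demanded by \cref{T2}. This is the step that might superficially look delicate, but is entirely mechanical once one observes that both the constraint $x\in B_{\delta+\mu}(\bar x)$ and the constraint $y\in B_{\alpha\mu}(\bar y)$ built into \eqref{P4.1-1} can be forced to lie inside $B_\eta(\bar x)$ and $B_\eta(\bar y)$, respectively, by shrinking $\delta$ and $\mu$.
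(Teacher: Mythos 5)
Your proposal is correct and matches the paper's (essentially omitted) argument: the corollary is stated as an immediate consequence of \cref{T2}, obtained exactly by fixing $\ga$ (and $\tau$), taking $\al$ below the positive liminf to get a radius $\eta$, and shrinking $\de,\mu$ so that $B_{\de+\mu}(\bx)$ and $B_{\al\mu}(\by)$ sit inside the $\eta$-balls. Your bookkeeping, including the implicit observation that \eqref{P4.1-1} forces $y\ne\by$, is exactly what the paper intends.
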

\if{
\begin{proof}
We provide the proof for item (i).
The one of (ii) is analogous.
Suppose condition \eqref{C9-2} is satisfied, i.e. there exist $\ga,\al,\eta\in]0,+\infty]$ such that for all $p\in B_\eta(\bar p),x\in B_{\eta}(\bx)\setminus F_p\iv(\by)$, $y\in F(p,x)$ with $0<\|y\|<\eta$, and all $y^*\in Y^*$ satisfying
\eqref{T1-1}, it holds
\begin{align}\label{T2-4}
d_\ga((0,-y^*),N^C_{\gph F_p}(x,y))>\al,
\end{align}
and $\gph F_p\cap[B_{\eta}(\bx)\times B_{\eta}(0)]$ is closed for all $p\in B_{\eta}(\bar p)$.
Choose $\mu,\delta\in]0,+\infty]$ such that $\max\{\de,\al\mu,\de+\mu\}:=\eta$,
inequality \eqref{T2-4} holds for all
$p\in B_\de(\bar p),x\in B_{\de}(\bx)\setminus F_p\iv(\by)$, $y\in F(p,x)$ with $0<\|y\|<\al\mu$, and
$\gph F_p\cap[B_{\de+\mu}(\bx)\times B_{\al\mu}(0)]$ is closed for all $p\in B_{\de}(\bar p)$.
In view of \cref{T2}, $F$ is $\al-$subregular at $\bx$ with $\de$ and $\mu$.
\end{proof}
}\fi

\if{
The conditions in \cref{C5.3} employs two assumptions: the closedness of $\gph F_p$, and either condition (i) or condition (ii).
The next example shows that either condition (i) or condition (ii) cannot be dropped.
}\fi

The next example illustrates the sufficient conditions for subregularity in
\cref{C5.3}.

\begin{example}
Let $P=X=Y:=\R$,
$F(p,x):=\{(p-x)^2\}$
for all $p\in P$ and $x\in X$,
and let $\by:=0$.
By \eqref{G}, $F_p\iv(\by)=\{p\}$.
Thus, $d(x,F_p\iv(\by))=|x-p|$
and $d(\by,F(p,x))=(x-p)^2$
for all $p\in P$ and $x\in X$.
Hence, for any $\al>0$ and $p\in P$, inequality \eqref{D1.2-2} is violated when $x$ sufficiently close to $\bx:=0$, i.e.
the mapping $F$ is not {subregular in $x$ uniformly in $p$ over $P$ at $(\bx,\by)$}.
Observe that
$\gph F_p$ is closed for all $p\in P$, and, for any $p\in P$ and $(x,y)\in\gph F_p$,
\begin{align*}
(2(x-p),-1)\in N^C_{\gph F_p}(x,y)=N^F_{\gph F_p}(x,y).
\end{align*}
Let $p=0$, $x\ne0$, $y=x^2$, and $y^*\in\R$ satisfy \eqref{T1-1} {or \eqref{T1-2}}, hence, $y^*=1$, and, for any $\ga>0$, $d_\ga((0,-y^*),(2x,-1))=2|x|\to0$ as $x\downarrow0$.
Both inequalities in \cref{C5.3} are not satisfied.
\end{example}

\cref{T2} yields sufficient conditions for uniform $\al-$subregularity in terms of coderivatives.

\begin{corollary}\label{C3.3}
The mapping $F$ is $\al-$subregular in $x$ uniformly in $p$ over $P$ at $(\bx,\by)$ with $\delta$ and $\mu$ if, for some $\eta\in]0,+\infty]$, and all $p\in P$, $x\in B_{\de+\mu}(\bx)$ and $y\in Y$ satisfying \eqref{P4.1-1}, one of the following conditions is satisfied:
\begin{enumerate}
\item
with $D^*:=D^*_C$, for all $y^*\in Y^*$ satisfying \eqref{T1-1},
it holds
\begin{align}\label{C3.3-1}
d(0,D^*F_p(x,y)(B_{\eta}(y^*)))\ge\al;
\end{align}

\item
$X$ and $Y$ are Asplund, and there exists a $\tau\in]0,1[$ such that inequality \eqref{C3.3-1} holds with $D^*:=D^*_F$ for all $y^*\in Y^*$ satisfying \eqref{T1-2}.
\end{enumerate}
\end{corollary}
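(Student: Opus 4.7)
The plan is to derive the normal-cone sufficient conditions of \cref{T2} from the coderivative hypothesis \eqref{C3.3-1} and then invoke \cref{T2}; the bridge is simply the definition \eqref{coder} of the coderivative in terms of the normal cone. I focus on part (i); part (ii) is identical, with $\sd^F$ and $D^*_F$ replacing $\sd^C$ and $D^*_C$, the constraint \eqref{T1-2} replacing \eqref{T1-1}, and \cref{T2}(ii) invoked at the end.

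Fix $\eta>0$ for which \eqref{C3.3-1} holds and set $\ga:=\eta/\al$. Fix $p\in P$, $x\in B_{\de+\mu}(\bx)$, $y\in Y$ satisfying \eqref{P4.1-1}, and $y^*$ satisfying \eqref{T1-1}. To estimate $d_\ga((0,-y^*),N^C_{\gph F_p}(x,y))$, pick any $(x^*,-w^*)\in N^C_{\gph F_p}(x,y)$; by \eqref{coder}, $x^*\in D^*_C F_p(x,y)(w^*)$. If $w^*\in B_\eta(y^*)$, then the hypothesis gives $\|x^*\|\ge\al$; if $w^*\notin B_\eta(y^*)$, then $\ga\iv\|w^*-y^*\|\ge\ga\iv\eta=\al$. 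In either case, using \eqref{dnorm},
\begin{align*}
\|(x^*,-w^*)-(0,-y^*)\|_\ga=\|x^*\|+\ga\iv\|w^*-y^*\|\ge\al.
\end{align*}
Taking the infimum over all such $(x^*,-w^*)$ yields \eqref{T2-1} with $N:=N^C$, and \cref{T2}(i) then delivers the required $\al$-subregularity. The edge case $D^*_C F_p(x,y)(B_\eta(y^*))=\es$ is absorbed by the convention $d(0,\es)=+\infty$: only the second horn of the dichotomy can then contribute to the infimum.

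I expect no real obstacle: the argument is essentially the bookkeeping that balances the coderivative enlargement $B_\eta(y^*)$ against the two components of the parametric dual norm \eqref{dnorm}, and the choice $\ga=\eta/\al$ is made precisely so that the `outside' contribution $\ga\iv\|w^*-y^*\|$ already meets the target bound $\al$ by itself, while the `inside' contribution is controlled directly by \eqref{C3.3-1}.
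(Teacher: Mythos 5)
Your proof is correct and follows essentially the same route as the paper: the same choice $\ga=\eta/\al$ (the paper writes $\ga:=\al\iv\eta$), the same translation of \eqref{T2-1} via \eqref{coder} and \eqref{dnorm} into the inequality $\|x^*\|+\ga\iv\|w^*-y^*\|\ge\al$, and the same dichotomy according to whether $w^*$ lies in $B_\eta(y^*)$, followed by an appeal to \cref{T2}. The only difference is presentational — the paper states the equivalence in one line rather than spelling out the two cases and the empty-image convention.
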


\begin{proof}
Given an $\eta\in]0,+\infty]$, set $\ga:=\al\iv\eta$.
In view of the representations \eqref{coder} of the coderivative and \eqref{dnorm} of the dual norm, condition \eqref{T2-1} means that $\|u^*\|+\ga\iv{\|v^*-y^*\|}\ge\al$ for all $v^*\in Y^*$ and ${u^*\in D^*F_p(x,y)(v^*)}$.
The last inequality is obviously satisfied if either $\|u^*\|\ge\al$ or $\|v^*-y^*\|\ge\eta$, or equivalently, if $\|u^*\|\ge\al$ when $v^*\in B_{\eta}(y^*)$.
\sloppy
\end{proof}

The coderivative sufficient condition \eqref{C3.3-1} can be replaced by its `normalized' (and a little stronger!) version.

\begin{corollary}\label{C3.4}
The mapping $F$ is $\al-$subregular in $x$ uniformly in $p$ over $P$ at $(\bx,\by)$ with $\delta$ and $\mu$ if, for some $\eta\in]0,1[$, and all $p\in P$, $x\in B_{\de+\mu}(\bx)$ and $y\in Y$ satisfying \eqref{P4.1-1}, one of the following conditions is satisfied:
\begin{enumerate}
\item
{with $D^*:=D^*_C$},
\begin{align}\label{C3.4-1}
d\left(0,D^*F_p(x,y)\left(\frac{v^*}{\|v^*\|}\right) \right) \ge\frac{\al}{1-\eta}
\end{align}
for all $y^*\in Y^*$ satisfying \eqref{T1-1} and $v^*\in B_{\eta}(y^*)$;

\item
$X$ and $Y$ are Asplund, and there exists a $\tau\in]0,1[$ such that inequality \eqref{C3.4-1} holds with $D^*:=D^*_F$ for all $y^*\in Y^*$ satisfying \eqref{T1-2} and $v^*\in B_{\eta}(y^*)$.
\end{enumerate}
\end{corollary}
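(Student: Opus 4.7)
The plan is to derive this corollary directly from \cref{C3.3} by exploiting the positive homogeneity of the Fr\'echet and Clarke coderivatives and a simple normalization argument. Both coderivatives are positively homogeneous in the second argument since the underlying normal cones are cones: for $\la>0$ and any $v^*\in Y^*$,
\begin{align*}
D^*F_p(x,y)(\la v^*)=\la D^*F_p(x,y)(v^*).
\end{align*}
Consequently, whenever $v^*\ne0$, we have
\begin{align*}
d(0,D^*F_p(x,y)(v^*))=\|v^*\|\cdot d\left(0,D^*F_p(x,y)\left(\frac{v^*}{\|v^*\|}\right)\right).
\end{align*}

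Next, I would observe that the condition $\eta\in\,]0,1[$ and the normalization $\|y^*\|=1$ from \eqref{T1-1} (or \eqref{T1-2}) force every $v^*\in B_\eta(y^*)$ to satisfy $\|v^*\|>1-\eta>0$, so the unit vector $v^*/\|v^*\|$ is well defined. Applying the standing hypothesis \eqref{C3.4-1} to this unit vector yields
\begin{align*}
d(0,D^*F_p(x,y)(v^*))\ge\|v^*\|\cdot\frac{\al}{1-\eta}>\al,
\end{align*}
valid for every $v^*\in B_\eta(y^*)$.

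Taking the infimum over all such $v^*$, I obtain $d(0,D^*F_p(x,y)(B_\eta(y^*)))\ge\al$, which is precisely condition \eqref{C3.3-1} of \cref{C3.3}(i) (respectively \cref{C3.3}(ii) in the Asplund setting, since the constraint on $y^*$ used in C3.4 matches the one used in C3.3 for each case). The conclusion $F$ is $\al$-subregular in $x$ uniformly in $p$ over $P$ at $(\bx,\by)$ with $\de$ and $\mu$ then follows immediately from \cref{C3.3}. Nothing genuinely new needs to be proved; the only routine point to verify is that the strict inequality $\|v^*\|>1-\eta$ (obtained from the openness of $B_\eta(y^*)$) together with $\|v^*\|\al/(1-\eta)>\al$ survives passage to the infimum as the weak inequality $\ge\al$ required by \cref{C3.3}. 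There is no real obstacle; the only subtlety is keeping the two parameter settings (general Banach with \eqref{T1-1} and Asplund with \eqref{T1-2}) straight, and making sure the normalization $v^*/\|v^*\|$ is never applied to the zero vector, which is guaranteed by $\eta<1$.
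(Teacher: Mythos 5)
Your proposal is correct and follows essentially the same route as the paper: both reduce \cref{C3.4} to \cref{C3.3} by observing that $\|v^*\|>1-\eta>0$ for $v^*\in B_\eta(y^*)$ and using the positive homogeneity of the coderivative (the paper phrases it as $u^*/\|v^*\|\in D^*F_p(x,y)(v^*/\|v^*\|)$ for individual elements, you phrase it as a scaling identity for distances) to pass from \eqref{C3.4-1} to \eqref{C3.3-1}. No gaps.
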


\begin{proof}
Let $\eta\in]0,1[$, $p\in P$, $x\in B_{\de+\mu}(\bx)$ and $y\in Y$ satisfy \eqref{P4.1-1}, and $y^*\in Y^*$ satisfy either \eqref{T1-1} or \eqref{T1-2}.
We need to show that, if inequality \eqref{C3.4-1} holds for all $v^*\in B_{\eta}(y^*)$, then inequality \eqref{C3.3-1} holds.
First note that, in view of \eqref{T1-1} or \eqref{T1-2}, $\|y^*\|=1$.
Let $v^*\in B_{\eta}(y^*)$ and $u^*\in D^*F_p(x,y)(v^*)$.
Then $\|v^*\|>1-\eta\in]0,+\infty]$.
Thus, condition \eqref{C3.4-1} is well defined.
Moreover, $u^*/\|v^*\|\in D^*F_p(x,y)(v^*/\|v^*\|)$ and, {in view of \eqref{C3.4-1}}, $\|u^*\|\ge\al\|v^*\|/(1-\eta)>\al$, i.e. inequality \eqref{C3.3-1} holds.
\end{proof}

The next qualitative assertion is an immediate consequence of \cref{C3.3}.

\begin{corollary}\label{C3.7}
The mapping $F$ is subregular in $x$ uniformly in $p$ over $P$ at $(\bx,\by)$ if one of the following conditions is satisfied:
\begin{enumerate}
\item
$\ds\lim_{\de\downarrow0}\;
\inf_{\substack{x\in B_{\de}(\bx)\setminus F_p\iv(\by),\,
\by\ne y\in F(p,x)\cap B_{\de}(\by)\\p\in P,\, \|y^*\|=1,\,\langle y^*,y-\by\rangle=\|y-\by\|}} d(0,D^*_CF_p(x,y)(B_{\de}(y^*)))>0;$
\smallskip
\item
$X$ and $Y$ are Asplund, and\\
$\ds\lim_{\de\downarrow0,\,\tau\uparrow1}\;
\inf_{\substack{x\in B_{\de}(\bx)\setminus F_p\iv(\by),\,
\by\ne y\in F(p,x)\cap B_{\de}(\by)\\ p\in P,\, \|y^*\|=1,\,\langle y^*,y-\by\rangle>\tau\|y-\by\|}} d(0,D^*_FF_p(x,y)(B_{\de}(y^*)))>0.$
\end{enumerate}
\end{corollary}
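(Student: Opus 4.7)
The statement is flagged by the authors as an immediate consequence of \cref{C3.3}, so the plan is simply to unpack what the qualitative liminf conditions in (i) and (ii) mean and then feed the resulting data into \cref{C3.3}. The key observation is that the inner infimum in each condition is monotone nondecreasing as $\de\downarrow 0$: both the constraint set over which the infimum is taken and the ball $B_\de(y^*)$ appearing in the coderivative image shrink, so both effects push $d(0, D^*F_p(x,y)(B_\de(y^*)))$ upward. Consequently, $\lim_{\de\downarrow 0}$ of the infimum coincides with $\sup_{\de>0}$, and the limit being strictly positive is equivalent to the existence of some $\de_0>0$ and $\al>0$ for which the infimum at level $\de_0$ already exceeds $\al$.

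For assertion (i), I would first extract from condition (i) the existence of $\al>0$ and $\de_0>0$ such that $d(0,D^*_CF_p(x,y)(B_{\de_0}(y^*)))\ge \al$ whenever $p\in P$, $x\in B_{\de_0}(\bx)\setminus F_p\iv(\by)$, $\by\ne y\in F(p,x)\cap B_{\de_0}(\by)$, and $y^*\in Y^*$ satisfies \eqref{T1-1}. Then I would set $\eta:=\de_0$ and pick auxiliary parameters $\de,\mu\in\,]0,+\infty]$ small enough that $\de+\mu\le \de_0$ and $\al\mu\le \de_0$ (e.g., $\de:=\de_0/2$ and $\mu:=\min\{\de_0/2,\,\de_0/\al\}$). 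With these choices, every $(p,x,y)$ satisfying \eqref{P4.1-1} automatically lies in the $\de_0$-neighbourhoods, and every $y^*$ satisfying \eqref{T1-1} fulfils \eqref{C3.3-1} with this $\eta$. Invoking \cref{C3.3}(i) then yields $\al$-subregularity in $x$ uniformly in $p$ over $P$ at $(\bx,\by)$ with $\de$ and $\mu$, and in particular subregularity in the qualitative sense.

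For assertion (ii), the plan is identical, with one extra bookkeeping step: the double limit $\lim_{\de\downarrow 0,\,\tau\uparrow 1}$ being positive produces not only $\al>0$ and $\de_0>0$ but also a $\tau\in\,]0,1[$ so that $d(0,D^*_FF_p(x,y)(B_{\de_0}(y^*)))\ge\al$ for all relevant data with \eqref{T1-2} in place of \eqref{T1-1}. The same choice of $\de,\mu,\eta$ then allows \cref{C3.3}(ii) to be applied in the Asplund setting.

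The only real content of the proof is verifying that the liminf being positive can be traded for a single choice of $\de_0$ working uniformly — the monotonicity argument above — after which the quantitative coderivative condition of \cref{C3.3} is satisfied by construction. There is no genuine obstacle; the step that requires the most care is simply choosing $\de$ and $\mu$ (and, in case (ii), $\tau$) so that the neighbourhoods $B_{\de+\mu}(\bx)$ and $B_{\al\mu}(\by)$ required by \cref{C3.3} are contained in the neighbourhoods $B_{\de_0}(\bx)$ and $B_{\de_0}(\by)$ supplied by the hypothesis.
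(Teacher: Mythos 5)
Your proposal is correct and matches the paper's route: the paper states \cref{C3.7} as an immediate consequence of \cref{C3.3}, and your argument supplies exactly the expected details — monotonicity of the inner infimum in $\de$ (and $\tau$), extraction of $\al$ and $\de_0$ from positivity of the limit, and the choice of $\de$, $\mu$ and the dual-ball radius $\eta$ so that the hypotheses of \cref{C3.3} are met.
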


\begin{remark}\label{R3.1}
\begin{enumerate}
\item
In the case when $P$ is a neighborhood of a given point $\bar p$ in a metric space,
\cref{C3.7} (taking into account  \cref{R3.02}(ii) in some instances) improves
\cite[Theorem~3.6]{LedZhu99}, \cite[Theorem~3.4]{NgaThe04},
\cite[Theorem~3.2]{LeeTamYen08},
\cite[Theorem~3.5]{HuyYao09},
\cite[Theorem~3.1]{HuyKimNin12},
\cite[Corollary~2.2]{Ngh14},
\cite[Theorem~1]{ChuKim16} (in the linear setting),
and \cite[Theorem~4.1(e)]{Iof17.1}.
\if{
\NDC{5.1.20
Should we explain in details about the improvements?}
}\fi
\item
Clarke
normal cones in this section can be replaced by Ioffe's \emph{$G$-nor\-mal cones}~\cite{Iof17}.
\end{enumerate}
\end{remark}

\subsection{Dual Necessary Conditions}\label{S4.2}

In this subsection, $X$ and $Y$ are normed spaces, $F:P\times X\rightrightarrows Y$, $\bx\in X$, $\by\in Y$,
$\al>0$, ${\delta\in]0,+\infty]}$, $\mu\in]0,+\infty]$, and we assume that $\gph F_p$ is convex for all $p\in P$.

The next statement provides a necessary condition for uniform $\al-$subregularity in terms of subdifferentials of the function $\psi_p$ defined by \eqref{psi}.

\begin{proposition}\label{P5.1}
If $F$ is $\al-$subregular in $x$ uniformly in $p$ over $P$ at $(\bx,\by)$ with $\delta$ and $\mu$, then inequality \eqref{P5-1} is satisfied with
$\ga:=\al\iv$ for all $p\in P$, $x\in B_{\de}(\bx)$ and $y\in Y$ satisfying~\eqref{P4.1-1}.
\end{proposition}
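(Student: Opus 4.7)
The plan is to derive \eqref{P5-1} from the primal local slope characterization in Corollary~\cref{C2.2}(i) combined with the convex-analytic identification of slope with distance to the subdifferential in Lemma~\cref{P1.3}(ii), applied in the product space $X\times Y$ equipped with the parametric norm \eqref{pnorm}.

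First, I would observe that the convexity of $\gph F_p$ makes $\psi_p$ defined by \eqref{psi} convex on $X\times Y$ as a sum of the convex function $(u,v)\mapsto\|v-\by\|$ and the indicator of the convex set $\gph F_p$. From \eqref{P4.1-1} one has $y\in F(p,x)$ while $x\notin F_p\iv(\by)$, which forces $y\ne\by$ (else $\by\in F(p,x)$ would yield $x\in F_p\iv(\by)$), so $\psi_p(x,y)=\|y-\by\|>0$. Setting $\ga:=\al\iv$, Corollary~\cref{C2.2}(i) supplies inequality \eqref{C1-1} at $(x,y)$. Because $\|x-\bx\|<\de<\de+\mu$ and $\|y-\by\|<\al\mu$, the constraints $d(u,\bx)<\de+\mu$ and $d(v,\by)<\al\mu$ are automatically met for $(u,v)$ in a small neighbourhood of $(x,y)$ and can be dropped from the limsup.

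Next I would compare the resulting limsup with the slope of $\psi_p$ at $(x,y)$ taken with respect to the metric $d_\ga$ induced by \eqref{pnorm}. For $(u,v)\in\gph F_p$ one has $\psi_p(x,y)-\psi_p(u,v)=\|y-\by\|-\|v-\by\|$, while for $(u,v)\notin\gph F_p$ the corresponding ratio contributes $0$ via the positive part in the definition of the slope. Hence, using $[t]_+\ge t$, the $d_\ga$-slope of $\psi_p$ at $(x,y)$ dominates the limsup in \eqref{C1-1}, so it is at least $\al$. Since $X\times Y$ endowed with \eqref{pnorm} is a normed space whose dual norm is given by \eqref{dnorm}, and $\psi_p$ is convex with $\psi_p(x,y)>0$, Lemma~\cref{P1.3}(ii) applied in this product space yields
\[
d_\ga(0,\sd\psi_p(x,y))=|\nabla\psi_p|(x,y)\ge\al,
\]
which is exactly \eqref{P5-1}; convexity of $\psi_p$ further gives $\sd\psi_p=\sd^F\psi_p=\sd^C\psi_p$, so both variants of \eqref{P5-1} hold simultaneously.

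The only delicate point is the bookkeeping of norms: one must ensure that the slope in Lemma~\cref{P1.3}(ii), the distance $d_\ga$ in \eqref{C1-1}, and the dual distance in \eqref{P5-1} are all computed consistently with respect to the parametric norm \eqref{pnorm} and its dual \eqref{dnorm}. Once this is made explicit, the argument reduces to the standard convex slope/subdifferential identity.
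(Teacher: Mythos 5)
Your proposal is correct and follows essentially the same route as the paper: both deduce \eqref{P5-1} from the convexity of $\psi_p$ together with the local slope estimate of \cref{C2.2}(i), translated into the dual statement via the convex slope--subdifferential identity in the product space with norm \eqref{pnorm} and dual norm \eqref{dnorm}. The only cosmetic difference is that the paper carries out the relevant half of \cref{P1.3}(ii) by hand (bounding $\|(x^*,y^*)\|_\ga$ below by the difference quotient for each subgradient and then taking the infimum), whereas you invoke the lemma directly after checking $\psi_p(x,y)=\|y-\by\|>0$.
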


\begin{proof}
Under the assumptions made, the function $\psi_p$ is convex for all $p\in P$.
Let ${\ga:=\al\iv}$, and
$p\in P$, $x\in B_{\de}(\bx)$ and $y\in Y$ satisfy \eqref{P4.1-1}.
For any $(x^*,y^*)\in\sd\psi_p(x,y)$, we have
\begin{align*}
\|(x^*,y^*)\|_{\ga}
&=\sup_{\substack{(u,v)\ne(0,0)}} \dfrac{\ang{(x^*,y^*),(u,v)}} {\|(u,v)\|_{\ga}}\\
&=\limsup_{\substack{u{\to}x,\, v\to y\\
(u,v)\ne(x,y)}} \dfrac{-\ang{(x^*,y^*),(u,v) -
(x,y)}}{\|(u,v)-(x,y)\|_{\ga}}\\
&\ge\limsup_{\substack{u{\to}x,\, v\to y\\ (u,v)\ne(x,y)}} \dfrac{\psi_p(x,y)-\psi_p(u,v)} {\|(u,v)-(x,y)\|_{\ga}}\\
&=\limsup_{\substack{u\to x,\,v\to y\\(u,v)\in\gph F_p,\,(u,v)\ne(x,y)}}
\dfrac{\|y-\by\|-\|v-\by\|}{\|(u,v)-(x,y)\|_\ga}.
\end{align*}
By \cref{C2.2}(i), we have
$\|(x^*,y^*)\|_{\ga}\ge\al.$
Taking the infimum in the \LHS\ of the last inequality over $(x^*,y^*)\in\sd\psi_p(x,y)$, we obtain inequality \eqref{P5-1}.
\end{proof}

Combining the above statement with \cref{P5}, we obtain a complete subdifferential characterization of the uniform subregularity in the convex setting.

\begin{corollary}
Let $X$ and $Y$ be Banach, and $\gph F_p$ be closed for all ${p\in P}$.
The mapping $F$ is subregular in $x$ uniformly in $p$ over $P$ at $(\bx,\by)$ if and only if
\begin{align*}
\sup_{\ga>0}
\liminf_{\substack{x\to\bx,\,
y\to\by\\p\in P,\,x\notin F_p\iv(\by),\,\by\ne y\in F(p,x)}}
d_\gamma\left(0,\sd\psi_p(x,y)\right)>0.
\end{align*}
\end{corollary}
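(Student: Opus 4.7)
The plan is to combine \cref{P5.1} (necessity) with \cref{P5} (sufficiency), which together yield the qualitative subdifferential characterization. Since the convex-graph assumption is imposed for the necessity direction and the Banach+closed-graph assumption for the sufficiency direction, both blanket assumptions of the corollary are exactly what is needed.

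For the necessity direction, I would first unpack the definition of subregularity: the assumption means that there exist $\al>0$, $\de\in\,]0,+\infty]$, and $\mu\in\,]0,+\infty]$ such that $F$ is $\al$-subregular in $x$ uniformly in $p$ over $P$ at $(\bx,\by)$ with $\de$ and $\mu$. Applying \cref{P5.1} with $\ga:=\al\iv$, inequality \eqref{P5-1} holds for all $p\in P$, $x\in B_\de(\bx)$, and $y\in Y$ satisfying \eqref{P4.1-1}. In particular, the liminf over the restricted net (where $x\to\bx$, $y\to\by$, $p\in P$, $x\notin F_p\iv(\by)$, and $\by\ne y\in F(p,x)$) of $d_\ga(0,\sd\psi_p(x,y))$ is at least $\al>0$, so the supremum over $\ga>0$ is strictly positive.

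For the sufficiency direction, suppose the displayed sup-liminf is positive. Then there exist $\ga>0$ and $\al>0$ together with $\eta\in\,]0,+\infty[$ such that $d_\ga(0,\sd\psi_p(x,y))\ge\al$ whenever $p\in P$, $\|x-\bx\|<\eta$, $\|y-\by\|<\eta$, $x\notin F_p\iv(\by)$, and $y\in F(p,x)\setminus\{\by\}$. I would then choose $\de,\mu\in\,]0,+\infty[$ with $\de+\mu<\eta$ and $\al\mu<\eta$, so that the constraints $x\in B_{\de+\mu}(\bx)$ and \eqref{P4.1-1} (with $y\in B_{\al\mu}(\by)$ as provided by the definition of $\psi_p$) force $(x,y)$ into the region where \eqref{P5-1} holds. \cref{P5} (which requires $X,Y$ Banach and $\gph F_p$ closed — exactly our blanket assumptions) then yields $\al$-subregularity in $x$ uniformly in $p$ over $P$ at $(\bx,\by)$ with $\de$ and $\mu$, hence subregularity.

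The only mild obstacle is the bookkeeping in the sufficiency direction: the displayed liminf naturally constrains $(x,y)$ to a common neighbourhood of $(\bx,\by)$, whereas \cref{P5} constrains $x$ to $B_{\de+\mu}(\bx)$ and $y$ implicitly to $B_{\al\mu}(\by)$ via the domain of $\psi_p$. Picking $\de$ and $\mu$ so that both radii fit inside the $\eta$ that comes from the assumed sup-liminf condition reconciles the two, which is a routine adjustment.
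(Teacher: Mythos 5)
Your proposal is correct and is exactly the paper's intended argument: the paper proves this corollary simply by citing \cref{P5.1} for necessity and \cref{P5} for sufficiency, and your parameter bookkeeping (choosing $\de,\mu$ with $\de+\mu$ and $\al\mu$ inside the neighbourhood furnished by the positive liminf, and noting that $x\notin F_p\iv(\by)$ already forces $y\ne\by$) is the routine reconciliation the paper leaves implicit. The only cosmetic slips are that the constraint $y\in B_{\al\mu}(\by)$ comes from condition \eqref{P4.1-1} rather than from the definition of $\psi_p$, and that one should note that, $\psi_p$ being convex under the section's blanket assumption, the Clarke subdifferential in \cref{P5} coincides with the convex subdifferential $\sd\psi_p$ appearing in the displayed condition.
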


\if{
\begin{remark}
In view of the
convexity of $\psi_p$ and \cref{P1.3}(ii), we have
$|\nabla \psi_p|_\ga(x,y)=d_\gamma\left(0,\sd\psi_p(x,y)\right)$.
Hence, \cref{C3.1} can be seen as the dual counterpart of \cref{T4.1}.
\end{remark}
}\fi

The next corollary follows from \cref{P5.1} in view of the representation \eqref{dnorm} of the dual norm.

\begin{corollary}\label{C7.1}
If $F$ is $\al-$subregular in $x$ uniformly in $p$ over $P$ at $(\bx,\by)$ with $\delta$ and ${\mu}$, then
$\|x^*\|\ge\al(1-\|y^*\|)$
for all $p\in P$, $x\in B_{\de}(\bx)$ and $y\in Y$ satisfying \eqref{P4.1-1}, and all $(x^*,y^*)\in \sd\psi_p(x,y)$.
As a consequence,
\begin{align*}
\liminf_{\substack{F_p\iv(\by)\not\ni x\to\bx,\,F(p,x)\ni y\to \by,\,y^*\to0\\p\in P,\,y\ne\by,\, (x^*,y^*)\in\sd\psi_p(x,y)}} \|x^*\|\ge\al.
\end{align*}
\end{corollary}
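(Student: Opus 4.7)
The plan is to apply \cref{P5.1} directly and then unpack the definition of the dual parametric norm \eqref{dnorm} to read off the desired scalar inequality. Since the corollary is essentially a translation of the subdifferential condition \eqref{P5-1} into a pointwise estimate on the component norms, the argument should be short and purely computational; the main conceptual content is already in \cref{P5.1}.

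First, fix $p\in P$, $x\in B_\de(\bx)$ and $y\in Y$ satisfying \eqref{P4.1-1}, together with an arbitrary subgradient $(x^*,y^*)\in\sd\psi_p(x,y)$. Apply \cref{P5.1} with $\ga:=\al\iv$, which yields $d_\ga\left(0,\sd\psi_p(x,y)\right)\ge\al$, hence in particular $\|(x^*,y^*)\|_{\ga}\ge\al$. By the definition \eqref{dnorm} of the dual norm on $X^*\times Y^*$, this reads
\begin{align*}
\|x^*\|+\al\|y^*\|=\|(x^*,y^*)\|_{\al\iv}\ge\al,
\end{align*}
which rearranges to $\|x^*\|\ge\al(1-\|y^*\|)$, giving the first assertion.

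For the consequential liminf inequality, I would simply take the limit inferior in the above estimate along any sequence with $y^*\to0$: then $\al(1-\|y^*\|)\to\al$, so $\liminf\|x^*\|\ge\al$ as the joint limit is taken over $F_p\iv(\by)\not\ni x\to\bx$, $F(p,x)\ni y\to\by$, $y^*\to 0$, and $(x^*,y^*)\in\sd\psi_p(x,y)$. No compactness or selection argument is needed because the bound holds uniformly over all admissible quadruples $(p,x,y,(x^*,y^*))$.

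I do not expect any serious obstacle here: the only points that require a line of verification are that the tuples appearing in the liminf are admissible in \cref{P5.1} (which is immediate from \eqref{P4.1-1}) and that no issue arises when $\partial \psi_p(x,y)$ is empty (in which case the statement is vacuous at that point, as is standard with the convention $\inf\emptyset=+\infty$). Thus the entire proof should fit in a few lines.
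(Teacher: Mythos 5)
Your proof is correct and is exactly the argument the paper intends: the corollary is stated as following from \cref{P5.1} ``in view of the representation \eqref{dnorm} of the dual norm,'' which is precisely your computation $\|x^*\|+\al\|y^*\|=\|(x^*,y^*)\|_{\al\iv}\ge\al$ followed by passing to the limit inferior as $y^*\to0$. Nothing is missing.
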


The next statement gives a partial converse to \cref{T2}.

\begin{theorem}\label{C3.08}
If $F$ is $\al-$subregular in $x$ uniformly in $p$ over $P$ at $(\bx,\by)$ with $\delta$ and $\mu$, then for all $p\in P$, $x\in B_{\de}(\bx)$ and $y\in Y$ satisfying \eqref{P4.1-1}, and all $y^*\in Y^*$ satisfying \eqref{T1-1},
inequality \eqref{T2-1} is satisfied with $\ga:=\al\iv$.
\end{theorem}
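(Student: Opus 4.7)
The plan is to deduce this dual necessary condition by combining the subdifferential necessary condition in \cref{P5.1} with an exact convex sum rule applied to the function $\psi_p$ defined in \eqref{psi}. Since $\gph F_p$ is assumed convex for every $p\in P$, the function $\psi_p$ is itself convex, and writing it as $\psi_p=f_1+f_2$ with $f_1(u,v):=\|v-\by\|$ and $f_2:=i_{\gph F_p}$, the function $f_1$ is continuous on the whole space, hence at every point of $\dom f_2=\gph F_p$. Thus the convex sum rule in \cref{SR}(i) applies and yields the equality
\begin{align*}
\sd\psi_p(x,y)=\{0\}\times\sd\|\cdot-\by\|(y)+N_{\gph F_p}(x,y).
\end{align*}

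Next I would fix arbitrary $p\in P$, $x\in B_\de(\bx)$ and $y\in Y$ satisfying \eqref{P4.1-1}, together with any $y^*\in Y^*$ satisfying \eqref{T1-1}. Since $y\ne\by$, \cref{L3}(ii) shows that the set $\sd\|\cdot-\by\|(y)$ is exactly the collection of such $y^*$, so $(0,y^*)\in\{0\}\times\sd\|\cdot-\by\|(y)$. Consequently, for every $(u^*,v^*)\in N_{\gph F_p}(x,y)$ the vector $(u^*,v^*+y^*)$ belongs to $\sd\psi_p(x,y)$.

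Applying \cref{P5.1} with $\ga:=\al\iv$ yields $\|(u^*,v^*+y^*)\|_\ga\ge\al$. Using the expression \eqref{dnorm} of the dual norm, this is exactly
\begin{align*}
\|u^*-0\|+\ga\iv\|v^*-(-y^*)\|\ge\al,
\end{align*}
i.e. the distance (in the sense of $d_\ga$) from $(0,-y^*)$ to the arbitrary element $(u^*,v^*)$ of $N_{\gph F_p}(x,y)$ is at least $\al$. Taking the infimum over $(u^*,v^*)\in N_{\gph F_p}(x,y)$ produces inequality \eqref{T2-1} with $\ga=\al\iv$, which completes the argument. The reasoning is essentially bookkeeping once \cref{P5.1} and the sum rule are in place; the only point requiring a brief check is the qualification condition for the convex sum rule, which is trivial here because the norm term is globally continuous.
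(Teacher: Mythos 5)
Your proof is correct and follows essentially the same route as the paper: decompose $\psi_p$ as the norm term plus the indicator of the convex graph, apply the exact convex sum rule of \cref{SR}(i) (whose qualification holds since the norm is everywhere continuous), identify the $y^*$ satisfying \eqref{T1-1} with $\sd\|\cdot-\by\|(y)$ via \cref{L3}(ii), and then invoke \cref{P5.1} to pass from $d_\ga(0,\sd\psi_p(x,y))\ge\al$ to \eqref{T2-1}. The only difference is that you spell out the bookkeeping (that only the inclusion $\{0\}\times\sd\|\cdot-\by\|(y)+N_{\gph F_p}(x,y)\subset\sd\psi_p(x,y)$ is actually needed, and the identity $d_\ga((0,-y^*),(u^*,v^*))=\|(u^*,v^*+y^*)\|_\ga$) which the paper leaves implicit.
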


\begin{proof}
Observe that $\psi_p$ is the sum of the convex continuous function $v\mapsto g(v):={\|v-\by\|}$ and the indicator function of the convex set $\gph F_p$.
By \cref{SR}(i), $\sd\psi_p(x,y)=\{0\}\times\sd g(y)+N_{\gph F_p}(x,y)$.
The assertion follows from \cref{P5.1} in view of \cref{L3}(ii).
\sloppy
\end{proof}

Combining \cref{T2,C3.08},
we can formulate a necessary and sufficient characterization of the uniform subregularity in the convex setting.

\begin{corollary}\label{C3.8}
Let $X$ and $Y$ be Banach, and $\gph F_{p}$ be closed for all ${p\in P}$.
The mapping $F$ is subregular in $x$ uniformly in $p$ over $P$ at $(\bx,\by)$ if and only if
 \begin{align}\label{C3.9-1}
\sup_{\ga>0}\liminf_{\substack{F_p\iv(\by)\not\ni x\to\bx,\,
F(p,x)\ni y\to\by\\p\in P,\,y\ne\by,\,\|y^*\|=1,\,
\langle y^*,y-\by\rangle=\|y-\by\|}} d_\ga((0,-y^*),N_{\gph F_p}(x,y))>0.
\end{align}
\end{corollary}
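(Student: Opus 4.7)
The plan is to derive \cref{C3.8} as a direct combination of \cref{T2}(i) (sufficiency) and \cref{C3.08} (necessity), noting that in the convex setting the Clarke normal cone $N^C_{\gph F_p}$ coincides with the convex normal cone $N_{\gph F_p}$, so \cref{T2}(i) applies verbatim in normed (in fact, Banach) spaces without any Asplund assumption.

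For the necessity direction ($\Rightarrow$), assume $F$ is subregular in $x$ uniformly in $p$ over $P$ at $(\bx,\by)$. By definition, there exist $\al>0$, $\de\in]0,+\infty]$ and $\mu\in]0,+\infty]$ such that $F$ is $\al$-subregular in $x$ uniformly in $p$ over $P$ at $(\bx,\by)$ with $\de$ and $\mu$. Setting $\ga:=\al\iv$ and applying \cref{C3.08}, inequality \eqref{T2-1} holds for all $p\in P$, $x\in B_\de(\bx)$ and $y\in Y$ satisfying \eqref{P4.1-1}, and all $y^*\in Y^*$ satisfying \eqref{T1-1}. Since this lower bound $\al$ is uniform over such quadruples $(p,x,y,y^*)$ with $x$ and $y$ close to $\bx$ and $\by$ respectively, the liminf in \eqref{C3.9-1} for this choice of $\ga$ is at least $\al>0$, hence the supremum over $\ga>0$ is strictly positive.

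For the sufficiency direction ($\Leftarrow$), assume \eqref{C3.9-1} holds. Then there exist $\ga>0$ and $\al>0$ such that the liminf in \eqref{C3.9-1} is strictly greater than $\al$. Unfolding the definition of $\liminf$, one extracts an $\eta\in]0,+\infty[$ such that
\begin{align*}
d_\ga((0,-y^*),N_{\gph F_p}(x,y))\ge\al
\end{align*}
for all $p\in P$, all $x\in B_\eta(\bx)\setminus F_p\iv(\by)$, all $y\in F(p,x)\cap B_\eta(\by)\setminus\{\by\}$, and all $y^*\in Y^*$ satisfying \eqref{T1-1}. Now choose $\de\in]0,+\infty[$ and $\mu\in]0,+\infty[$ small enough so that $\de+\mu<\eta$ and $\al\mu<\eta$ (e.g., $\de=\mu=\min\{\eta/2,\eta/(2\al)\}$). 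Since $\gph F_p$ is convex and closed, $N^C_{\gph F_p}(x,y)=N_{\gph F_p}(x,y)$, so the hypothesis of \cref{T2}(i) is met with parameters $\al$, $\de$, $\mu$ and $\ga$, and the theorem yields $\al$-subregularity of $F$ in $x$ uniformly in $p$ over $P$ at $(\bx,\by)$ with $\de$ and $\mu$, hence subregularity.

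No serious obstacle is anticipated: the only mildly delicate point is the bookkeeping of neighborhood radii, because \cref{T2}(i) demands the normal cone inequality on the enlarged ball $B_{\de+\mu}(\bx)\times B_{\al\mu}(\by)$, while the $\liminf$ in \eqref{C3.9-1} only provides it on some a priori ball $B_\eta(\bx)\times B_\eta(\by)$. Shrinking $\de$ and $\mu$ as above reconciles the two; no additional argument is required.
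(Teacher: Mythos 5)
Your proposal is correct and follows exactly the route the paper intends: the corollary is stated as a direct combination of \cref{T2}(i) (sufficiency, with $N^C_{\gph F_p}=N_{\gph F_p}$ in the convex setting) and \cref{C3.08} (necessity), and your bookkeeping of the radii $\de$, $\mu$ against the $\liminf$ neighbourhood is the only detail the paper leaves implicit.
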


{The next example illustrates the necessary condition in \cref{C3.08}}.

\begin{example}
Let $P=X=Y:=\R$,
$F(p,x):=\{p-x\}$
for all $p\in P$ and $x\in X$,and let $\bx=\by:=0$.
By \eqref{G}, $F_p\iv(\by)=\{p\}$.
Thus, $d(x,F_p\iv(\by))=d(\by,F(p,x))=|x-p|$
for all $p\in P$ and $x\in X$.
Hence, inequality \eqref{D1.2-2} is satisfied for all $p\in P$, $x\in X$, and $\al\in]0,1]$, i.e.
the mapping $F$ is $\al-$subregular in $x$ uniformly in $p$ over $P$ at $(\bx,\by)$ for any $\al\in]0,1]$.
{We have}
$\gph F_p=\{(x,y)\mid y=p-x\}$ is closed and convex for all $p\in P$, and $N_{\gph F_p}(x,y)=\{(t,t)\mid t\in\R\}$ for any $(x,y)\in\gph F_p$.
Let $y^*\in\R$ satisfy \eqref{T1-1}.
Then $y^*=1$ if $y>0$, and $y^*=-1$ if $y<0$.
It is easy to check that, given a $\ga>0$, in both cases the distance
$d_\ga((0,-y^*),N_{\gph F_p}(x,y))$ equals 1 if $\ga\le1$, or $\ga\iv$ if $\ga>1$.
Hence, condition \eqref{C3.9-1} is satisfied, confirming the uniform {subregularity} of $F$.
\end{example}

The next statement is a consequence of
\cref{C3.08}.
It is in a sense a partial converse to \cref{C3.3}.

\begin{corollary}\label{C4.9}
If $F$ is $\al-$subregular in $x$ uniformly in $p$ over $P$ at $(\bx,\by)$ with ${\delta}$ and $\mu$, then $d(0,D^*F_p(x,y)(B_{\eta}(y^*)))\ge\al(1-\eta)$
for all ${\eta\in]0,1[}$, $p\in P$, $x\in B_{\de}(\bx)$ and $y\in Y$ satisfying \eqref{P4.1-1},
and all $y^*\in Y^*$ satisfying \eqref{T1-1}.
\sloppy
\end{corollary}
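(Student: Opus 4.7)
The plan is to reduce this to \cref{C3.08}, which under the same hypotheses gives the normal cone inequality
\begin{align*}
d_\ga\bigl((0,-y^*),N_{\gph F_p}(x,y)\bigr)\ge\al\quad\text{with}\quad\ga:=\al\iv.
\end{align*}
From here the argument is a direct unpacking of two definitions: the dual norm \eqref{dnorm} on $X^*\times Y^*$ and the coderivative \eqref{coder}.

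First I would rewrite the left-hand side above using \eqref{coder}: since $u^*\in D^*F_p(x,y)(v^*)$ if and only if $(u^*,-v^*)\in N_{\gph F_p}(x,y)$, and since $\ga\iv=\al$, we have
\begin{align*}
d_\ga\bigl((0,-y^*),N_{\gph F_p}(x,y)\bigr)
=\inf\bigl\{\|u^*\|+\al\|v^*-y^*\|\mid v^*\in Y^*,\,u^*\in D^*F_p(x,y)(v^*)\bigr\}.
\end{align*}
So the normal cone inequality becomes
\begin{align*}
\|u^*\|+\al\|v^*-y^*\|\ge\al
\end{align*}
for every $v^*\in Y^*$ and every $u^*\in D^*F_p(x,y)(v^*)$.

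Then I would restrict attention to $v^*\in B_\eta(y^*)$. For such $v^*$ we have $\|v^*-y^*\|<\eta$, hence $\al\|v^*-y^*\|<\al\eta$, and consequently
\begin{align*}
\|u^*\|\ge\al-\al\|v^*-y^*\|>\al(1-\eta)
\end{align*}
for every $u^*\in D^*F_p(x,y)(v^*)$. Taking the infimum over all such $u^*$ (and over all admissible $v^*\in B_\eta(y^*)$) yields the claimed estimate
$d\bigl(0,D^*F_p(x,y)(B_\eta(y^*))\bigr)\ge\al(1-\eta)$.

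I do not expect any real obstacle: the content of the corollary is already sitting in \cref{C3.08}, and the only work is to translate from the normal cone language on $\gph F_p$ to the coderivative language, with the parameter $\eta$ absorbing the $y^*$-component of the dual norm. The value of the factor $(1-\eta)$ comes precisely from the fact that $B_\eta(y^*)$ is open, so the $\al\|v^*-y^*\|$ term is strictly bounded by $\al\eta$.
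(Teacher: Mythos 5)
Your proposal is correct and follows essentially the same route as the paper: the paper's proof likewise starts from the normal cone inequality \eqref{T2-1} with $\ga:=\al\iv$ (i.e.\ \cref{C3.08}), rewrites it via \eqref{coder} and \eqref{dnorm} as $\|u^*\|+\al\|v^*-y^*\|\ge\al$ for all $v^*\in Y^*$ and $u^*\in D^*F_p(x,y)(v^*)$, and then restricts to $\|v^*-y^*\|<\eta$ to get $\|u^*\|>\al(1-\eta)$. No gaps.
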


\begin{proof}
In view of the representations \eqref{coder} of the coderivative and \eqref{dnorm} of the dual norm, condition \eqref{T2-1} with $\ga:=\al\iv$ means that $\|u^*\|+\al{\|v^*-y^*\|}\ge\al$ for all $v^*\in Y^*$ and ${u^*\in D^*F_p(x,y)(v^*)}$.
Hence, it yields $\|u^*\|>\al(1-\eta)$ if
$\|v^*-y^*\|<\eta$.
\sloppy
\end{proof}

Combining the above statement with \cref{C3.3}, we obtain a {complete} coderivative characterization of the uniform $\al-$subregularity in the convex setting.
It improves \cite[Theorem~3]{ChuKim16} (in the linear case).

\begin{corollary}
Let $X$ and $Y$ be Banach, and $\gph F_{p}$ be closed for all $p\in P$.
The mapping $F$ is $\al-$subregular in $x$ uniformly in $p$ over $P$ at $(\bx,\by)$ if and only~if
$$
\lim_{\de\downarrow0}\;\inf_{\substack{x\in B_{\de}(\bx)\setminus F_p\iv(\by),\,
\by\ne y\in F(p,x)\cap B_{\de}(\by)\\ p\in P,\,\|y^*\|=1,\, \langle y^*,y-\by\rangle=\|y-\by\|}}
d(0,D^*F_p(x,y)(B_{\de}(y^*)))\ge\al.
$$
\end{corollary}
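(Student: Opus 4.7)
The plan is to read the statement as a direct combination of the sufficient condition in \cref{C3.3} and the necessary condition in \cref{C4.9}, where the single parameter $\de$ in the limit must be matched against the three distinct parameters $\de$, $\mu$, $\eta$ that appear in those corollaries. Because $\gph F_p$ is closed and convex, the Clarke and Fr\'echet coderivatives both coincide with the coderivative in the sense of convex analysis, so no ambiguity arises from the notation $D^*$.

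For the necessary direction, I would fix $\de_0,\mu>0$ witnessing the $\al$-subregularity of $F$ and apply \cref{C4.9} with $\eta$ set equal to $\de$: for each $\de\in(0,1)$ one has
\[
d(0, D^*F_p(x,y)(B_{\de}(y^*))) \ge \al(1 - \de)
\]
for all $p\in P$, $x\in B_{\de_0}(\bx)$ and $y\in Y$ satisfying \eqref{P4.1-1}, and all $y^*$ satisfying \eqref{T1-1}. Further restricting $\de\le\min\{\de_0,\al\mu,1\}$ guarantees that every tuple $(p,x,y,y^*)$ indexing the infimum in the statement already lies in the set to which \cref{C4.9} applies, so the infimum is at least $\al(1-\de)$. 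Letting $\de\downarrow0$ then yields the desired bound $\ge\al$.

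For the sufficient direction, I would fix an arbitrary $\al'\in(0,\al)$. Because the indexed infimum is monotone non-decreasing as $\de\downarrow0$, the hypothesis $\lim\ge\al$ produces a $\de_0>0$ for which this infimum (computed with $\de_0$ in place of $\de$) is at least $\al'$. I would then choose $\de_1,\mu>0$ small enough that $\de_1+\mu\le\de_0$ and $\al'\mu\le\de_0$: this ensures that every $(p,x,y,y^*)$ with $x\in B_{\de_1+\mu}(\bx)\setminus F_p\iv(\by)$, $\by\ne y\in F(p,x)\cap B_{\al'\mu}(\by)$ and $y^*$ satisfying \eqref{T1-1} lies in the indexing set for $\de_0$, so that $d(0,D^*F_p(x,y)(B_{\de_0}(y^*)))\ge\al'$. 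Applying \cref{C3.3}(i) with $\al'$ in place of $\al$ and $\eta:=\de_0$ yields $\al'$-subregularity of $F$ with $\de_1$ and $\mu$. Since $\al'<\al$ was arbitrary, the modulus of uniform subregularity at $(\bx,\by)$ is at least $\al$, which is the content of the conclusion.

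The main obstacle is pure bookkeeping: the single $\de$ in the limit simultaneously controls the radii of the neighbourhoods of $\bx$, $\by$, and $y^*$, and must be juggled against the three parameters $\de,\mu,\eta$ of \cref{C3.3,C4.9}. The factor $1-\eta$ in the necessary estimate forces the diagonal choice $\eta=\de$ and a passage to the limit on the necessity side; dually, on the sufficiency side one must work with $\al'<\al$ and pass $\al'\uparrow\al$, which is the standard modulus-versus-property reading of this iff at the extremal rate.
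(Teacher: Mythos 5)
Your proof is correct and follows essentially the same route as the paper, which states this corollary without proof as an immediate combination of \cref{C4.9} (necessity, applied with $\eta:=\de$ and letting $\de\downarrow0$ to absorb the factor $1-\eta$) and \cref{C3.3} (sufficiency); your parameter bookkeeping simply makes explicit what the paper leaves implicit. The one caveat you flag yourself --- that the sufficiency direction literally yields $\al'$-subregularity for every $\al'<\al$, i.e.\ a modulus of at least $\al$, rather than $\al$-subregularity at the exact rate --- is a boundary-case imprecision already present in the paper's own formulation, so your ``modulus-versus-property'' reading is the faithful one.
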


\if{
The next corollary provides quantitative dual characterizations of Robinson subregularity.
\begin{corollary}\label{T4.2}
Let $X$ and $Y$ be normed spaces, $\al>0$, $\delta\in]0,+\infty]$ and $\mu\in]0,+\infty]$.
Suppose $\gph F_{\bar p}$ is convex, and the mapping $F$ is Robinson $\al-$subregular at $\bx$ with $\de$ and $\mu$.
The following statements hold.
\begin{enumerate}
\item
Inequality \eqref{T4-2} holds for $\ga:=\al\iv$ and all $x,y$ satisfying \eqref{P4.2-1}, and $y^*\in Y^*$ satisfying \eqref{T1-1}.
\item
Let $\al<\frac{1}{\sqrt{2}}$.
Inequality \eqref{T3.3-2} holds for some $\ga>0$ and all $x,y$ satisfying \eqref{P4.2-1}, $y^*\in J_\ga(y)$, and $x^*\in D^*F_{\bar p}(x,y)(y^*)$.
\end{enumerate}
\end{corollary}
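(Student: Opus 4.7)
The plan is to deduce the statement directly from the preceding normal cone necessary condition, \cref{C3.08}, which has just been established under the same convexity hypothesis on $\gph F_p$ and the same $\al$-subregularity assumption. All the analytic heavy lifting (the Ekeland step embedded in \cref{C2.2}(i), followed by the convex subdifferential sum rule used in the proof of \cref{C3.08}) is already packaged, so what remains is a repackaging of the normal-cone estimate into coderivative language via the definitions of $D^*F_p$ and the parametric dual norm.

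Concretely, applying \cref{C3.08} with $\ga:=\al\iv$ yields, for every admissible quadruple $(p,x,y,y^*)$ in the statement of \cref{C4.9} (i.e.\ satisfying \eqref{P4.1-1} and \eqref{T1-1}),
$$d_{\al\iv}\bigl((0,-y^*),\,N_{\gph F_p}(x,y)\bigr)\ge\al.$$
Unfolding the parametric dual norm \eqref{dnorm} with $\ga=\al\iv$, this reads
$$\|u^*\|+\al\,\|w^*+y^*\|\ge\al\qdtx{for every}(u^*,w^*)\in N_{\gph F_p}(x,y).$$

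Next, by the coderivative definition \eqref{coder}, any $x^*\in D^*F_p(x,y)(v^*)$ gives a normal $(x^*,-v^*)\in N_{\gph F_p}(x,y)$. Substituting $(u^*,w^*)=(x^*,-v^*)$ into the displayed inequality produces the pointwise bound
$$\|x^*\|\ge\al\bigl(1-\|y^*-v^*\|\bigr).$$
Restricting now to $v^*\in B_\eta(y^*)$ forces $\|y^*-v^*\|<\eta$, hence $\|x^*\|>\al(1-\eta)$; taking the infimum over all such $v^*$ and over all $x^*\in D^*F_p(x,y)(v^*)$ delivers the claimed estimate $d(0,D^*F_p(x,y)(B_\eta(y^*)))\ge\al(1-\eta)$.

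I do not anticipate any real obstacle: once \cref{C3.08} is available, the proof is entirely bookkeeping between the dual norm formula \eqref{dnorm} and the definition \eqref{coder} of $D^*F_p$. The only mildly delicate point is that $B_\eta(y^*)$ is open, so the pointwise strict inequality $\|x^*\|>\al(1-\eta)$ relaxes to the claimed non-strict bound upon taking the infimum, which is the correct conclusion. The convexity of $\gph F_p$ enters only through its use in deriving \cref{C3.08} (via the convex-case sum rule of \cref{SR}(i) applied to the function $\psi_p$) and plays no further role in the coderivative-level manipulation above.
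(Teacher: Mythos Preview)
Your argument is correct and is essentially the paper's own derivation of the coderivative necessary condition (Corollary~\ref{C4.9}): start from the normal-cone estimate in \cref{C3.08}, unfold the parametric dual norm \eqref{dnorm} with $\ga=\al\iv$, translate via \eqref{coder} to get $\|x^*\|\ge\al(1-\|y^*-v^*\|)$ for $x^*\in D^*F_p(x,y)(v^*)$, and restrict to $v^*\in B_\eta(y^*)$. That is exactly the paper's proof of \cref{C4.9}.

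However, what you have written does not match the two-part statement of Corollary~\ref{T4.2}. Part~(i) of the stated corollary is the \emph{normal-cone} inequality itself (the analogue of \eqref{T2-1}); you invoke this via \cref{C3.08} as an input rather than establishing it as a conclusion, so part~(i) is not argued. Part~(ii), as stated, is formulated in terms of the (approximate) normalized enlargement $J_\ga(y)$ of the duality mapping and carries the extra hypothesis $\al<1/\sqrt{2}$; your derivation produces the $B_\eta(y^*)$-estimate of \cref{C4.9}, not the $J_\ga$-formulation, and it neither uses nor explains the threshold $1/\sqrt{2}$. So there is a genuine gap between the stated corollary and what your proof delivers.

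For context, in the paper this corollary is a commented-out remnant whose own ``proof'' is merely the placeholder ``Cf.\ the proof of (??)'', and the equation labels it cites (\texttt{T4-2}, \texttt{T3.3-2}, \texttt{P4.2-1}) are undefined in the final text. The live counterparts are \cref{C3.08} (for part~(i)) and \cref{C4.9} (for the coderivative consequence), and your write-up is a faithful proof of the latter; if the target is truly Corollary~\ref{T4.2} as stated, you would additionally need to (a) record part~(i) as a direct restatement of \cref{C3.08}, and (b) for part~(ii) pass from the $B_\eta(y^*)$ bound to the $J_\ga(y)$ formulation, which is where the restriction $\al<1/\sqrt{2}$ enters.
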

\begin{proof}
Cf. the proof of (??).
\end{proof}
}\fi
\if{
Thanks to Corollaries~\ref{T4}, \ref{T5} and \ref{T4.2}, we can obtain complete dual characterizations of Robinson subregularity.
\begin{corollary}\label{C3.9}
Let $X$ and $Y$ be Banach, $\gph F_{\bar p}$ be closed and convex.
The mapping $F$ is Robinson subregular at $(\bar p,\bx)$ if and only if one of the following conditions is satisfied:
\begin{enumerate}
\item
\begin{align*}
\sup_{\ga>0}\liminf_{\substack{x\to\bx,\,
y\to0,\,x\notin G(\bar p)\\y\in F_{\bar p}(x)\setminus\{0\},\,\|y^*\|=1,\,
\langle y^*,y\rangle=\|y\|}}d_\ga((0,-y^*),N_{\gph F_{\bar p}}(x,y))>0;
\end{align*}
\item
\begin{align*}
\liminf_{\lambda\downarrow0}
\big\{\|x^*\|\mid x^*\in D^*F_{\bar p}(x)(y^*),\,
p\in B_\lambda(\bar p),\,x\in B_\lambda(\bx)\setminus G(\bar p),\\
y\in F_{\bar p}(x)\cap (B_{\lambda}(0)\setminus\{0\}),\,
y^*\in J_\lambda(y)\big\}>0.
\end{align*}
\end{enumerate}
\end{corollary}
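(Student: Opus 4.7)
Plan. I would prove this as the coderivative analogue of \cref{C3.8}, combining the sufficient condition of \cref{C3.3}(i) with the necessary condition of \cref{C4.9}. The convexity of $\gph F_p$ makes the Clarke and Fr\'echet normal cones agree, so $D^*_CF_p = D^*_FF_p$ and the symbol $D^*$ in the corollary is unambiguous. A preliminary monotonicity observation I would record first: the inner infimum is nonincreasing as $\de$ grows, since shrinking $\de$ both tightens the outer quantification set on $(p,x,y,y^*)$ and shrinks the ball $B_\de(y^*)$ whose $D^*F_p$-image enters the distance; in particular $\lim_{\de\downarrow0}$ in the formula equals $\sup_{\de>0}$.

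Necessity. Assume $F$ is $\al-$subregular in $x$ uniformly in $p$ over $P$ at $(\bx,\by)$ with some $\de,\mu>0$. By \cref{C4.9}, for every $\eta\in]0,1[$, every $p\in P$, $x\in B_\de(\bx)\setminus F_p\iv(\by)$, $y\in F(p,x)\cap B_{\al\mu}(\by)$ with $y\ne\by$, and every $y^*$ satisfying \eqref{T1-1},
\begin{align*}
d(0,D^*F_p(x,y)(B_\eta(y^*)))\ge\al(1-\eta).
\end{align*}
For any $\de'<\min\{\de,\al\mu,1\}$ I set $\eta:=\de'$; the quantification set indexed by $\de=\de'$ in the corollary's infimum, namely $x\in B_{\de'}(\bx)\setminus F_p\iv(\by)$, $\by\ne y\in F(p,x)\cap B_{\de'}(\by)$, $\|y^*\|=1$, $\langle y^*,y-\by\rangle=\|y-\by\|$, is contained in the set on which the above lower bound applies, hence that infimum is $\ge\al(1-\de')$. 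Passing to $\de'\downarrow0$ yields the stated inequality.

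Sufficiency and main obstacle. Conversely, suppose the limit is $\ge\al$ and fix any $\al'\in]0,\al[$; by the monotonicity observation there exists $\de_0>0$ with
\begin{align*}
\inf d(0,D^*F_p(x,y)(B_{\de_0}(y^*)))\ge\al',
\end{align*}
the infimum being over the constraint set of the corollary indexed by $\de:=\de_0$. Choose $\de',\mu'>0$ with $\de'+\mu'\le\de_0$ and $\al'\mu'\le\de_0$. Every tuple $(p,x,y,y^*)$ admissible in the hypothesis of \cref{C3.3}(i) at rate $\al'$, parameters $\de',\mu'$, and $\eta:=\de_0$ then sits inside that constraint set (since $x\in B_{\de_0}(\bx)$, $\by\ne y\in B_{\de_0}(\by)$, and $y^*$ meets \eqref{T1-1}), so the coderivative inequality $d(0,D^*F_p(x,y)(B_{\de_0}(y^*)))\ge\al'$ is inherited. \cref{C3.3}(i) then supplies $\al'-$subregularity of $F$ with parameters $\de',\mu'$. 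Since $\al'\in]0,\al[$ was arbitrary, the subregularity modulus is $\ge\al$; read in the modulus sense used throughout the paper, this is the desired conclusion. The principal delicacy is precisely this rate matching: \cref{C3.3} demands the coderivative bound $\ge\al$ exactly while the hypothesis supplies it only asymptotically, and the $\al'\uparrow\al$ sweep together with the bookkeeping choice of $\de',\mu'$ fitting inside $\de_0$ is the device that resolves it.
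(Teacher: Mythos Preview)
Your proposal targets the wrong statement. What you actually argue --- an if-and-only-if characterization through
\[
\lim_{\de\downarrow0}\;\inf d\bigl(0,D^*F_p(x,y)(B_{\de}(y^*))\bigr)\ge\al,
\]
the infimum taken over $p\in P$, $x\in B_\de(\bx)\setminus F_p\iv(\by)$, $\by\ne y\in F(p,x)\cap B_\de(\by)$, and $y^*$ satisfying \eqref{T1-1} --- is indeed a corollary in the paper, obtained by combining \cref{C3.3} and \cref{C4.9} exactly as you outline. But it is \emph{not} the statement you were asked to prove: that corollary has two separate items, and your argument matches neither.

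Item~(i) is a normal-cone condition, $d_\ga\bigl((0,-y^*),N_{\gph F_{\bar p}}(x,y)\bigr)>0$, not a coderivative one; it is essentially the specialization of \cref{C3.8} to a single parameter value $\bar p$ and $\by=0$, and its proof combines \cref{T2} with \cref{C3.08}, not \cref{C3.3} with \cref{C4.9}. Item~(ii) \emph{is} a coderivative condition, but there the dual variable ranges over $y^*\in J_\lambda(y)$ --- the normalized $\lambda$-enlargement of the duality mapping --- and the quantity controlled is $\|x^*\|$ for $x^*\in D^*F_{\bar p}(x,y)(y^*)$, not a distance over a ball $B_\de(\hat y^*)$ centred at a support functional $\hat y^*$ satisfying \eqref{T1-1}. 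These are different dual constraint sets, and passing between them requires separate work; the paper's route goes through intermediate results built specifically around the $J_\lambda$ object. Your necessity and sufficiency arguments both quantify over the wrong dual set for item~(ii), and item~(i) is not addressed at all.
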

\begin{remark}
The characterization in item (i) is new, while the one in item (ii) recaptures \cite[Theorem~4]{ChuKim16}.
\end{remark}
}\fi

\section{Metric Subregularity, Metric Regularity and Implicit Multifunctions}\label{S6}

In this section, we illustrate the necessary and sufficient conditions for the uniform subregularity established in the preceding sections by characterizing several conventional properties of \SVM s.

\subsection{Metric Subregularity}

As observed in the Introduction, the conventional regularity properties in \cref{D1.1} are particular cases of the uniform regularity properties in \cref{D1.2} corresponding to $P$ being a singleton, which practically means that the \SVM\ $F$ does not involve a parameter.

The next three statements, which are immediate consequences of the corresponding `parametric' ones in \cref{S4,S5}, illustrate this observation for the case of subregularity.
Here
$X$ and $Y$ are normed spaces, $F:X\rightrightarrows Y$,  $(\bx,\by)\in\gph F$, $\al>0$, $\delta\in]0,+\infty]$ and $\mu\in]0,+\infty]$.

\begin{proposition}\label{P5.2}
\begin{enumerate}
\item
Suppose $\gph F$ is convex.
If $F$ is $\al-$subregular at $(\bx,\by)$ with $\delta$ and $\mu$, then
\begin{align}\label{P5.1-1}
\limsup_{\substack{u\to x,\,v\to y,\,(u,v)\in\gph F,\,(u,v)\ne (x,y)\\
d(u,\bx)<\de+\mu,\,d(v,\by)<\al\mu}}
{\dfrac{\|y-\by\|-\|v-\by\|}{{\|(u-x,v-y)\|_\ga}}}\ge\al
\end{align}
for $\gamma:=\al\iv$, and all
$x\in B_{\de}(\bx)$ and $y\in Y$ satisfying
\begin{align}\label{P5.1-2}
x\notin F\iv(\by),\quad
y\in F(x){\cap B_{\al\mu}(\by)}.
\end{align}

\item
Suppose $X$ and $Y$ are Banach, and $\gph F$ is closed.
If inequality \eqref{P5.1-1} holds for some $\ga>0$, and all $x\in B_{\de+\mu}(\bx)$ and $y\in Y$ satisfying \eqref{P5.1-2},
then $F$ is $\al-$subregu\-lar at $(\bx,\by)$ with  $\delta$ and $\mu$.
\end{enumerate}
\end{proposition}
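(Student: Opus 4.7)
The plan is to obtain both parts of this proposition as immediate specializations of \cref{C2.2} to the case when the parameter set $P$ is a singleton, say $P=\{p_0\}$. Under this identification, the parametric mapping $F(p_0,\cdot)\colon X\rightrightarrows Y$ coincides with the given $F$, the slice $\gph F_{p_0}$ coincides with $\gph F$, and the solution set $F_{p_0}^{-1}(\by)$ coincides with $F^{-1}(\by)$. Consequently, the uniform $\al$-subregularity in $x$ uniformly in $p$ over $P$ at $(\bx,\by)$ from \cref{D1.2}(ii) reduces exactly to the conventional $\al$-subregularity at $(\bx,\by)$ from \cref{D1.1}(ii), and the conditions in \eqref{P4.1-1} reduce to those in \eqref{P5.1-2}.

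The only mildly technical point is verifying that the slope inequality \eqref{C1-1} rewrites to \eqref{P5.1-1} in the normed space setting. Since $X$ and $Y$ are normed, $d(u,\bx)=\|u-\bx\|$, $d(v,\by)=\|v-\by\|$, $d(y,\by)=\|y-\by\|$, and by the definition \eqref{pdist} of $d_\ga$ together with the product norm \eqref{pnorm},
\begin{align*}
d_\ga((u,v),(x,y))=\max\{\|u-x\|,\ga\|v-y\|\}=\|(u-x,v-y)\|_\ga.
\end{align*}
Thus the expression under the limsup in \eqref{C1-1} is identical to the one in \eqref{P5.1-1}.

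For part (i), assuming $\gph F$ is convex and $F$ is $\al$-subregular at $(\bx,\by)$ with $\de$ and $\mu$, the singleton version of \cref{C2.2}(i) applies (its convexity hypothesis requires $\gph F_p$ convex for all $p\in P$, which here means only $\gph F$ convex), yielding \eqref{P5.1-1} for $\ga:=\al^{-1}$ and all $x\in B_\de(\bx)$, $y\in Y$ satisfying \eqref{P5.1-2}. For part (ii), when $X,Y$ are Banach and $\gph F$ is closed, the hypotheses of \cref{C2.2}(ii) are met with $P=\{p_0\}$, so if \eqref{P5.1-1} holds for some $\ga>0$ and all $x\in B_{\de+\mu}(\bx)$, $y\in Y$ satisfying \eqref{P5.1-2}, then $F$ is $\al$-subregular at $(\bx,\by)$ with $\de$ and $\mu$.

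There is no real obstacle; the proof is essentially a bookkeeping exercise of translating the parametric statement into the non-parametric one. The only choice worth flagging is the identification of metric-space distances with norm-induced distances, which is why \eqref{P5.1-1} is stated using the product norm $\|\cdot\|_\ga$ rather than the metric $d_\ga$ used in \eqref{C1-1}.
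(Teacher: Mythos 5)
Your proposal is correct and matches the paper's approach exactly: the paper's proof is the one-line remark ``The statement is a consequence of \cref{C2.2}'', i.e.\ specialization to a singleton parameter set, and you have simply spelled out the routine bookkeeping (identification of $F_{p_0}$ with $F$, of \eqref{P4.1-1} with \eqref{P5.1-2}, and of the metric $d_\ga$ with the product norm $\|\cdot\|_\ga$) that the paper leaves implicit.
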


\begin{proof}
The statement is a consequence of \cref{C2.2}.
\end{proof}

\begin{proposition}
\begin{enumerate}
\item
Suppose $\gph F$ is convex.
If $F$ is $\al-$subregu\-lar at $(\bx,\by)$ with $\delta$ and $\mu$, then
\begin{align}\label{P5.2-1}
d_\ga((0,-y^*),N_{\gph F}(x,y))\ge\al
\end{align}
for $\ga:=\al\iv$, all $x\in B_{\de}(\bx)$ and $y\in Y$ satisfying \eqref{P5.1-2}, and all $y^*\in Y^*$ satisfying~\eqref{T1-1}.

\item
Suppose $X$ and $Y$ are Banach, and $\gph F$ is closed.
The mapping $F$ is $\al-$subregu\-lar at $(\bx,\by)$ with $\delta$ and $\mu$ if, for some $ \ga>0$, and all ${x\in B_{\de+\mu}(\bx)}$ and $y\in Y$ satisfying \eqref{P5.1-2}, one of the following conditions is satisfied:
\begin{enumerate}
\item
inequality \eqref{P5.2-1} holds with $N:=N^C$
for all $y^*\in Y^*$ satisfying \eqref{T1-1};
\item
$X$ and $Y$ are Asplund, and there exists a $\tau\in]0,1[$ such that inequality \eqref{P5.2-1} holds with ${N:=N^F}$ for all  $y^*\in Y^*$ satisfying \eqref{T1-2}.
\end{enumerate}
\end{enumerate}
\end{proposition}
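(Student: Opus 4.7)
The plan is to deduce both parts as immediate specializations of the corresponding parametric results in \cref{S5} to the case when the parameter set is a singleton. Concretely, fix an arbitrary $p_0$, set $P := \{p_0\}$, and define $\widetilde F : P\times X \rightrightarrows Y$ by $\widetilde F(p_0,x) := F(x)$, so that $\widetilde F_{p_0} = F$. Under this identification, $\widetilde F$ is $\al$-subregular in $x$ uniformly in $p$ over $P$ at $(\bx,\by)$ with $\de$ and $\mu$ if and only if $F$ is $\al$-subregular at $(\bx,\by)$ with $\de$ and $\mu$; the graph $\gph \widetilde F_p$ is convex (resp.\ closed) for all $p \in P$ precisely when $\gph F$ is convex (resp.\ closed); condition \eqref{P4.1-1} reduces to \eqref{P5.1-2}; and inequality \eqref{T2-1}, with $d_\ga$ built from the dual norm \eqref{dnorm}, coincides with \eqref{P5.2-1}.

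For part~(i), I would apply \cref{C3.08} to $\widetilde F$. The standing convexity hypothesis on $\gph \widetilde F_p$ is met by assumption, and the conclusion of that theorem yields \eqref{P5.2-1} with $\ga := \al\iv$ for every $x \in B_\de(\bx)$ and $y \in Y$ satisfying \eqref{P5.1-2}, and every $y^* \in Y^*$ satisfying \eqref{T1-1}. That is precisely the claim.

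For part~(ii), I would apply \cref{T2} to $\widetilde F$; its standing closedness hypothesis on $\gph \widetilde F_p$ is met since $\gph F$ is assumed closed. Condition~(a) of the present proposition is the singleton instance of the Clarke condition in \cref{T2}(i) (i.e.\ $N := N^C$ together with \eqref{T1-1}), while condition~(b) is the singleton instance of the Fr\'echet/Asplund condition in \cref{T2}(ii) (i.e.\ $N := N^F$ together with \eqref{T1-2} and a parameter $\tau \in \, ]0,1[$). In either case, \cref{T2} yields uniform $\al$-subregularity of $\widetilde F$, which is the desired $\al$-subregularity of $F$.

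There is no substantive obstacle here: the proposition is a corollary of the parametric theory, obtained by instantiating with a trivial parameter space. The only thing worth checking carefully is that the auxiliary parameters ($\de$, $\mu$, $\ga$, $\tau$, the ball radii in $X$ and $Y$, and the two alternative side conditions \eqref{T1-1}/\eqref{T1-2} on $y^*$) transport unchanged through the specialization, which they manifestly do. Since both \cref{C3.08,T2} are already proved in the parametric setting, no additional estimates, sum rules, or variational arguments are needed at this stage.
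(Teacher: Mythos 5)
Your proposal is correct and matches the paper's own proof, which derives the statement as an immediate consequence of \cref{T2,C3.08} by specializing the parametric setting to a singleton parameter set. The identifications you check (graph convexity/closedness, the reduction of \eqref{P4.1-1} to \eqref{P5.1-2}, and of \eqref{T2-1} to \eqref{P5.2-1}) are exactly what is implicitly used there.
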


\begin{proof}
The statement is a consequence of \cref{T2,C3.08}.
\end{proof}

\begin{proposition}\label{P5.3}
\begin{enumerate}
\item
Suppose $\gph F$ is convex.
If $F$ is $\al-$subregular at $(\bx,\by)$ with $\delta$ and $\mu$, then $d(0,D^*F(x,y)(B_{\eta}(y^*)))\ge\al(1-\eta)$
for any $\eta\in]0,1[$, all $x\in B_{\de}(\bx)$ and $y\in Y$ satisfying \eqref{P5.1-2},
and all $y^*\in Y^*$ satisfying~\eqref{T1-1}.
\sloppy

\item
Suppose $X$ and $Y$ are Banach, and $\gph F$ is closed.
The mapping $F$ is $\al-$subregular at $(\bx,\by)$ with $\delta$ and $\mu$ if, for some $\eta\in]0,+\infty]$, and all $x\in B_{\de+\mu}(\bx)$ and $y\in Y$ satisfying \eqref{P5.1-2}, one of the following conditions is satisfied:
\begin{enumerate}
\item
with $D^*:=D^*_C$, for all $y^*\in Y^*$ satisfying \eqref{T1-1},
it holds
\begin{align}\label{P5.3-1}
d(0,D^*F(x,y)(B_{\eta}(y^*)))\ge\al;
\end{align}

\item
$X$ and $Y$ are Asplund, and there exists a $\tau\in]0,1[$ such that inequality \eqref{P5.3-1} holds with ${D^*:=D^*_F}$ for all $y^*\in Y^*$ satisfying \eqref{T1-2}.
\end{enumerate}
\end{enumerate}
\end{proposition}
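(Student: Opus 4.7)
The plan is to derive both parts of the statement by direct specialization of the parametric coderivative conditions established in the preceding section — \cref{C4.9} for the necessary part (i) and \cref{C3.3} for the sufficient part (ii) — to the case in which the parameter set $P$ consists of a single point.

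First I would fix a singleton $P = \{p_0\}$ and identify the \SVM\ $F : X \rightrightarrows Y$ of the present statement with the parametric mapping $\widetilde F : P \times X \rightrightarrows Y$ defined by $\widetilde F(p_0, x) := F(x)$. Under this identification $\widetilde F_{p_0} = F$, hence $\widetilde F_{p_0}^{-1}(\by) = F\iv(\by)$, the admissibility condition \eqref{P4.1-1} coincides with \eqref{P5.1-2}, and $\al$-subregularity in $x$ uniformly in $p$ over $P$ at $(\bx,\by)$ in the sense of \cref{D1.2}(ii) reduces precisely to $\al$-subregularity of $F$ at $(\bx,\by)$ in the sense of \cref{D1.1}(ii). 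Moreover $\gph \widetilde F_{p_0} = \gph F$, so closedness (resp.\ convexity) of $\gph F$ transfers to $\gph \widetilde F_{p_0}$, and the coderivatives $D^\ast\widetilde F_{p_0}(x,y)$ and $D^\ast F(x,y)$ (Clarke or \Fr, as appropriate) coincide.

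With these identifications, part (i) follows immediately from \cref{C4.9} applied to $\widetilde F$, which under the convexity assumption delivers the estimate $d(0, D^\ast F(x,y)(B_\eta(y^\ast))) \ge \al(1-\eta)$ for every $\eta \in\,]0,1[$, every $x \in B_\de(\bx)$ and $y \in Y$ with \eqref{P5.1-2}, and every $y^\ast$ with \eqref{T1-1}. Part (ii) splits naturally into the two cases of \cref{C3.3}: assumption (a) invokes \cref{C3.3}(i) (Clarke coderivative in a general Banach space under \eqref{T1-1}), while assumption (b) invokes \cref{C3.3}(ii) (\Fr\ coderivative in Asplund spaces under the relaxed condition \eqref{T1-2}); in each case the singleton structure of $P$ makes the universal quantifier over $p\in P$ vacuous, and the conclusion is exactly $\al$-subregularity of $F$ at $(\bx,\by)$ with $\de$ and $\mu$.

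There is no substantive obstacle here — the whole purpose of the parametric framework set up in \cref{S4,S5} is to make non-parametric corollaries such as \cref{P5.3} follow by this singleton reduction. The only care required is bookkeeping: checking that the parameter $\ga$ in \cref{C3.3} is matched to the parameter $\eta$ in \eqref{P5.3-1} exactly as in the proof of \cref{C3.3} (namely $\ga := \al\iv\eta$), and that the equality/inequality constraints \eqref{T1-1} and \eqref{T1-2} are transcribed verbatim from the parametric setting. Once that is verified, the proof reduces to a single line in each direction, as is reflected by the one-sentence proof of the preceding \emph{Proposition} which the authors deduce analogously from \cref{T2,C3.08}.
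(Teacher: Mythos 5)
Your proposal is correct and follows exactly the paper's own route: the authors also prove \cref{P5.3} by observing that it is an immediate consequence of \cref{C3.3} (for the sufficiency part) and \cref{C4.9} (for the necessity part) under the singleton-parameter-set reduction. The bookkeeping you describe (identifying $\gph \widetilde F_{p_0}$ with $\gph F$ and matching the constraints \eqref{T1-1}, \eqref{T1-2} and \eqref{P5.1-2} with their parametric counterparts) is all that is needed, and the paper leaves it implicit.
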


\begin{proof}
The statement is a consequence of \cref{C3.3,C4.9}.
\end{proof}

\begin{remark}
\begin{enumerate}
\item
In \cref{P5.2}(ii), it is sufficient to assume that $X$ and $Y$ are complete metric spaces (with distances in place of norms in condition \eqref{P5.1-1}), or even that $\gph F$ is complete; cf. \cref{R2.1}(iii).
In this setting, the sufficient condition in \cref{P5.2}(ii) can be viewed as a quantitative version of \cite[Corollary~5.8(d)]{Kru15} and \cite[Theorem~2.4(a)]{Iof17.1}.

\item
Proposition \ref{P5.3} improves \cite[Theorem~5.3]{LiMor12}.
In the linear setting, part (ii) of this proposition improves \cite[Theorem~3.3]{LiMor12}, \cite[Theorem~6]{NgaTroThe13}, \cite[Theorem~8]{ChuKim16},
\cite[Theorem~2.6]{Iof17.1},
and the corresponding parts of \cite[Corollary~5.8]{Kru15}.
\cref{P5.3}(ii) with condition (a) recaptures  \cite[Theorem~3.2]{ZheNg10}.
\end{enumerate}	
\end{remark}	

\subsection{Metric Regularity}

The conventional metric regularity is a particular case of the uniform regularity property in \cref{D1.2}(i) corresponding to $P$ being a singleton.
At the same time, as it follows from the observation in \cref{R1.1}, in the normed space setting it can be treated as a particular case of the uniform subregularity property in \cref{D1.2}(ii) for the \SVM\ $\widehat{F}(y,x):={F(x)-y}$, $(y,x)\in Y\times X$ with $y$ considered as a parameter.
Obviously $(\bx,\by)\in\gph F$ if and only if $(\by,\bx,0)\in\gph\widehat{F}$.

The next three statements, which are immediate consequences of the corresponding `parametric' ones in \cref{S4,S5}, illustrate the above observation.
Here
$X$ and $Y$ are normed spaces, $F:X\rightrightarrows Y$,  $(\bx,\by)\in\gph F$, $\al>0$, $\delta\in]0,+\infty]$ and $\mu\in]0,+\infty]$.

\begin{proposition}\label{P5.4}
\begin{enumerate}
\item
Suppose $\gph F$ is convex.
If $F$ is $\al-$regular at $(\bx,\by)$ with $\delta$ and $\mu$, then
\begin{align}\label{P5.4-1}
\limsup_{\substack{u\to x,\,v\to z,\,(u,v)\in\gph F,\,(u,v)\ne (x,z)\\
d(u,\bx)<\de+\mu,\,d(v,y)<\al\mu}}
{\dfrac{\|z-y\|-\|v-y\|}{{\|(u-x,v-z)\|_\ga}}}\ge\al
\end{align}
for $\gamma:=\al\iv$, and all
{$x\in B_\de(\bx)$}, $y\in B_{\de}(\by)$ and $z\in Y$ satisfying
\begin{align}\label{P5.4-2}
{x\notin F\iv(y)},\quad
z\in F(x){\cap B_{\al\mu}(y)}.
\end{align}

\item
Suppose $X$ and $Y$ are Banach spaces, and $\gph F$ is closed.
The mapping $F$ is $\al-$regular at $(\bx,\by)$ with $\delta$ and $\mu$ if inequality \eqref{P5.4-1}
holds with some $\ga>0$ for all {$x\in B_{\de+\mu}(\bx)$}, $y\in B_{\de}(\by)$ and $z\in Y$ satisfying \eqref{P5.4-2}.
\end{enumerate}
\end{proposition}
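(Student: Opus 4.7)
The approach is to reduce the statement to the parametric uniform subregularity result in \cref{C2.2} applied to the auxiliary mapping
\begin{align*}
\widehat F\colon Y\times X\rightrightarrows Y,\qquad \widehat F(y,x):=F(x)-y,
\end{align*}
viewed as a set-valued mapping in $x$ with parameter $y$ running over $P:=B_\de(\by)$, and with reference point $(\bx,0)$. Three elementary observations underpin everything: (a) $\widehat F_y\iv(0)=F\iv(y)$, hence $d(0,\widehat F(y,x))=d(y,F(x))$ and $d(x,\widehat F_y\iv(0))=d(x,F\iv(y))$; (b) $\gph\widehat F_y$ is the affine translate $\{(u,v-y):(u,v)\in\gph F\}$ of $\gph F$, so it is convex (respectively, closed) whenever $\gph F$ is; and (c) by (a), $\al$-regularity of $F$ at $(\bx,\by)$ with $\de,\mu$ in the sense of \cref{D1.1}(i) is verbatim the same condition as $\al$-subregularity of $\widehat F$ in $x$ uniformly in $y$ over $P=B_\de(\by)$ at $(\bx,0)$ with $\de,\mu$ in the sense of \cref{D1.2}(ii).

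For part (i), convexity of $\gph F$ and observation (b) allow me to apply \cref{C2.2}(i) to $\widehat F$ with $\widehat\bx:=\bx$, $\widehat\by:=0$ and parameter set $P$. The resulting slope inequality is to be verified at points $(x,y,\widehat y)$ with $x\in B_\de(\bx)$, $y\in P$, $\widehat y\in\widehat F(y,x)\cap B_{\al\mu}(0)$ and $x\notin\widehat F_y\iv(0)$; I recast these in the original variables by setting $z:=\widehat y+y\in F(x)$, so that $\|z-y\|<\al\mu$ and $x\notin F\iv(y)$, which is exactly \eqref{P5.4-2}. Test points $(\widehat u,\widehat v)\in\gph\widehat F_y$ become $(u,v)$ with $v:=\widehat v+y\in F(u)$; the numerator $\|\widehat y\|-\|\widehat v\|$ rewrites as $\|z-y\|-\|v-y\|$, the denominator $\|(\widehat u-x,\widehat v-\widehat y)\|_\ga$ becomes $\|(u-x,v-z)\|_\ga$, and the side conditions $\|\widehat u-\bx\|<\de+\mu$, $\|\widehat v\|<\al\mu$ become $\|u-\bx\|<\de+\mu$, $\|v-y\|<\al\mu$. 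This is precisely \eqref{P5.4-1} for $\ga:=\al\iv$.

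Part (ii) is the mirror image: with $X,Y$ Banach and $\gph F$ closed, observation (b) gives $\gph\widehat F_y$ closed for every $y\in P$, so \cref{C2.2}(ii) is applicable to $\widehat F$. Reading the same change of variables in reverse, the hypothesis \eqref{P5.4-1} for all $(x,y,z)$ meeting \eqref{P5.4-2} with $x\in B_{\de+\mu}(\bx)$ is exactly the slope hypothesis \eqref{C1-1} for $\widehat F$ with the same $\ga$; therefore $\widehat F$ is $\al$-subregular in $x$ uniformly in $y$ over $P$ at $(\bx,0)$ with $\de,\mu$, and observation (c) returns the desired $\al$-regularity of $F$. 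No deep obstacle arises: the argument is pure translation through the substitutions $\widehat F(y,x)=F(x)-y$, $z=\widehat y+y$, $v=\widehat v+y$. The only delicate point is the bookkeeping around the shift, in particular recognising that the fixed target $\widehat\by=0$ for $\widehat F$ corresponds under the translation to the moving point $y$ (not $\by$) in the original coordinates, which is what accounts for the asymmetric restriction $d(v,y)<\al\mu$ in \eqref{P5.4-1}.
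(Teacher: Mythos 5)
Your proposal is correct and follows essentially the same route as the paper: the paper's proof is exactly the reduction to \cref{C2.2} via the auxiliary mapping $\widehat{F}(y,x):=F(x)-y$ with $y$ treated as the parameter, as announced in \cref{R1.1}(i) and at the start of the metric regularity subsection. You have merely written out the change-of-variables bookkeeping that the paper leaves implicit, and your translation of the test points, constraints and slope quotient is accurate.
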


\begin{proof}
The statement is a consequence of \cref{C2.2}.
\end{proof}

\if{
\AK{14/06/20.
Should the $\de+\mu$ estimate in the above proposition be applied to $x$ instead of $y$?}
\NDC{17/06/20.
Yes, I think it should be.
I think the condition $y\in B_\de(\by)$ is nothing else but the uniformity in the definition of metric regularity.}
}\fi

\begin{proposition}
\begin{enumerate}
\item
Suppose $\gph F$ is convex.
If $F$ is $\al-$regular at $(\bx ,\by)$ with $\delta$ and $\mu$, then
\begin{align}\label{T2-5}
d_\ga((0,-y^*),N_{\gph F} (x,z))\ge\al
\end{align}
for $\gamma:=\al\iv$, and all {$x\in B_\de(\bx)$}, $y\in B_{\de}(\by)$ and $z\in Y$ satisfying \eqref{P5.4-2}, and $y^*\in Y^*$ satisfying
\begin{align}\label{T2-8}
\|y^*\|=1,\quad\langle y^*,z-y\rangle=\|z-y\|.
\end{align}

\item
Suppose $X$ and $Y$ are Banach, and $\gph F$ is closed.
The mapping $F$ is $\al-$regular at $(\bx,\by)$ with  $\delta$ and $\mu$ if,
for some $\ga>0$, and all {$x\in B_{\de+\mu}(\bx)$}, {$y\in B_{\de}(\by)$} and $z\in Y$ satisfying \eqref{P5.4-2}, one of the following conditions holds:
\begin{enumerate}
\item
inequality \eqref{T2-5} holds with $N:=N^C$
for all $y^*\in Y^*$ satisfying
\eqref{T2-8};
\item
$X$ and $Y$ are Asplund, and there exists a $\tau\in]0,1[$ such that inequality \eqref{T2-5} holds with ${N:=N^F}$ for all $y^*\in Y^*$ satisfying
\begin{align}\label{T2-9}
\|y^*\|=1,\quad\langle y^*,z-y\rangle>\tau\|z-y\|.
\end{align}
\end{enumerate}
\end{enumerate}
\end{proposition}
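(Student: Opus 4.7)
The plan is to derive both parts from the uniform subregularity results of \cref{T2,C3.08} via the reduction recorded in \cref{R1.1}(i). Introduce the auxiliary mapping $\widehat{F}: Y\times X\rightrightarrows Y$ defined by $\widehat{F}(y,x):=F(x)-y$, treating $y$ as the parameter drawn from $P:=B_\de(\by)$. Since $\widehat{F}_y^{-1}(0)=F^{-1}(y)$ and $d(0,\widehat{F}(y,x))=d(y,F(x))$, a direct unpacking of \cref{D1.1,D1.2} shows that $F$ is $\al$-regular at $(\bx,\by)$ with $\de$ and $\mu$ if and only if $\widehat{F}$ is $\al$-subregular in $x$ uniformly in $y$ over $P$ at $(\bx,0)$ with $\de$ and $\mu$. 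This mirrors the strategy already used in the proof of \cref{P5.4} for the slope form.

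The second preparatory step is the graph identity $\gph \widehat{F}_y=\{(x,z-y)\mid (x,z)\in\gph F\}$, i.e., $\gph \widehat{F}_y$ is the translate of $\gph F$ by $(0,-y)$. Consequently, convexity of $\gph F$ is inherited by each $\gph \widehat{F}_y$ (and likewise closedness), and for every $(x,v):=(x,z-y)\in\gph \widehat{F}_y$ one has
\begin{equation*}
N_{\gph \widehat{F}_y}(x,v)=N_{\gph F}(x,z),
\end{equation*}
with the same type of normal cone (convex, Clarke, or \Fr) on both sides. Moreover, the conditions $x\notin \widehat{F}_y^{-1}(0)$ and $v\in \widehat{F}(y,x)\cap B_{\al\mu}(0)\setminus\{0\}$ rewrite as \eqref{P5.4-2}, and the conditions $\|y^*\|=1$, $\langle y^*,v-0\rangle=\|v\|$ (respectively $\langle y^*,v-0\rangle>\tau\|v\|$) become \eqref{T2-8} (respectively \eqref{T2-9}) after the substitution $v=z-y$.

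For part (i), apply \cref{C3.08} to $\widehat{F}$ over the parameter set $P=B_\de(\by)$ at the point $(\bx,0)$: since $\gph\widehat{F}_y$ is convex, the theorem yields $d_\ga((0,-y^*),N_{\gph\widehat{F}_y}(x,v))\ge\al$ with $\ga:=\al^{-1}$ for every admissible triple $(y,x,v)$ and every qualifying $y^*$. Translating via $v=z-y$ and the normal cone identity produces inequality \eqref{T2-5} under precisely the hypotheses stated in part (i). For part (ii), run the argument in reverse: the hypothesized inequality \eqref{T2-5} (with $N^C$ or, in the Asplund case, $N^F$ together with the parameter $\tau$) becomes the corresponding hypothesis of \cref{T2} applied to $\widehat{F}$, whose conclusion is that $\widehat{F}$ is $\al$-subregular uniformly in $y$ over $P$ at $(\bx,0)$ with $\de$ and $\mu$, equivalently that $F$ is $\al$-regular at $(\bx,\by)$ with $\de$ and $\mu$.

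No step poses a substantial obstacle; the only point requiring care is bookkeeping the balls $B_\de(\bx)$, $B_{\de+\mu}(\bx)$, and $B_{\al\mu}(y)$ through the translation in the $Y$-component and confirming that the reference point for $\widehat{F}$ is $(\bx,0)$ while the parameter $y$ ranges over $B_\de(\by)$ rather than a neighborhood of a point in some abstract metric space. The rest of the argument is a mechanical transport of the corresponding uniform subregularity statements, exactly parallel to the reductions already performed for the slope and coderivative versions in this subsection.
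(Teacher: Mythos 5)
Your proposal is correct and follows exactly the paper's route: the paper proves this proposition by the one-line remark that it is a consequence of \cref{T2,C3.08}, relying on the reduction (announced at the start of the subsection and in \cref{R1.1}(i)) to uniform subregularity of $\widehat{F}(y,x):=F(x)-y$ with $y$ as the parameter. You have merely written out in full the translation of graphs, normal cones and constraint sets that the paper leaves implicit, and this bookkeeping is accurate.
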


\begin{proof}
The statement is a consequence of \cref{T2,C3.08}.
\end{proof}

\begin{proposition}\label{P5.6}
\begin{enumerate}
\item
Suppose $\gph F$ is convex.
If $F$ is $\al-$regular at $(\bx,\by)$ with $\delta$ and $\mu$, then $d(0,D^*F(x,z)(B_{\eta}(y^*)))\ge\al(1-\eta)$
for all ${\eta\in]0,1[}$, {$x\in B_\de(\bx)$}, $y\in B_{\de}(\by)$ and $z\in Y$ satisfying \eqref{P5.4-2},
and all $y^*\in Y^*$ satisfying \eqref{T2-8}.
\sloppy
\item
Suppose $X$ and $Y$ are Banach, and $\gph F$ is closed.
The mapping $F$ is $\al-$regular at $(\bx,\by)$ with  $\delta$ and $\mu$ if,
for some $\eta\in]0,+\infty]$ and all {$x\in B_{\de+\mu}\bx)$}, {$y\in B_{\de}(\by)$} and $z\in Y$ satisfying \eqref{P5.4-2},
one of the following conditions holds:
\begin{enumerate}
\item
with $D^*:=D^*_C$, for all $y^*\in Y^*$ satisfying \eqref{T2-8},
it holds
\begin{align}\label{C3.3-3}
d(0,D^*F(x,z)(B_{\eta}(y^*)))\ge\al;
\end{align}
\item
$X$ and $Y$ are Asplund, and there exists a $\tau\in]0,1[$ such that inequality \eqref{C3.3-3} holds with ${D^*:=D^*_F}$ for all $y^*\in Y^*$ satisfying \eqref{T2-9}.
\end{enumerate}
\end{enumerate}
\end{proposition}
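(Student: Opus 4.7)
The plan is to derive both parts of the proposition by reducing to the corresponding statements on uniform subregularity proved earlier, exactly as suggested by \cref{R1.1}(i). I introduce the auxiliary \SVM\ $\widehat{F}:Y\times X\toto Y$ defined by $\widehat{F}(y,x):=F(x)-y$, with $y$ treated as a parameter ranging over the set $P:=B_\de(\by)$. A routine unpacking of definitions, exploiting $\widehat{F}_y\iv(0)=F\iv(y)$ and $d(0,\widehat{F}(y,x))=d(y,F(x))$, shows that $F$ is $\al-$regular at $(\bx,\by)$ with $\de$ and $\mu$ if and only if $\widehat{F}$ is $\al-$subregular in $x$ uniformly in $y$ over $P$ at $(\bx,0)$ with $\de$ and $\mu$. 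The closedness of $\gph F$ (in part (ii)) and the convexity of $\gph F$ (in part (i)) transfer to each $\gph\widehat{F}_y$, since the latter is the image of $\gph F$ under the linear homeomorphism $(x,z)\mapsto(x,z-y)$.

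The central identity is the coderivative translation: for any $y\in Y$, $(x,v)\in\gph\widehat{F}_y$, and $v^*\in Y^*$,
\[
D^*\widehat{F}_y(x,v)(v^*)=D^*F(x,v+y)(v^*),
\]
because the normal cones to the two graphs are carried onto each other by the same linear bijection, and this holds for both Fr\'echet and Clarke variants. Setting $z:=v+y$, the conditions $x\notin\widehat{F}_y\iv(0)$ and $v\in\widehat{F}(y,x)\cap B_{\al\mu}(0)$ become precisely \eqref{P5.4-2}; the analog of \eqref{T1-1} for $\widehat{F}$ at target $0$, namely $\|v^*\|=1$ and $\langle v^*,v\rangle=\|v\|$, becomes \eqref{T2-8}; and the analog of \eqref{T1-2} becomes \eqref{T2-9}. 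With these translations in hand, part (i) is obtained by applying \cref{C4.9} to $\widehat{F}$, and part (ii) follows by applying \cref{C3.3} to $\widehat{F}$, in both the general Banach case (a) and the Asplund case (b).

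The step I expect to require the most care is the bookkeeping of the various neighborhood parameters: $F$ is regular with parameters $\de$ and $\mu$, whereas uniform subregularity of $\widehat{F}$ is expressed over $P=B_\de(\by)$ with the same $\de$ and $\mu$, and one must verify that the variable $y$ in the subregularity statement of $\widehat{F}$ simultaneously plays the roles of parameter in $P$ and of perturbation in the original regularity condition for $F$, including the compatibility of the constraint $d(y,F(x))<\al\mu$ on both sides. No additional variational-analytic machinery beyond \cref{C3.3,C4.9} is needed; the proof is a translation exercise that mirrors the reduction already used in the preceding metric subregularity subsection.
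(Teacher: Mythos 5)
Your proposal is correct and follows exactly the route the paper intends: the reduction to uniform subregularity via $\widehat{F}(y,x):=F(x)-y$ with $y$ as a parameter over $B_\de(\by)$ is precisely the observation stated at the start of the paper's metric regularity subsection, and the paper's proof is simply the citation of \cref{C3.3,C4.9} that you apply. Your spelled-out translation of the graphs, normal cones, coderivatives and the conditions \eqref{P5.4-2}, \eqref{T2-8}, \eqref{T2-9} is accurate.
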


\begin{proof}
The statement is a consequence of \cref{C3.3,C4.9}.
\end{proof}

\begin{remark}
\begin{enumerate}
\item
In the normed space setting, the sufficient condition in Propositi\-on~\ref{P5.4}(ii) can be viewed as a quantitative version of \cite[Theorem~1]{Iof00} and \cite[Theorem~2.4(a)]{Iof17.1}.

\item
\cref{P5.6}(ii) enhances \cite[Theorem~3.7]{Chu15}.	
\cref{P5.6}(ii) with condition (a) improves
\cite[Corollary~3.1]{HuyKimNin12} and \cite[Theorem~3.5]{Chu15} (in the linear case), while with condition (b) it
improves (in the linear case) \cite[Theorem~3.1]{Chu15} and \cite[Theorem~7]{ChuKim16}.
\end{enumerate}
\end{remark}

\subsection{Implicit Multifunctions}

Now we get back to the implicit multifunction \eqref{G} and consider its particular case corresponding to the parametric inclusion $\by\in F(p,x)$ (with fixed \LHS), i.e.
\begin{align}\label{G2}
G(p):=\{x\in X\mid \by\in F(p,x)\},\quad p\in P,
\end{align}
where $F:P\times X\rightrightarrows Y$, and $P$, $X$ and $Y$ are metric spaces.
Stability properties of implicit multifunctions, i.e. solution sets of parametric inclusions, are of great importance for many applications and have been the subject of numerous publications; cf., e.g., \cite{Rob75.2,Rob76.2,Bor86,LedZhu99,AzeCorLuc02,KlaKum02, NgaThe04,AzeBen08, LeeTamYen08,HuyYao09,YenYao09,ChiYaoYen10, ChuKruYao11, HuyKimNin12,NgaTroThe13,DonRoc14,Ngh14, Chu15,ChuKim16,GfrOut16.2,Iof17,Iof17.1,Ude21}.
Here, for illustration, we focus on the most well known perturbation stability property of \SVM s called \emph{Aubin property}; cf. \cite{DonRoc14,Mor06.1}.

\begin{definition}
A mapping $G:P\rightrightarrows X$ between metric spaces has the Aubin property at $(\bp,\bx)\in\gph G$ with rate $l>0$ if there exist $\eta\in]0,+\infty]$, $\delta\in]0,+\infty]$ and $\mu\in]0,+\infty]$ such that
\begin{align*}
d(x,G(p))\le l\;d(p,p')
\end{align*}
for all $p,p'\in B_\eta(\bp)$ with $d(p,p')<\mu$, and $x\in G(p')\cap B_\de(\bx)$.
\end{definition}

Similar to Definitions~\ref{D1.1}, \ref{D1.2} and \ref{D1.3}, the inequality $d(p,p')<\mu$ is not essential in the above definition and can be dropped together with the constant $\mu$.
We keep them for consistency with the definitions and characterizations in the preceding sections.
We also establish connections between the constant $\mu$ and the corresponding constants in the other definitions.

Given a point ${(\bp,\bx,\by)\in\gph F}$ and a number $\al>0$, the uniform
$\al-$subregula\-rity property of $F$ at $(\bp,\bx,\by)$ in \cref{D1.3}(ii) means that
there exist $\eta\in]0,+\infty]$, ${\delta\in]0,+\infty]}$ and $\mu\in]0,+\infty]$ such that
\begin{align}\label{43}
\al d(x,G(p))\le d(\by,F(p,x))
\end{align}
for all $p\in B_\eta(\bp)$ and $x\in B_\de(\bx)$ with ${d(\by,F(p,x))}<\al\mu$.
Several primal and dual sufficient and necessary conditions for this property have been formulated in the preceding sections.

Inequality \eqref{43} provides an estimate for the distance from $x$ to the value of the implicit multifunction \eqref{G2} at $p$ in terms of the residual of the parametric inclusion.
However, this estimate does not say much about the behaviour of the implicit multifunction.
An additional assumption on the mapping $F$ is needed, which would allow one to get rid of $F$ in the \RHS\ of the inequality \eqref{43}.
This additional assumption is given in the next definition, which is a modification of the second part of \cite[Definition~3.1]{Iof17.1}, where we borrow the terminology from.
A similar property was considered in \cite{KlaKum02}, where the authors used the name \emph{Lipschitz lower semicontinuity}.

\begin{definition}
Let $l>0$.
The mapping $F$ is said to $l-$recede in $p$ uniformly in $x$ at $(\bp,\bx,\by)$ if there exist $\eta\in]0,+\infty]$, $\delta\in]0,+\infty]$ and $\mu\in]0,+\infty]$ such that
\begin{align}\label{D5.1-1}
d(\by,F(p,x))\le l d(p,p')
\end{align}
for all $x\in B_\de(\bx)$ and $p,p'\in B_\eta(\bp)$ with
$d(p,p')<\mu$ and $\by\in F(p',x)$.
\end{definition}

In what follows we assume that ${(\bp,\bx,\by)\in\gph F}$, $\al>0$, $l>0$, $\eta\in]0,+\infty]$, $\delta\in]0,+\infty]$ and $\mu\in]0,+\infty]$.
The next statement is a modification of \cite[Theorem~3.2]{Iof17.1}.

\begin{proposition}\label{P5.7}
Suppose that $F$
\begin{itemize}
\item
is $\al-$subregular in $x$ uniformly in $p$ at $(\bp,\bx,\by)$ with $\eta$, $\delta$ and $\mu$;
\item
$l-$recedes in $p$ uniformly in $x$ at $(\bp,\bx,\by)$ with $\eta$, $\de$ and $\mu':=\al\mu/l$.
\end{itemize}
Then the mapping $G$ given by \eqref{G2} has the Aubin property at $(\bp,\bx)$ with rate $l/\al$, and $\eta$, $\de$ and $\mu'$.
\end{proposition}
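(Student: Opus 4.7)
The plan is to chain the two hypotheses together in a single short argument, with the bulk of the work consisting of checking that the parameter $\mu'=\alpha\mu/l$ makes the activation condition of the subregularity inequality trigger from the conclusion of the receding estimate.

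First I would fix arbitrary $p,p'\in B_\eta(\bp)$ with $d(p,p')<\mu'$ and $x\in G(p')\cap B_\de(\bx)$; by \eqref{G2}, the membership $x\in G(p')$ is exactly $\by\in F(p',x)$. Since the receding hypothesis is assumed with parameters $\eta,\de,\mu'$ and we have $x\in B_\de(\bx)$, $p,p'\in B_\eta(\bp)$, $d(p,p')<\mu'$ and $\by\in F(p',x)$, inequality \eqref{D5.1-1} gives
\begin{align*}
d(\by,F(p,x))\le l\,d(p,p').
\end{align*}

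Next, multiplying $d(p,p')<\mu'=\al\mu/l$ through by $l$ yields $l\,d(p,p')<\al\mu$, hence $d(\by,F(p,x))<\al\mu$. This is precisely the activation condition of the uniform $\al$-subregularity of $F$ at $(\bp,\bx,\by)$, which, together with $p\in B_\eta(\bp)$ and $x\in B_\de(\bx)$, produces
\begin{align*}
\al\,d(x,F_p\iv(\by))\le d(\by,F(p,x)).
\end{align*}
Since $F_p\iv(\by)=\{u\in X\mid\by\in F(p,u)\}=G(p)$ by \eqref{G2}, combining the two displayed inequalities yields $\al\,d(x,G(p))\le l\,d(p,p')$, i.e. $d(x,G(p))\le (l/\al)\,d(p,p')$, which is the required Aubin estimate with rate $l/\al$ and parameters $\eta,\de,\mu'$.

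There is no genuine obstacle: the assertion is a direct composition of the two hypotheses. The only delicate point is bookkeeping of the auxiliary parameter $\mu$. The hypothesis on $F$ requires activation at scale $\al\mu$ for the residual $d(\by,F(p,x))$, whereas the receding property controls this residual by $l\,d(p,p')$; the choice $\mu'=\al\mu/l$ is exactly what synchronizes the two scales so that a step of size $d(p,p')<\mu'$ in $P$ feeds into a residual strictly less than $\al\mu$, triggering the subregularity inequality. No completeness, closedness, or dual-space assumption is needed for this deduction.
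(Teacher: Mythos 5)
Your proof is correct and follows exactly the same route as the paper's: use the receding property to bound the residual $d(\by,F(p,x))$ by $l\,d(p,p')<\al\mu$, observe that this triggers the activation condition of the uniform subregularity inequality, and chain the two estimates using $F_p\iv(\by)=G(p)$. The only difference is that you spell out the parameter bookkeeping for $\mu'$ more explicitly, which the paper leaves implicit.
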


\begin{proof}
Let $p,p'\in B_\eta(\bp)$ with $d(p,p')<\mu'$, and $x\in G(p')\cap B_\de(\bx)$.
By \eqref{G2} and \eqref{D5.1-1}, ${\by\in F(p',x)}$ and
${d(\by,F(p,x))}<\al\mu$.
Using successively \eqref{43} and \eqref{D5.1-1}, we obtain
\sloppy
\begin{align*}
d(x,G(p))\le\frac{1}{\al} d(\by,F(p,x))\le\frac{l}{\al}d(p,p').
\end{align*}
The proof is completed.
\end{proof}

Combining \cref{P5.7} with the sufficient conditions for the uniform subregularity formulated in the preceding sections, we can immediately obtain various sufficient conditions for the Aubin property of the implicit multifunction \eqref{G2}.
The next proposition collects three sufficient conditions arising from {\cref{C2.2}(ii)}, \cref{T2} and \cref{C3.3}, respectively.

\begin{proposition}\label{P5.8}
Let $P$ be a metric space, $X$ and $Y$ be complete metric spaces, $F:P\times X\rightrightarrows Y$ and $G:P\rightrightarrows X$ be given by \eqref{G2}.
Suppose that $\gph F_p$ is closed for all $p\in B_\eta(\bp)$.
The mapping $G$ has the Aubin property at $(\bp,\bx)$ with rate $l>0$, and $\eta$, $\de$ and $\mu$ if, for some $l'>0$, the mapping $F$ $l'-$recedes in $p$ uniformly in $x$ at $(\bp,\bx,\by)$ with $\eta$, $\de$ and $\mu$, and one of the following conditions holds true:
\begin{enumerate}
\item
there exists a $\ga>0$ such that
\begin{align*}
\limsup_{\substack{u\to x,\,v\to y,\,(u,v)\in\gph F_p,\,(u,v)\ne (x,y)\\
d(u,\bx)<\de+l\mu,\,d(v,\by)<l'\mu}}
{\dfrac{d(y,\by)-d(v,\by)}{d_\ga((u,v),(x,y))}}\ge\frac{l'}{l}
\end{align*}
for all $p$, $x$ and $y$ satisfying
\begin{align}\label{P5.8-3}
p\in B_\eta(\bp),\;x\in B_{{\de+\mu}}(\bx)\setminus F_p\iv(\by),\;
y\in F(p,x){\cap B_{l'\mu}(\by)};
\end{align}

\item
$X$ and $Y$ are {Banach}, and
there exists a $\ga>0$ such that, with $N:=N^C$,
\begin{align}\label{P5.8-4}
d_\ga((0,-y^*),N_{\gph F_p}(x,y))\ge\frac{l'}{l}
\end{align}
for all $p$, $x$ and $y$ satisfying \eqref{P5.8-3}, and all $y^*\in Y^*$ satisfying \eqref{T1-1};

\item
$X$ and $Y$ are {Asplund}, and
there exist a $\ga>0$ and a $\tau\in]0,1[$ such that condition \eqref{P5.8-4} is satisfied with $N:=N^F$
for all $p$, $x$ and $y$ satisfying \eqref{P5.8-3}, and all $y^*\in Y^*$ satisfying~\eqref{T1-2};

\item
$X$ and $Y$ are {Banach}, and
\begin{align}\label{P5.8-5}
d(0,D^*F_p(x,y)(B_{\eta}(y^*)))\ge\frac{l'}{l}
\end{align}
{with ${D^*:=D^*_C}$} for all $p$, $x$ and $y$ satisfying \eqref{P5.8-3}, and all $y^*\in Y^*$ satisfying \eqref{T1-1};

\item
$X$ and $Y$ are Asplund, and
{there exists}
a $\tau\in]0,1[$ such that condition \eqref{P5.8-5} is satisfied with $D^*:=D^*_F$
for all $p$, $x$ and $y$ satisfying \eqref{P5.8-3}, and all $y^*\in Y^*$ satisfying~\eqref{T1-2}.
\sloppy
\end{enumerate}
\end{proposition}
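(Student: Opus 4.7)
The plan is to derive \cref{P5.8} from \cref{P5.7} by invoking, for each of the five conditions (i)--(v), the matching sufficient condition for uniform subregularity established earlier in the paper. Set $\al := l'/l$. Applied with subregularity rate $\al$ and recede rate $l'$, \cref{P5.7} produces Aubin rate $l'/\al = l$, which is exactly the rate demanded in \cref{P5.8}. The consistency relation $\mu_{\mathrm{Aubin}} = \al\,\mu_{\mathrm{sub}}/l'$ from \cref{P5.7}, together with the requirement that the Aubin parameter equals $\mu$, forces the subregularity parameter to be $\tilde\mu := l\mu$. With this choice, the ball $B_{\al\tilde\mu}(\by) = B_{l'\mu}(\by)$ matches the bound on $y$ in \eqref{P5.8-3}, and the expanded ball $B_{\de+\tilde\mu}(\bx) = B_{\de+l\mu}(\bx)$ matches the ball appearing in the slope condition~(i).

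Given the $l'$-recede hypothesis of \cref{P5.8} with parameters $(\eta,\de,\mu)$, all that remains is to verify that each of (i)--(v) implies uniform $\al$-subregularity of $F$ at $(\bp,\bx,\by)$ with $(\eta,\de,\tilde\mu)$. This is a direct specialization in every case, with $P$ in the earlier statements replaced by $B_\eta(\bp)$: condition (i) is the hypothesis of \cref{C2.2}(ii) with rate $l'/l$; the normal-cone inequality \eqref{P5.8-4} under \eqref{T1-1} or \eqref{T1-2} is the hypothesis of \cref{T2}(i) or~(ii), respectively, with the same rate; and the coderivative inequality \eqref{P5.8-5} under the same constraints on $y^*$ is the hypothesis of \cref{C3.3}(i) or~(ii).

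Once uniform $\al$-subregularity has been established in each case, both assumptions of \cref{P5.7} are in force, and its conclusion gives the Aubin property of $G$ at $(\bp,\bx)$ with rate $l$ and parameters $(\eta,\de,\mu)$, completing the proof.

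The main difficulty is administrative rather than conceptual: one must carefully track how the scaling $\tilde\mu = l\mu$ propagates through each of the five instances so that the ball radii on $x$, on $y$, and on $y^*$ line up with what \cref{C2.2}, \cref{T2} and \cref{C3.3} demand. Once this bookkeeping is carried out, no new variational-analytic tool is required; each implication collapses to a direct invocation of a previously proved sufficient condition together with the single application of \cref{P5.7}.
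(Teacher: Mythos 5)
Your proof coincides with the paper's intended argument: the proposition is stated without a separate proof, the text noting only that it follows by combining \cref{P5.7} with the sufficient conditions from \cref{C2.2}(ii), \cref{T2} and \cref{C3.3}, which is precisely your reduction with $\al:=l'/l$ and subregularity parameter $\tilde\mu:=l\mu$. Your bookkeeping ($\al\tilde\mu=l'\mu$, $\de+\tilde\mu=\de+l\mu$) is correct, and it in fact shows that the radius $\de+\mu$ appearing in \eqref{P5.8-3} should read $\de+l\mu$ to be consistent with the constraint $d(u,\bx)<\de+l\mu$ in condition (i) and with what \cref{C2.2}(ii) requires.
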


\if{
\AK{14/06/20.
Should some comments to this proposition be added as suggested by one of the reviewers in their last remark?}
}\fi
\begin{remark}
Conditions (i) in \cref{P5.8} can be seen as a quantitative version of \cite[Theorem~3.9]{Iof17.1}, while conditions (iv) and (v) improve \cite[Theorem~4.1]{Iof17.1} and \cite[Theorem~7.26]{Iof17}.
Conditions (ii) and (iii) are new. Note that these two conditions are weaker than conditions (iv) and (v), respectively.
\end{remark}

\addcontentsline{toc}{section}{References}

\nocite{*}
\bibliographystyle{jnsao}
\bibliography{IM}

\begin{thebibliography}{10}

\bibitem{AdlCibNga15}
S{.\nobreak\kern 0.33333em}Adly, R{.\nobreak\kern 0.33333em}Cibulka, and
  H.\,V{.\nobreak\kern 0.33333em}Ngai, Newton's method for solving inclusions
  using set-valued approximations, \emph{SIAM J. Optim.} 25 (2015),  159--184,
  \href{https://dx.doi.org/10.1137/130926730}{\nolinkurl{doi:10.1137/130926730}}.

\bibitem{ApeDurStr13}
M{.\nobreak\kern 0.33333em}Apetrii, M{.\nobreak\kern 0.33333em}Durea, and
  R{.\nobreak\kern 0.33333em}Strugariu, On subregularity properties of
  set-valued mappings, \emph{Set-Valued Var. Anal.} 21 (2013),  93--126,
  \href{https://dx.doi.org/10.1007/s11228-012-0213-4}{\nolinkurl{doi:10.1007/s11228-012-0213-4}}.

\bibitem{AraDonGeoVel11}
F.\,J{.\nobreak\kern 0.33333em}Arag\'{o}n~Artacho, A.\,L{.\nobreak\kern
  0.33333em}Dontchev, M{.\nobreak\kern 0.33333em}Gaydu, M.\,H{.\nobreak\kern
  0.33333em}Geoffroy, and V.\,M{.\nobreak\kern 0.33333em}Veliov, Metric
  regularity of {N}ewton's iteration, \emph{SIAM J. Control Optim.} 49 (2011),
  339--362,
  \href{https://dx.doi.org/10.1137/100792585}{\nolinkurl{doi:10.1137/100792585}}.

\bibitem{AraDonGeo07}
F.\,J{.\nobreak\kern 0.33333em}Arag\'{o}n~Artacho, A.\,L{.\nobreak\kern
  0.33333em}Dontchev, and M.\,H{.\nobreak\kern 0.33333em}Geoffroy, Convergence
  of the proximal point method for metrically regular mappings, in
  \emph{C{SVAA} 2004---Control, Set-Valued Analysis and Applications},
  volume~17 of ESAIM Proc., EDP Sci., Les Ulis, 2007,  1--8,
  \href{https://dx.doi.org/10.1051/proc:071701}{\nolinkurl{doi:10.1051/proc:071701}}.

\bibitem{AspChaLuk16}
T{.\nobreak\kern 0.33333em}Aspelmeier, C{.\nobreak\kern 0.33333em}Charitha, and
  D.\,R{.\nobreak\kern 0.33333em}Luke, Local linear convergence of the
  {ADMM}/{D}ouglas-{R}achford algorithms without strong convexity and
  application to statistical imaging, \emph{SIAM J. Imaging Sci.} 9 (2016),
  842--868,
  \href{https://dx.doi.org/10.1137/15M103580X}{\nolinkurl{doi:10.1137/15m103580x}}.

\bibitem{AzeBen08}
D{.\nobreak\kern 0.33333em}Az{\'e} and S{.\nobreak\kern 0.33333em}Benahmed, On
  implicit multifunction theorems, \emph{Set-Valued Anal.} 16 (2008),
  129--155,
  \href{https://dx.doi.org/10.1007/s11228-008-0072-1}{\nolinkurl{doi:10.1007/s11228-008-0072-1}}.

\bibitem{AzeCorLuc02}
D{.\nobreak\kern 0.33333em}Az{\'e}, J.\,N{.\nobreak\kern 0.33333em}Corvellec,
  and R.\,E{.\nobreak\kern 0.33333em}Lucchetti, Variational pairs and
  applications to stability in nonsmooth analysis, \emph{Nonlinear Anal.} 49
  (2002),  643--670,
  \href{https://dx.doi.org/10.1016/S0362-546X(01)00129-8}{\nolinkurl{doi:10.1016/s0362-546x(01)00129-8}}.

\bibitem{Bor86}
J.\,M{.\nobreak\kern 0.33333em}Borwein, Stability and regular points of
  inequality systems, \emph{J. Optim. Theory Appl.} 48 (1986),  9--52.

\bibitem{BorZhu05}
J.\,M{.\nobreak\kern 0.33333em}Borwein and Q.\,J{.\nobreak\kern 0.33333em}Zhu,
  \emph{Techniques of Variational Analysis}, Springer, New York, 2005.

\bibitem{BorZhu88}
J.\,M{.\nobreak\kern 0.33333em}Borwein and D.\,M{.\nobreak\kern
  0.33333em}Zhuang, Verifiable necessary and sufficient conditions for openness
  and regularity of set-valued and single-valued maps, \emph{J. Math. Anal.
  Appl.} 134 (1988),  441--459,
  \href{https://dx.doi.org/10.1016/0022-247X(88)90034-0}{\nolinkurl{doi:10.1016/0022-247x(88)90034-0}}.

\bibitem{ChiYaoYen10}
N.\,H{.\nobreak\kern 0.33333em}Chieu, J.\,C{.\nobreak\kern 0.33333em}Yao, and
  N.\,D{.\nobreak\kern 0.33333em}Yen, Relationships between {R}obinson metric
  regularity and {L}ipschitz-like behavior of implicit multifunctions,
  \emph{Nonlinear Anal.} 72 (2010),  3594--3601,
  \href{https://dx.doi.org/10.1016/j.na.2009.12.039}{\nolinkurl{doi:10.1016/j.na.2009.12.039}}.

\bibitem{Chi10}
C.\,L{.\nobreak\kern 0.33333em}Chiriac, Convergence of the proximal point
  algorithm variational inequalities with regular mappings, \emph{An. Univ.
  Oradea Fasc. Mat.} 17 (2010),  65--69.

\bibitem{Chu15}
T.\,D{.\nobreak\kern 0.33333em}Chuong, Metric regularity of a positive order
  for generalized equations, \emph{Appl. Anal.} 94 (2015),  1270--1287,
  \href{https://dx.doi.org/10.1080/00036811.2014.930821}{\nolinkurl{doi:10.1080/00036811.2014.930821}}.

\bibitem{ChuKim16}
T.\,D{.\nobreak\kern 0.33333em}Chuong and D.\,S{.\nobreak\kern 0.33333em}Kim,
  H\"{o}lder-like property and metric regularity of a positive-order for
  implicit multifunctions, \emph{Math. Oper. Res.} 41 (2016),  596--611,
  \href{https://dx.doi.org/10.1287/moor.2015.0741}{\nolinkurl{doi:10.1287/moor.2015.0741}}.

\bibitem{ChuKruYao11}
T.\,D{.\nobreak\kern 0.33333em}Chuong, A.\,Y{.\nobreak\kern 0.33333em}Kruger,
  and J.\,C{.\nobreak\kern 0.33333em}Yao, Calmness of efficient solution maps
  in parametric vector optimization, \emph{J. Global Optim.} 51 (2011),
  677--688,
  \href{https://dx.doi.org/10.1007/s10898-011-9651-z}{\nolinkurl{doi:10.1007/s10898-011-9651-z}}.

\bibitem{CibPreRou19}
R{.\nobreak\kern 0.33333em}Cibulka, J{.\nobreak\kern 0.33333em}Preininger, and
  T{.\nobreak\kern 0.33333em}Roubal, On uniform regularity and strong
  regularity, \emph{Optimization} 68 (2019),  549--577,
  \href{https://dx.doi.org/10.1080/02331934.2018.1547383}{\nolinkurl{doi:10.1080/02331934.2018.1547383}}.

\bibitem{Cla83}
F.\,H{.\nobreak\kern 0.33333em}Clarke, \emph{Optimization and Nonsmooth
  Analysis}, John Wiley \& Sons Inc., New York, 1983.

\bibitem{CuoKru21}
N.\,D{.\nobreak\kern 0.33333em}Cuong and A.\,Y{.\nobreak\kern 0.33333em}Kruger,
  Primal Necessary Characterizations of Transversality Properties,
  \emph{Positivity} 25 (2021),  531--558,
  \href{https://dx.doi.org/10.1007/s11117-020-00775-5}{\nolinkurl{doi:10.1007/s11117-020-00775-5}}.

\bibitem{Don96}
A.\,L{.\nobreak\kern 0.33333em}Dontchev, The {G}raves theorem revisited,
  \emph{J. Convex Anal.} 3 (1996),  45--53.

\bibitem{DonLewRoc03}
A.\,L{.\nobreak\kern 0.33333em}Dontchev, A.\,S{.\nobreak\kern 0.33333em}Lewis,
  and R.\,T{.\nobreak\kern 0.33333em}Rockafellar, The radius of metric
  regularity, \emph{Trans. Amer. Math. Soc.} 355 (2003),  493--517.

\bibitem{DonQuiZla06}
A.\,L{.\nobreak\kern 0.33333em}Dontchev, M{.\nobreak\kern
  0.33333em}Quincampoix, and N{.\nobreak\kern 0.33333em}Zlateva, Aubin
  criterion for metric regularity, \emph{J. Convex Anal.} 13 (2006),  281--297.

\bibitem{DonRoc14}
A.\,L{.\nobreak\kern 0.33333em}Dontchev and R.\,T{.\nobreak\kern
  0.33333em}Rockafellar, \emph{Implicit Functions and Solution Mappings. A View
  from Variational Analysis}, Springer Series in Operations Research and
  Financial Engineering, Springer, New York, 2 edition, 2014,
  \href{https://dx.doi.org/10.1007/978-1-4939-1037-3}{\nolinkurl{doi:10.1007/978-1-4939-1037-3}}.

\bibitem{DonVel09}
A.\,L{.\nobreak\kern 0.33333em}Dontchev and V.\,M{.\nobreak\kern
  0.33333em}Veliov, Metric regularity under approximations, \emph{Control
  Cybernet.} 38 (2009),  1283--1303.

\bibitem{Fab89}
M{.\nobreak\kern 0.33333em}Fabian, Subdifferentiability and trustworthiness in
  the light of a new variational principle of {B}orwein and {P}reiss,
  \emph{Acta Univ. Carolinae} 30 (1989),  51--56.

\bibitem{GfrOut16.2}
H{.\nobreak\kern 0.33333em}Gfrerer and J.\,V{.\nobreak\kern 0.33333em}Outrata,
  On {L}ipschitzian properties of implicit multifunctions, \emph{SIAM J.
  Optim.} 26 (2016),  2160--2189,
  \href{https://dx.doi.org/10.1137/15M1052299}{\nolinkurl{doi:10.1137/15m1052299}}.

\bibitem{Gra50}
L.\,M{.\nobreak\kern 0.33333em}Graves, Some mapping theorems, \emph{Duke Math.
  J.} 17 (1950),  111--114.

\bibitem{HesLuk13}
R{.\nobreak\kern 0.33333em}Hesse and D.\,R{.\nobreak\kern 0.33333em}Luke,
  Nonconvex notions of regularity and convergence of fundamental algorithms for
  feasibility problems, \emph{SIAM J. Optim.} 23 (2013),  2397--2419,
  \href{https://dx.doi.org/10.1137/120902653}{\nolinkurl{doi:10.1137/120902653}}.

\bibitem{HuyKimNin12}
N.\,Q{.\nobreak\kern 0.33333em}Huy, D.\,S{.\nobreak\kern 0.33333em}Kim, and
  K.\,V{.\nobreak\kern 0.33333em}Ninh, Stability of implicit multifunctions in
  {B}anach spaces, \emph{J. Optim. Theory Appl.} 155 (2012),  558--571,
  \href{https://dx.doi.org/10.1007/s10957-012-0058-x}{\nolinkurl{doi:10.1007/s10957-012-0058-x}}.

\bibitem{HuyYao09}
N.\,Q{.\nobreak\kern 0.33333em}Huy and J.\,C{.\nobreak\kern 0.33333em}Yao,
  Stability of implicit multifunctions in {A}splund spaces, \emph{Taiwanese J.
  Math.} 13 (2009),  47--65,
  \href{https://dx.doi.org/10.11650/twjm/1500405272}{\nolinkurl{doi:10.11650/twjm/1500405272}}.

\bibitem{Iof00}
A.\,D{.\nobreak\kern 0.33333em}Ioffe, Metric regularity and subdifferential
  calculus, \emph{Russian Math. Surveys} 55 (2000),  501--558,
  \href{https://dx.doi.org/10.1070/rm2000v055n03ABEH000292}{\nolinkurl{doi:10.1070/rm2000v055n03abeh000292}}.

\bibitem{Iof16}
A.\,D{.\nobreak\kern 0.33333em}Ioffe, Metric regularity -- a survey. {P}art
  {I}. {T}heory, \emph{J. Aust. Math. Soc.} 101 (2016),  188--243,
  \href{https://dx.doi.org/10.1017/S1446788715000701}{\nolinkurl{doi:10.1017/s1446788715000701}}.

\bibitem{Iof17.1}
A.\,D{.\nobreak\kern 0.33333em}Ioffe, Implicit functions: a metric theory,
  \emph{Set-Valued Var. Anal.} 25 (2017),  679--699,
  \href{https://dx.doi.org/10.1007/s11228-017-0417-8}{\nolinkurl{doi:10.1007/s11228-017-0417-8}}.

\bibitem{Iof17}
A.\,D{.\nobreak\kern 0.33333em}Ioffe, \emph{Variational Analysis of Regular
  Mappings. Theory and Applications}, Springer Monographs in Mathematics,
  Springer, 2017,
  \href{https://dx.doi.org/10.1007/978-3-319-64277-2}{\nolinkurl{doi:10.1007/978-3-319-64277-2}}.

\bibitem{IofTik79}
A.\,D{.\nobreak\kern 0.33333em}Ioffe and V.\,M{.\nobreak\kern
  0.33333em}Tikhomirov, \emph{Theory of Extremal Problems}, volume~6 of Studies
  in Mathematics and its Applications, North-Holland Publishing Co., Amsterdam,
  1979.

\bibitem{KlaKum02}
D{.\nobreak\kern 0.33333em}Klatte and B{.\nobreak\kern 0.33333em}Kummer,
  \emph{Nonsmooth Equations in Optimization. Regularity, Calculus, Methods and
  Applications}, volume~60 of Nonconvex Optimization and its Applications,
  Kluwer Academic Publishers, Dordrecht, 2002.

\bibitem{Kru88}
A.\,Y{.\nobreak\kern 0.33333em}Kruger, A covering theorem for set-valued
  mappings, \emph{Optimization} 19 (1988),  763--780,
  \href{https://dx.doi.org/10.1080/02331938808843391}{\nolinkurl{doi:10.1080/02331938808843391}}.

\bibitem{Kru03}
A.\,Y{.\nobreak\kern 0.33333em}Kruger, On {F}r\'{e}chet subdifferentials,
  \emph{J. Math. Sci. (N.Y.)} 116 (2003),  3325--3358,
  \href{https://dx.doi.org/10.1023/A:1023673105317}{\nolinkurl{doi:10.1023/a:1023673105317}}.
\newblock Optimization and Related Topics, 3.

\bibitem{Kru15}
A.\,Y{.\nobreak\kern 0.33333em}Kruger, Error bounds and metric subregularity,
  \emph{Optimization} 64 (2015),  49--79,
  \href{https://dx.doi.org/10.1080/02331934.2014.938074}{\nolinkurl{doi:10.1080/02331934.2014.938074}}.

\bibitem{KruLukTha17}
A.\,Y{.\nobreak\kern 0.33333em}Kruger, D.\,R{.\nobreak\kern 0.33333em}Luke, and
  N.\,H{.\nobreak\kern 0.33333em}Thao, About subtransversality of collections
  of sets, \emph{Set-Valued Var. Anal.} 25 (2017),  701--729,
  \href{https://dx.doi.org/10.1007/s11228-017-0436-5}{\nolinkurl{doi:10.1007/s11228-017-0436-5}}.

\bibitem{LedZhu99}
Y.\,S{.\nobreak\kern 0.33333em}Ledyaev and Q.\,J{.\nobreak\kern 0.33333em}Zhu,
  Implicit multifunction theorems, \emph{Set-Valued Anal.} 7 (1999),  209--238,
  \href{https://dx.doi.org/10.1023/A:1008775413250}{\nolinkurl{doi:10.1023/a:1008775413250}}.

\bibitem{LeeTamYen08}
G.\,M{.\nobreak\kern 0.33333em}Lee, N.\,N{.\nobreak\kern 0.33333em}Tam, and
  N.\,D{.\nobreak\kern 0.33333em}Yen, Normal coderivative for multifunctions
  and implicit function theorems, \emph{J. Math. Anal. Appl.} 338 (2008),
  11--22,
  \href{https://dx.doi.org/10.1016/j.jmaa.2007.05.001}{\nolinkurl{doi:10.1016/j.jmaa.2007.05.001}}.

\bibitem{LiMor12}
G{.\nobreak\kern 0.33333em}Li and B.\,S{.\nobreak\kern 0.33333em}Mordukhovich,
  H\"older metric subregularity with applications to proximal point method,
  \emph{SIAM J. Optim.} 22 (2012),  1655--1684,
  \href{https://dx.doi.org/10.1137/120864660}{\nolinkurl{doi:10.1137/120864660}}.

\bibitem{LukThaTam18}
D.\,R{.\nobreak\kern 0.33333em}Luke, N.\,H{.\nobreak\kern 0.33333em}Thao, and
  M.\,K{.\nobreak\kern 0.33333em}Tam, Quantitative convergence analysis of
  iterated expansive, set-valued mappings, \emph{Math. Oper. Res.} 43 (2018),
  1143--1176,
  \href{https://dx.doi.org/10.1287/moor.2017.0898}{\nolinkurl{doi:10.1287/moor.2017.0898}}.

\bibitem{Lyu34}
L.\,A{.\nobreak\kern 0.33333em}Lyusternik, On the conditional extrema of
  functionals, \emph{Mat. Sbornik} 41 (1934),  390--401.

\bibitem{Mor06.1}
B.\,S{.\nobreak\kern 0.33333em}Mordukhovich, \emph{Variational Analysis and
  Generalized Differentiation. {I}: {B}asic {T}heory}, volume 330 of
  Grundlehren der Mathematischen Wissenschaften [Fundamental Principles of
  Mathematical Sciences], Springer, Berlin, 2006.

\bibitem{Mor06.2}
B.\,S{.\nobreak\kern 0.33333em}Mordukhovich, \emph{Variational Analysis and
  Generalized Differentiation. {II}: {{A}pplications}}, volume 331 of
  Grundlehren der Mathematischen Wissenschaften [Fundamental Principles of
  Mathematical Sciences], Springer, Berlin, 2006.

\bibitem{NgaThe04}
H.\,V{.\nobreak\kern 0.33333em}Ngai and M{.\nobreak\kern 0.33333em}Th{\'e}ra,
  Error bounds and implicit multifunction theorem in smooth {B}anach spaces and
  applications to optimization, \emph{Set-Valued Anal.} 12 (2004),  195--223,
  \href{https://dx.doi.org/10.1023/B:SVAN.0000023396.58424.98}{\nolinkurl{doi:10.1023/b:svan.0000023396.58424.98}}.

\bibitem{NgaThe08}
H.\,V{.\nobreak\kern 0.33333em}Ngai and M{.\nobreak\kern 0.33333em}Th{\'e}ra,
  Error bounds in metric spaces and application to the perturbation stability
  of metric regularity, \emph{SIAM J. Optim.} 19 (2008),  1--20,
  \href{https://dx.doi.org/10.1137/060675721}{\nolinkurl{doi:10.1137/060675721}}.

\bibitem{NgaTroThe13}
H.\,V{.\nobreak\kern 0.33333em}Ngai, N.\,H{.\nobreak\kern 0.33333em}Tron, and
  M{.\nobreak\kern 0.33333em}Th{\'e}ra, Implicit multifunction theorems in
  complete metric spaces, \emph{Math. Program., Ser. B} 139 (2013),  301--326,
  \href{https://dx.doi.org/10.1007/s10107-013-0673-9}{\nolinkurl{doi:10.1007/s10107-013-0673-9}}.

\bibitem{Ngh14}
T.\,T.\,A{.\nobreak\kern 0.33333em}Nghia, A note on implicit multifunction
  theorems, \emph{Optim. Lett.} 8 (2014),  329--341,
  \href{https://dx.doi.org/10.1007/s11590-012-0580-7}{\nolinkurl{doi:10.1007/s11590-012-0580-7}}.

\bibitem{Phe93}
R.\,R{.\nobreak\kern 0.33333em}Phelps, \emph{Convex Functions, Monotone
  Operators and Differentiability}, volume 1364 of Lecture Notes in
  Mathematics, Springer-Verlag, Berlin, 2nd edition, 1993.

\bibitem{Rob75.2}
S.\,M{.\nobreak\kern 0.33333em}Robinson, Stability theory for systems of
  inequalities. {I}. {L}inear systems, \emph{SIAM J. Numer. Anal.} 12 (1975),
  754--769,
  \href{https://dx.doi.org/10.1137/0712056}{\nolinkurl{doi:10.1137/0712056}}.

\bibitem{Rob76}
S.\,M{.\nobreak\kern 0.33333em}Robinson, Regularity and stability for convex
  multivalued functions, \emph{Math. Oper. Res.} 1 (1976),  130--143.

\bibitem{Rob76.2}
S.\,M{.\nobreak\kern 0.33333em}Robinson, Stability theory for systems of
  inequalities. {II}. {D}ifferentiable nonlinear systems, \emph{SIAM J. Numer.
  Anal.} 13 (1976),  497--513.

\bibitem{Roc79}
R.\,T{.\nobreak\kern 0.33333em}Rockafellar, Directionally {L}ipschitzian
  functions and subdifferential calculus, \emph{Proc. London Math. Soc. (3)} 39
  (1979),  331--355.

\bibitem{Roc80}
R.\,T{.\nobreak\kern 0.33333em}Rockafellar, Generalized directional derivatives
  and subgradients of nonconvex functions, \emph{Canad. J. Math.} 32 (1980),
  257--280,
  \href{https://dx.doi.org/10.4153/CJM-1980-020-7}{\nolinkurl{doi:10.4153/cjm-1980-020-7}}.

\bibitem{RocWet98}
R.\,T{.\nobreak\kern 0.33333em}Rockafellar and R.\,J.\,B{.\nobreak\kern
  0.33333em}Wets, \emph{Variational Analysis}, Springer, Berlin, 1998.

\bibitem{Ude21}
A{.\nobreak\kern 0.33333em}Uderzo, On the quantitative solution stability of
  parameterized set-valued inclusions, \emph{Set-Valued Var. Anal} 29 (2021),
  425--451,
  \href{https://dx.doi.org/10.1007/s11228-020-00571-z}{\nolinkurl{doi:10.1007/s11228-020-00571-z}}.

\bibitem{Urs75}
C{.\nobreak\kern 0.33333em}Ursescu, Multifunctions with convex closed graph,
  \emph{Czechoslovak Math. J.} 25(100) (1975),  438--441.

\bibitem{YenYao09}
N.\,D{.\nobreak\kern 0.33333em}Yen and J.\,C{.\nobreak\kern 0.33333em}Yao,
  Point-based sufficient conditions for metric regularity of implicit
  multifunctions, \emph{Nonlinear Anal.} 70 (2009),  2806--2815,
  \href{https://dx.doi.org/10.1016/j.na.2008.04.005}{\nolinkurl{doi:10.1016/j.na.2008.04.005}}.

\bibitem{YenYaoKie08}
N.\,D{.\nobreak\kern 0.33333em}Yen, J.\,C{.\nobreak\kern 0.33333em}Yao, and
  B.\,T{.\nobreak\kern 0.33333em}Kien, Covering properties at positive-order
  rates of multifunctions and some related topics, \emph{J. Math. Anal. Appl.}
  338 (2008),  467--478,
  \href{https://dx.doi.org/10.1016/j.jmaa.2007.05.041}{\nolinkurl{doi:10.1016/j.jmaa.2007.05.041}}.

\bibitem{Zal02}
C{.\nobreak\kern 0.33333em}Z{\u{a}}linescu, \emph{Convex Analysis in General
  Vector Spaces}, World Scientific Publishing Co. Inc., River Edge, NJ, 2002,
  \href{https://dx.doi.org/10.1142/9789812777096}{\nolinkurl{doi:10.1142/9789812777096}}.

\bibitem{ZheNg10}
X.\,Y{.\nobreak\kern 0.33333em}Zheng and K.\,F{.\nobreak\kern 0.33333em}Ng,
  Metric subregularity and calmness for nonconvex generalized equations in
  {B}anach spaces, \emph{SIAM J. Optim.} 20 (2010),  2119--2136,
  \href{https://dx.doi.org/10.1137/090772174}{\nolinkurl{doi:10.1137/090772174}}.

\end{thebibliography}

\end{document}